\documentclass[11pt]{article}
\usepackage{placeins}
\usepackage{amsmath,amssymb,amsthm,mathtools}
\usepackage[margin=2.7cm]{geometry}
\usepackage{booktabs}
\usepackage{graphicx}
\usepackage{enumitem}
\usepackage{algorithm,algpseudocode}
\usepackage{hyperref}

\theoremstyle{plain}
\newtheorem{theorem}{Theorem}[section]
\newtheorem{proposition}[theorem]{Proposition}
\newtheorem{lemma}[theorem]{Lemma}
\newtheorem{corollary}[theorem]{Corollary}
\newtheorem*{mainthm}{Main Theorem}
\theoremstyle{definition}
\newtheorem{definition}[theorem]{Definition}

\newtheorem{remark}[theorem]{Remark}

\newcommand{\C}{\mathbb{C}}
\newcommand{\R}{\mathbb{R}}
\newcommand{\spec}{\operatorname{spec}}
\newcommand{\tr}{\operatorname{tr}}
\newcommand{\ran}{\operatorname{ran}}
\newcommand{\rank}{\operatorname{rank}}
\newcommand{\fl}{\operatorname{fl}}
\newcommand{\sig}{\sigma}
\newcommand{\sv}{s}
\newcommand{\sigmin}{\sigma_{\min}}

\DeclarePairedDelimiter{\norm}{\lVert}{\rVert}
\DeclarePairedDelimiter{\abs}{\lvert}{\rvert}
\newcommand{\svdbox}{\texttt{svdbox}}
\newcommand{\tnorm}[1]{\ensuremath{|\!|\!|#1|\!|\!|}}
\newcommand{\Qs}{\mathcal{Q}}

\title{The reverse Ritz inclusion: computable bounds on the pseudospectra of an operator by
those of a finite-rank compression}
\author{Isaia Nisoli\thanks{Instituto de Matem\'atica, Universidade Federal do Rio de Janeiro,
Av.\ Athos da Silveira Ramos 149, Centro de Tecnologia, Bloco C, Cidade Universit\'aria, Rio de
Janeiro, Brazil; Department of Mathematics, Hokkaido University, N10 W8, Kita-ku, Sapporo 001-0010,
Japan; RIES, Hokkaido University, N20 W10, Kita-ku, Sapporo 001-0020, Japan.
\texttt{nisoli@im.ufrj.br}; ORCID \href{https://orcid.org/0000-0003-2474-594X}{0000-0003-2474-594X}.}}
\date{}

\begin{document}
\maketitle

\begin{abstract}
We consider a bounded operator on a separable complex Hilbert space, reached only through its action
on vectors and that of its adjoint, and split it along a subspace of finite dimension into a
compression, a far block acting on the complement, and the two couplings between them. Given upper
bounds on the norms of the couplings and of the far block, we show that outside a disc a
pseudospectrum of the operator lies in a pseudospectrum of the compression, with an explicit loss in
the level; this reverses the classical inclusion of the Ritz values in a pseudospectrum of the
operator. The same bounds, with the smallest singular value of the shifted compression, bound the
resolvent of the operator, count its eigenvalues with algebraic multiplicity inside every contour
outside the disc on which a computed inequality holds, and enclose its spectral projector; no normality,
self-adjointness or positivity is used.
\end{abstract}

%=======================================================================
\section{Introduction}\label{sec:intro}
%=======================================================================

A large operator reached only through its application to vectors is usually reduced to a compression: a
handful of eigenvectors or Krylov vectors, assembled into a small matrix whose eigenvalues can be
computed. What the compression omits is not under the analyst's control, and for a non-normal operator
an eigenvalue of the compression need not sit near any eigenvalue of the operator itself, the gap
being invisible to the compression alone. We build the kept subspace as a frame joining
eigendirections to singular vectors; a bound on what the compression omits, together with the two
couplings between the kept subspace and the rest, then encloses the operator's eigenvalues outside a
disc, their count and the invariant subspace they span, through those of the compression.

Most of the known results relating the two spectra go from the compression to the operator. For a
self-adjoint operator, the distance from an eigenvalue of a compression to the spectrum of the
operator is at most the norm of the coupling from the kept subspace to its
complement~\cite[Lemma~1]{DaviesPlum2004}. For a general operator $A$, the Ritz values computed by $k$
steps of the Arnoldi process lie in the $\varepsilon$-pseudospectrum $\Lambda_\varepsilon(A)$, defined
in~\eqref{eq:pseudo} below, for every $\varepsilon>\abs{h_{k+1,k}}$, the last coefficient of the
recurrence~\cite[Thm.~26.1]{TrefethenEmbree2005}, and beyond this and their containment in the
numerical range little is known about the Ritz values of a non-Hermitian matrix~\cite{CardenEmbree2012};
for a non-normal operator $\Lambda_\varepsilon(A)$ can extend far from the spectrum, so this says
nothing about how far a Ritz value sits from an eigenvalue, and nothing is counted. Bounds in the
other direction are written through the distance from the kept subspace to an unknown eigenvector or
invariant subspace~\cite[Thm.~4.3]{Saad2011}, \cite[Thm.~4.1]{JiaStewart2000}, or through the
condition number of an unknown matrix of eigenvectors~\cite{BauerFike1960}, \cite[Thm.~3.6]{Saad2011};
for a non-normal matrix the residual norms of approximate eigenvectors do not by themselves bound the
error in the eigenvalues~\cite{KahanEtAl1982}. The other direction does hold at a fixed compression, for a
closed operator on a Hilbert space as for a matrix, when the product of the two couplings is small
against the square of the separation between the compression and the part of the operator acting on
the complement of the kept subspace: the operator then has an invariant subspace near the kept one,
on which it acts as the compression plus a perturbation bounded through that separation, and its
spectrum splits into the $k$ eigenvalues of that $k\times k$ matrix and the
rest~\cite[Thm.~3.5]{Stewart1971}, \cite[Thms.~4.1 and~4.12]{Stewart1973},
\cite[Thm.~V.2.7]{StewartSun1990}. The separation is the reciprocal of the
norm of the inverse of a Sylvester map built from the compression and the part acting on the
complement, and so involves the whole of that part; Stewart notes that this inverse will generally be difficult to
evaluate~\cite[\S3]{Stewart1973}. In the Rayleigh--Ritz setting this hypothesis is not automatically satisfied, and the
separation can be much smaller than the distance between the two spectra, so that it has to be
assumed~\cite[\S\S2 and~5]{JiaStewart2000}. For a sequence of approximations of a non-self-adjoint operator converging in a suitable
sense, each eigenvalue of algebraic multiplicity $m$ is matched by exactly $m$ eigenvalues of the
approximation~\cite[Thm.~2.6]{Boegli2017}, and each point of $\Lambda_\varepsilon$ of the operator
lies in $\Lambda_\varepsilon$ of the approximation~\cite[Thm.~3.3]{Boegli2018}, both from some index
on that the theorems do not compute.

The conclusions of our result are of the same kind, the eigenvalues outside a disc located through the compression and
counted, and the invariant subspace they span enclosed through its spectral projector, while its
hypothesis is an inequality verified on a contour, in which the part acting on the complement enters
only through an upper bound on its norm, computed in dimension $n$ from $mn$ applications of the operator at
order $m$ of the trace bound, where no deterministic method making fewer than $n-k$ applications
returns a certified upper bound. The separation involves every eigenvalue of the compression, including those inside the
disc, where they may lie close to the spectrum of the part acting on the complement; in our hypothesis
the eigenvalues of the compression inside the disc enter only through $\sigmin$ of the shifted
compression on the contour, so they are allowed anywhere inside it. Our reduction needs this: the frame
of Definition~\ref{def:Q} joins the eigendirections outside the disc to $k_s$ left and $k_s$ right
singular vectors, with $k_s$ chosen to bring $\norm{G_{22}}$ below the radius and not tied to the
number of eigenvalues sought, so the compression has up to $2k_s$ further eigenvalues, which take no
part in the statements as long as they stay inside the disc.

Let $\mathcal H$ be a separable complex Hilbert space, $A$ a bounded operator on it and $Q:\C^k\to\mathcal
H$ an isometry with range $\Qs$. Completing $Q$ to an orthonormal basis $[\,Q\mid Q_\perp\,]$ writes
\begin{equation}\label{eq:introblocks}
  A\;\cong\;\begin{pmatrix} G_{11} & G_{12}\\ G_{21} & G_{22}\end{pmatrix},
  \qquad
  \begin{aligned}
    G_{11}&=Q^{*}AQ, & G_{12}&=Q^{*}AQ_\perp,\\
    G_{21}&=Q_\perp^{*}AQ, & G_{22}&=Q_\perp^{*}AQ_\perp,
  \end{aligned}
\end{equation}
a unitary similarity. With $P_\perp=I-QQ^{*}$ one has $\norm{G_{12}}=\norm{Q^{*}AP_\perp}$,
$\norm{G_{21}}=\norm{P_\perp AQ}$ and $\norm{G_{22}}=\norm{P_\perp AP_\perp}$, so the three norms do
not depend on the choice of $Q_\perp$. We write $\norm\cdot$ for the norm of a vector and the operator
norm it induces, $\kappa(W)=\norm W\norm{W^{-1}}$, and
\begin{equation}\label{eq:jintro}
  j(T)=\inf_{\norm x=1}\norm{Tx},
\end{equation}
the injection modulus, equal to $\sigmin(T)$ in finite dimension and to $1/\norm{T^{-1}}$ whenever $T$
is invertible. The $\varepsilon$-pseudospectrum is
\begin{equation}\label{eq:pseudo}
  \Lambda_\varepsilon(M)=\{z\in\C:\norm{(zI-M)^{-1}}\ge1/\varepsilon\},
\end{equation}
with $\norm{(zI-M)^{-1}}=\infty$ on the spectrum, so $\spec(M)\subseteq\Lambda_\varepsilon(M)$ for
every $\varepsilon>0$~\cite{TrefethenEmbree2005}. In finite dimension
$\Lambda_\varepsilon(M)=\{z:j(M-zI)\le\varepsilon\}$; in general the set on the right can be the
smaller of the two, for the reason given in Section~\ref{sec:bound}.

\subsection{The certificate}\label{sec:intromain}

\begin{mainthm}
Let $A$ be a bounded operator on a separable complex Hilbert space $\mathcal H$, let $Q:\C^k\to\mathcal H$
be an isometry, let the blocks be as in~\eqref{eq:introblocks}, and let $\bar
g,\gamma_1,\gamma_2$ satisfy
\[
  \norm{G_{22}}\le\bar g,\qquad \norm{G_{12}}\le\gamma_1,\qquad \norm{G_{21}}\le\gamma_2 .
\]
Write $\sig_{11}(z)=\sigmin(G_{11}-zI)$. Then at every $z$ with $\abs{z}>\bar g$ and
$\sig_{11}(z)(\abs{z}-\bar g)>\gamma_1\gamma_2$,
\begin{equation}\label{eq:introres}
  \norm{(A-zI)^{-1}}\;\le\;
  \frac{\sig_{11}(z)+(\abs{z}-\bar g)+\gamma_1+\gamma_2}
       {\sig_{11}(z)\,(\abs{z}-\bar g)-\gamma_1\gamma_2}\,;
\end{equation}
for every $R>\bar g$ and $\varepsilon>0$ with $\varepsilon(R-\bar g)>\gamma_1\gamma_2$,
\begin{equation}\label{eq:introincl}
  \Lambda_{\delta(\varepsilon)}(A)\cap\{\abs{z}>R\}\;\subseteq\;
  \Lambda_{\varepsilon}(G_{11})\cap\{\abs{z}>R\},\qquad
  \delta(\varepsilon)=\frac{\varepsilon(R-\bar g)-\gamma_1\gamma_2}
                           {\varepsilon+(R-\bar g)+\gamma_1+\gamma_2}\,;
\end{equation}
and if $\sig_{11}(z)(\abs{z}-\bar g)>\gamma_1\gamma_2$ at every $z$ of the circle $\abs{z}=R$, then
$A$ and the $k\times k$ matrix $G_{11}$ have the same number of eigenvalues, counted with algebraic
multiplicity and finite in number, in $\{\abs{z}>R\}$.
\end{mainthm}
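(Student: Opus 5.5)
The plan is a Schur complement argument on the block form~\eqref{eq:introblocks}, followed by a homotopy argument for the count. Fix $z$ with $\abs{z}>\bar g$ and $\sig_{11}(z)(\abs{z}-\bar g)>\gamma_1\gamma_2$. Since $\norm{G_{22}}\le\bar g<\abs z$, the Neumann series makes $G_{22}-zI$ invertible, with $\norm{(G_{22}-zI)^{-1}}\le 1/(\abs z-\bar g)$. The Schur complement $S(z)=G_{11}-zI-G_{12}(G_{22}-zI)^{-1}G_{21}$ is a $k\times k$ matrix. It satisfies $\sigmin(S(z))\ge \sig_{11}(z)-\gamma_1\gamma_2/(\abs z-\bar g)>0$, so it is invertible with
\[
\norm{S^{-1}}\le \frac{\abs z-\bar g}{\sig_{11}(\abs z-\bar g)-\gamma_1\gamma_2}.
\]
The block inverse formula then gives $(A-zI)^{-1}$, unitarily similar to
\[
\begin{pmatrix} S^{-1} & -S^{-1}G_{12}D^{-1}\\ -D^{-1}G_{21}S^{-1} & D^{-1}+D^{-1}G_{21}S^{-1}G_{12}D^{-1}\end{pmatrix},
\qquad D=G_{22}-zI .
\]

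Write $s=\sig_{11}$, $d=\abs z-\bar g$ and $\Delta=sd-\gamma_1\gamma_2$. I bound the norm of the block matrix by the sum of the norms of its blocks, grouped to match the target numerator $s+d+\gamma_1+\gamma_2$. The four blocks give $d/\Delta$, $\gamma_1/\Delta$, $\gamma_2/\Delta$, and
\[
\frac1d+\frac{\gamma_1\gamma_2}{d\Delta}=\frac{\Delta+\gamma_1\gamma_2}{d\Delta}=\frac{s}{\Delta}.
\]
Their sum is exactly $(s+d+\gamma_1+\gamma_2)/\Delta$, which is~\eqref{eq:introres}. A pleasant check is that the $(2,2)$ block simplifies exactly.

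For~\eqref{eq:introincl}, I argue by contraposition. Let $\abs z>R$ with $z\notin\Lambda_\varepsilon(G_{11})$, so $s>\varepsilon$. Then $d>R-\bar g$, and $sd>\varepsilon(R-\bar g)>\gamma_1\gamma_2$, so~\eqref{eq:introres} applies. I will show that the right side of~\eqref{eq:introres} is decreasing in each of $s$ and $d$ on the region $sd>\gamma_1\gamma_2$. The derivative in $s$ of $(s+c)/(sd-\gamma_1\gamma_2)$, with $c=d+\gamma_1+\gamma_2$, has numerator $-\gamma_1\gamma_2-cd<0$, and the case of $d$ is symmetric. Substituting $s\ge\varepsilon$ and $d\ge R-\bar g$ then gives $\norm{(A-zI)^{-1}}<1/\delta(\varepsilon)$. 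The inequality is strict because $s>\varepsilon$ and $d>R-\bar g$ strictly. Hence $z\notin\Lambda_{\delta}(A)$.

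For the count, I use the homotopy $A_t$, which has the same blocks except that $G_{12},G_{21}$ are replaced by $tG_{12},tG_{21}$, for $t\in[0,1]$. The hypothesis holds on $\abs z=R$ for all $t$, since the coupling product only decreases. It also holds for all $\abs z\ge R$, because $\sig_{11}$ is $1$-Lipschitz in $z$, so along rays $s(z)d(z)$ cannot decrease. This last point is the step to verify carefully. If the Lipschitz argument fails, I would instead take the region to be $\{\abs z>R\}$ only through the contour, together with the fact that for $\abs z>\norm{A_t}$ everything is invertible. The first part then shows that $A_t-zI$ is invertible on the circle with uniformly bounded inverse. The Riesz projector $P_t=\frac{1}{2\pi i}\oint_{\abs z=R}\ldots$, applied to the exterior, which is taken via the complement of the projector for the interior and is legitimate since $\spec(A_t)$ is bounded, depends continuously on $t$ in norm.

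The rank of $P_t$ is therefore constant. At $t=0$ the operator $A_0=G_{11}\oplus G_{22}$ has spectrum outside the disc only from $G_{11}$, since $\spec(G_{22})\subset\{\abs z\le\bar g\}$. So $\rank P_0$ equals the number of eigenvalues of $G_{11}$ in $\{\abs z>R\}$ with algebraic multiplicity, which is at most $k$. Finite rank of $P_1$ then gives that the spectrum of $A$ outside the circle consists of finitely many eigenvalues whose algebraic multiplicities sum to $\rank P_1$. The main obstacle is this continuity and rank argument in infinite dimension, together with confirming that the exterior region is free of spectrum of the Schur complement away from eigenvalues. I would handle it with the standard lemma that projectors at distance $<1$ have equal rank.
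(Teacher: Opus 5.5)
Your argument is correct once one sentence is removed (see the second paragraph), and for the resolvent bound it takes a different route from the paper's.

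The paper never writes the inverse down. It bounds the injection modulus $j(A-zI)$ from below: it splits a unit vector as $Qx_1+x_2$ and uses the orthogonality of the two components of $(A-zI)x$ to get $\sig_{11}a-\gamma_1 b\le t$ and $\sig_{22}b-\gamma_2a\le t$. It then eliminates $a$ and $b$ in turn and uses $a+b\ge1$. On an infinite-dimensional space a positive injection modulus gives only injectivity with closed range, so the paper repeats the estimate for the adjoint to get dense range. Finally it inserts $\sig_{22}(z)\ge\abs z-\bar g$ through the monotonicity of the bound (Theorem~\ref{thm:assembly} with \eqref{eq:farshort}).

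Your Schur complement factorisation exhibits a bounded two-sided inverse, so invertibility is immediate and no adjoint step is needed. Summing the four block norms reproduces exactly the paper's constant; the collapse of the $(2,2)$ entry to $s/\Delta$ is what makes it match. It also gives a little more than you use: bounding the inverse by the spectral norm of the matrix of block norms, $\bigl\|\bigl(\begin{smallmatrix} d&\gamma_1\\ \gamma_2&s\end{smallmatrix}\bigr)\bigr\|/\Delta$, rather than by the sum of its entries is sharper. For instance, it gives $1/\min(s,d)$ against $1/s+1/d$ when the couplings vanish. The paper's route, for its part, is symmetric in the two blocks and works only with lower bounds on $\norm{(G_{ii}-zI)x_i}$ and their adjoint counterparts, with no inverse formed. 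Yours needs $G_{22}-zI$ to be genuinely invertible, which here the Neumann series supplies.

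Your inclusion argument, by contraposition with the derivative numerator $-\gamma_1\gamma_2-cd<0$, is the paper's Proposition~\ref{prop:complete}. Your fallback for the count is the paper's Proposition~\ref{prop:count}: the homotopy $A_t$, a uniform resolvent bound on the circle, the exterior Riesz projector as $I$ minus the interior one, constancy of the rank, and the rank at $t=0$ read off $G_{11}\oplus G_{22}$.

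The sentence you flagged, that the hypothesis holds on all of $\{\abs z\ge R\}$, is false, not merely unchecked. The $1$-Lipschitz property lets $\sig_{11}$ fall at unit rate along a ray while $\abs z-\bar g$ rises at unit rate, so the product can fall. Worse, at any eigenvalue $\mu$ of $G_{11}$ with $\abs\mu>R$ one has $\sig_{11}(\mu)=0$, so the hypothesis fails there. Were the claim true, neither $A$ nor $G_{11}$ would have an eigenvalue outside the circle, and the count would be the vacuous $0=0$.

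The contour argument does not use this claim. The Lipschitz property does enter, however, in the uniformity you assert without proof. The quantity $\sig_{11}(z)(\abs z-\bar g)-\gamma_1\gamma_2$ is continuous and positive on the compact circle, hence has a positive minimum. This uniform bound on $(zI-A_t)^{-1}$, through $(zI-A_t)^{-1}-(zI-A_{t'})^{-1}=(zI-A_t)^{-1}(A_t-A_{t'})(zI-A_{t'})^{-1}$, is what makes $P_t$ norm-continuous in $t$. Your closing worry about the spectrum of the Schur complement in the exterior plays no role in this argument and can be dropped.
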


In particular an eigenvalue $\lambda$ of $A$ with $\abs\lambda>\bar g$ satisfies
$\sig_{11}(\lambda)(\abs\lambda-\bar g)\le\gamma_1\gamma_2$, since otherwise \eqref{eq:introres}
would make $A-\lambda I$ invertible: $\lambda$ lies in $\Lambda_\varepsilon(G_{11})$ for
$\varepsilon=\gamma_1\gamma_2/(\abs\lambda-\bar g)$ when this is positive, and is an eigenvalue of
$G_{11}$ when $\gamma_1\gamma_2=0$.

The Main Theorem is proved in Section~\ref{sec:bound}, which also says how far it reaches.

The Main Theorem holds for every frame $Q$, so it does not say which one to use. What we want is the
resolvent of $A$ outside some radius $R$, together with the eigenvalues there and their count, and
the frame we build for that joins eigendirections with singular directions: the Arnoldi eigenvectors with
$\abs\lambda>R$, approximate eigenvectors whose span is close to an invariant subspace when the
Arnoldi process has converged, and the leading left and right singular vectors of $A$, aimed at
bringing $\norm{G_{22}}$ below $R$ (Proposition~\ref{prop:frame}; Section~\ref{sec:frame}).
Building the frame this way is a search, uncertified; what is certified is the hypothesis, checked
afterwards on the frame the search returns.

\subsection{The inputs}\label{sec:introinputs}

The four quantities the Main Theorem uses are of three kinds.

The compression $G_{11}$ is $k\times k$ regardless of the
dimension of $\mathcal H$, so one verified singular value decomposition per $z$ encloses $\sig_{11}(z)$
(Section~\ref{sec:dense}, Section~\ref{sec:vbd}). The couplings have rank at most $k$ but map into
$\mathcal H\ominus\Qs$, which need not be finite-dimensional, so no factorisation of the coupling
itself is available in general; $\gamma_1,\gamma_2$ come instead from two Gram matrices of size $k$,
formed once per operator at $2k$ applications of the operator (Section~\ref{sec:coupling}).

The bound $\bar g\ge\norm{G_{22}}$ is the only quantity
that does not live on an object of low rank. In finite dimension we compute it by a sweep that
applies the operator $m$ times per column at order $m$, runs in floating point, and is certified afterwards by two
scalar inequalities (Section~\ref{sec:norm}).

On an infinite-dimensional space no finite number of applications certifies $\bar g$ (the argument of
Proposition~\ref{prop:floor} applies to any finite number of applications), and it has to be assumed.

A certified number is an outward-rounded ball enclosure, in the sense of validated
numerics~\cite{Tucker2011}: a floating-point phase produces a candidate
and a second phase in ball arithmetic bounds its residual, as in~\cite{Rump2022a,Miyajima2014a} and
with the ball arithmetic of BallArithmetic.jl~\cite{BallArithmetic2023}. Where a bound with explicit
constants is available we use that instead, as for the rounding of the sweep of
Section~\ref{sec:fpsweep}.

\subsection{Origin and reach}\label{sec:origin}

The certificate uses no normality, self-adjointness or positivity, and holds for any bounded operator
on a separable complex Hilbert space, reached only through $x\mapsto Ax$ and $x\mapsto A^{*}x$; cost is counted in applications of $A$
and of $A^{*}$, a stored matrix being the case in which each application is a multiplication. For a
self-adjoint operator it specialises to an enclosure of the extreme eigenvalues with a radius
quadratic in the coupling, and to their count (Corollary~\ref{cor:selfadj}).

The question that started this is narrower: the discretisations of Markov and transfer operators, too
large to store, of which one wants the leading eigenvalues, the mixing rate and the peripheral
spectrum. Certifying the isolated eigenvalues of such an operator from a finite-dimensional
discretisation, with a computable approximation bound, is carried out
in~\cite{BlumenthalEtAl2025,Nisoli2026}; what that argument uses, and what this paper supplies, is
a lower bound on $\sigmin(A-zI)$ for a discretisation that is large and strongly non-normal, in the
Euclidean norm~\cite{Nisoli2026} or in a norm bounded through it~\cite{BlumenthalEtAl2025}.
Section~\ref{sec:numbz} certifies such a bound, with the count and isolation discs of the leading
eigenvalues, for the Ulam discretisation of a noisy Belousov--Zhabotinsky map at $2^{16}$ cells,
reached only through its applications to vectors. A
concrete instance is the statistical periodicity of noise-induced order, where the period is set by
the leading nontrivial resonance of an annealed transfer operator~\cite{SatoNisoli2026}; that the
resonance is the leading one is a claim about what is not there, which the count of
Section~\ref{sec:exterior} supplies for a discretisation.
For the transfer operator itself, of an analytic expanding map of a domain in $\C^d$ acting on
holomorphic functions, the singular values are bounded by $C\exp(-cn^{1/d})$ with $C$ and $c$ computed
from the map~\cite{Bandtlow2008,BandtlowJenkinson2008}. By Proposition~\ref{prop:frame} such a bound
gives $\bar g$ for a frame containing the exact leading singular vectors on one side; for a computed
frame a bound on the distance between the computed and the exact singular subspaces is needed in
addition, and we do not supply it.

\subsection{Related work}\label{sec:related}

The invertibility half of Theorem~\ref{thm:assembly} is known: for a bounded $2\times2$ block
operator, $\lambda$ lies in the resolvent set whenever both diagonal blocks are invertible at
$\lambda$ and $\norm{(A_{11}-\lambda)^{-1}A_{12}(A_{22}-\lambda)^{-1}A_{21}}<1$, together with the
same inequality with the indices exchanged~\cite[Prop.~1.13.3]{Tretter2008}, and
$\sig_{11}\sig_{22}>\gamma_1\gamma_2$ implies both. The product condition is Ostrowski's, and
Brauer deduced from it the ovals of Cassini~\cite{Brauer1947},~\cite[\S2.1]{Varga2004}; and the block Gershgorin sets built from norms of inverses
are~\cite[Thm.~1.13.1]{Tretter2008} and~\cite[Ch.~6]{Varga2004}. The quadratic numerical range
$W^2$, the union of the spectra of the $2\times2$ matrices of the quadratic forms of the four blocks,
contains the spectrum, bounds the resolvent through the distance to it, and, when its closure has two
components containing the numerical ranges of the two diagonal blocks, splits the spectrum
accordingly, the part in the first component being that of an operator on the first
space~\cite[Thms.~2.3, 3.1, 4.4 and~5.1]{LangerEtAl2001}; its definition runs over the quadratic forms
of the far block on the whole of its space.

Stewart's theorems hold for non-normal matrices and closed
operators~\cite{Stewart1971,Stewart1973}, but their hypothesis is on the separation, which he
evaluates only for Hermitian blocks, where in the Frobenius norm it equals the distance between the
two spectra~\cite[Thm.~4.7]{Stewart1973}; for diagonalisable blocks he bounds it through that
distance and the condition numbers of the two eigenvector matrices~\cite[Thm.~4.8 and Cor.~4.9]{Stewart1973}, one
of which belongs to the part acting on the complement, and he notes that the separation may be much
smaller than the distance between the spectra and that its estimation is a difficult
problem~\cite[\S4.3]{Stewart1973}. The integral representation of the inverse of the Sylvester
map~\cite[(1.2)]{LumerRosenblum1959} bounds it below through resolvent norms on a contour separating
the two spectra, and with $\norm{(G_{22}-zI)^{-1}}\le(\abs z-\bar g)^{-1}$ this makes the hypothesis
checkable from the four numbers of the Main Theorem, but only when the whole spectrum of $G_{11}$ lies
outside a contour in $\{\abs z>\bar g\}$. The frame of Definition~\ref{def:Q} puts further
eigenvalues into the compression, inside the disc, so Stewart's hypothesis cannot be
checked this way on the compressions this paper builds, while the Main Theorem allows those
eigenvalues anywhere inside it.

What this paper adds to these is, first, a construction of the frame to be certified, joining the eigendirections outside the disc
to left and right singular vectors so as to bring $\bar g$ below the radius and to make the
coupling small, with both $\gamma_1$ and $\gamma_2$ bounded by the first singular value of $A$
left out of the frame (Section~\ref{sec:frame}); then a hypothesis checked on a contour, under which the further
eigenvalues this frame brings into the compression may lie anywhere inside the disc; the explicit
constant in~\eqref{eq:assembly}, with the count and the projector; and the certified
bound on $\norm{G_{22}}$, at $mn$ applications of the operator against a lower bound of $n-k$ for any
deterministic method.

The existing routes to an exterior count each want something that an operator reached only through
$x\mapsto Ax$ and $x\mapsto A^{*}x$ does not provide: the inertia
of $A-\sigma I$ wants $A=A^{*}$ and a factorisation of the large matrix; the variational enclosures
of~\cite{Lehmann1963,Behnke1991,Plum1991} want a bound on the next eigenvalue, which is the quantity
to be produced; the contour methods~\cite{SakuraiSugiura2003,Beyn2012}, verified
in~\cite{ImakuraEtAl2020}, want a certified solve at every quadrature node. The method
of~\cite{TeraoEtAl2025} bounds the smallest singular value of a matrix that is not Hermitian by
testing the positive definiteness of $A^{*}A-\theta I$ and $AA^{*}-\theta I$ at each point; it needs a
factorisation of these matrices, which are formed from the stored matrix, and for this reason we did
not compare it with ours.

Reductions that build a coarse operator from the leading eigenvectors and ask that the leading
spectrum survive~\cite{DeuflhardEtAl2000,DeuflhardWeber2005,FroylandDellnitz2003} carry a priori
bounds on the eigenvalue error~\cite{SarichEtAl2010,DjurdjevacEtAl2012}, written through the
projection error of the true eigenfunctions, which is not computable without the answer; ours are a
posteriori, every quantity being computed and outward rounded. The price is that $Q$ is not a
partition and $G_{11}$ is not a chain on aggregated states.

Extended dynamic mode decomposition builds a Galerkin matrix for the Koopman operator on a dictionary
of observables from snapshot pairs~\cite{WilliamsEtAl2015}; residual dynamic mode decomposition
attaches to each candidate the Galerkin residual over the dictionary and returns, for each
$\varepsilon$, a set contained in the approximate point pseudospectrum and converging to it as the
dictionary grows~\cite{ColbrookTownsend2024,Colbrook2024}; the adjoint side, the transfer operator
itself, and spaces other than $L^2$ are treated in~\cite{HerwigEtAl2025}. The quantity is the one the
Main Theorem turns on, used in the other direction: for the Koopman operator $K$, a residual
$\norm{(K-\lambda I)g}/\norm{g}$ bounds
$\sigmin(K-\lambda I)$ from above, which places the set returned inside the pseudospectrum, while
\eqref{eq:introincl} bounds $\sigmin$ from below and so states that outside the disc there is nothing
further to find. Their statements are about the operator; ours are about a discretisation of it, and
a bound on a discretisation says nothing about the operator without the argument
of~\cite{BlumenthalEtAl2025,Nisoli2026} recalled in Section~\ref{sec:origin}, which this paper does
not carry out. Theorem~5.1
of~\cite{ColbrookTownsend2024} has the finite section resolvent of an isometry converging at every
$\abs\lambda>1$, by a Neumann series in which $\norm K=1$ plays the part $\bar g$ plays in
\eqref{eq:introres}, known in advance there and certified here.

%=======================================================================
\section{The two-block bound}\label{sec:bound}
%=======================================================================

Throughout this section $\mathcal H$ is a separable complex Hilbert space, $A$ a bounded operator on it,
$Q:\C^k\to\mathcal H$ an isometry, $P_\perp=I-QQ^{*}$, and the blocks are those of
\eqref{eq:introblocks}.

Every statement is in the resolvent norm, and a bound on $\norm{(T-zI)^{-1}}$ presupposes the
inverse. What a lower estimate of $\norm{Tx}$ produces is the injection modulus \eqref{eq:jintro}, and
in finite dimension $j(T)>0$ gives invertibility; for an operator it gives injectivity with closed
range, the unilateral shift on $\ell^2(\mathbb N)$ having $j(S)=1$ and no inverse. What is equivalent
is
\begin{equation}\label{eq:invcrit}
  j(T)>0\quad\text{and}\quad j(T^{*})>0,
\end{equation}
the first making the range closed and the second, through $\ker T^{*}=(\ran T)^{\perp}$,
dense~\cite[III-(5.10), IV-Thm.~5.2]{Kato1995}. The proofs below bound both, and once $T$ is
invertible $\norm{T^{-1}}=1/j(T)$. It is $j$ and not $\sigmin$ because a general bounded operator has
no singular value sequence, only the spectrum of $\abs T=(T^{*}T)^{1/2}$, of which $j(T)$ is the
bottom; the same distinction makes $\{z:j(M-zI)\le\varepsilon\}$ a subset of
$\Lambda_\varepsilon(M)$, the difference carried by the eigenvalues of the
adjoint~\cite[App.~B]{ColbrookTownsend2024}.

\subsection{The floor}\label{sec:assembly}

\begin{theorem}\label{thm:assembly}
Let $z\in\C$ and let $\gamma_1\ge\norm{G_{12}}$, $\gamma_2\ge\norm{G_{21}}$. Suppose the two diagonal
blocks are invertible on their spaces at $z$, and write
\[
\sig_{11}(z)=\frac1{\norm{(G_{11}-zI)^{-1}}}=\sigmin(G_{11}-zI),\qquad
\sig_{22}(z)=\frac1{\norm{(G_{22}-zI)^{-1}}} .
\]
If $\sig_{11}(z)\,\sig_{22}(z)>\gamma_1\gamma_2$ then $A-zI$ is invertible and
\begin{equation}\label{eq:assembly}
  \frac1{\norm{(A-zI)^{-1}}}\;\ge\;
  \frac{\sig_{11}(z)\,\sig_{22}(z)-\gamma_1\gamma_2}
       {\sig_{11}(z)+\sig_{22}(z)+\gamma_1+\gamma_2}\;>\;0 ,
\end{equation}
the right-hand side increasing in each of $\sig_{11},\sig_{22}$ and decreasing in each of
$\gamma_1,\gamma_2$. Only the two diagonal quantities depend on $z$.
\end{theorem}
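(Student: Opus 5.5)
The plan is to bound the injection modulus $j(A-zI)$ from below by a direct two-block estimate, do the same for the adjoint, and then conclude invertibility through \eqref{eq:invcrit}, with $\norm{(A-zI)^{-1}}=1/j(A-zI)$.

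\textbf{Step 1: the two-block estimate.} Work in the unitary similarity \eqref{eq:introblocks}. Take a unit vector $x=(u,v)$ with $u\in\C^k$, $v$ in the complement, and set $a=\norm u$, $b=\norm v$, so $a^2+b^2=1$. Write $r=(A-zI)x=(r_1,r_2)$, where
\[
r_1=(G_{11}-zI)u+G_{12}v,\qquad r_2=G_{21}u+(G_{22}-zI)v.
\]
The triangle inequality gives the scalar inequalities
\[
\sig_{11}a-\gamma_1 b\le\norm{r_1},\qquad -\gamma_2 a+\sig_{22}b\le\norm{r_2}.
\]
Here $\sig_{22}=j(G_{22}-zI)$ because that block is invertible.

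\textbf{Step 2: inverting the $2\times2$ system.} Put $M=\begin{pmatrix}\sig_{11}&-\gamma_1\\-\gamma_2&\sig_{22}\end{pmatrix}$ and $D=\det M=\sig_{11}\sig_{22}-\gamma_1\gamma_2>0$. Then $M$ is a nonsingular M-matrix, and its inverse $D^{-1}\begin{pmatrix}\sig_{22}&\gamma_1\\ \gamma_2&\sig_{11}\end{pmatrix}$ is entrywise nonnegative. So the componentwise inequality $M(a,b)^T\le(\norm{r_1},\norm{r_2})^T$ can be multiplied by $M^{-1}$ without changing direction. Adding the two resulting components gives
\[
a+b\le\frac{(\sig_{22}+\gamma_2)\norm{r_1}+(\sig_{11}+\gamma_1)\norm{r_2}}{D}.
\]
Write $S=\sig_{11}+\sig_{22}+\gamma_1+\gamma_2$. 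Two elementary facts close the estimate:
\begin{itemize}
\item $a+b\ge\sqrt{a^2+b^2}=1$;
\item $c_1\norm{r_1}+c_2\norm{r_2}\le\sqrt{c_1^2+c_2^2}\,\norm r\le(c_1+c_2)\norm r=S\norm r$, by Cauchy--Schwarz.
\end{itemize}
Together they give $\norm r\ge D/S$. Hence $j(A-zI)\ge D/S>0$.

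\textbf{Step 3: the adjoint.} In the same block form, $(A-zI)^*$ has diagonal blocks $G_{11}^*-\bar zI$ and $G_{22}^*-\bar zI$, and off-diagonal blocks $G_{21}^*$ and $G_{12}^*$. The diagonal blocks are invertible with the same $\sig_{11},\sig_{22}$, since $\norm{T^{-1}}=\norm{(T^*)^{-1}}$. The off-diagonal norms are still bounded by $\gamma_2,\gamma_1$, with the roles exchanged. The bound $D/S$ is symmetric under swapping $\gamma_1$ and $\gamma_2$, so Steps 1 and 2 give $j((A-zI)^*)\ge D/S>0$. By \eqref{eq:invcrit}, $A-zI$ is invertible, and \eqref{eq:assembly} follows.

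\textbf{Step 4: monotonicity.} In $\gamma_1$ or $\gamma_2$, the numerator decreases and the denominator increases, so the bound decreases. For $\sig_{11}$, the derivative of $D/S$ has numerator
\[
\sig_{22}S-D=\sig_{22}(\sig_{22}+\gamma_1+\gamma_2)+\gamma_1\gamma_2>0,
\]
and $\sig_{22}$ is symmetric. The $z$-dependence enters only through $\sig_{11}$ and $\sig_{22}$, since $\gamma_1,\gamma_2$ are fixed bounds.

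The only step needing care is Step 2: the componentwise inversion relies on the nonnegativity of $M^{-1}$, which is exactly where the hypothesis $D>0$ is used. I do not expect a genuine obstacle beyond getting the constant $S$ rather than a $\sqrt2$-type constant, which the Cauchy--Schwarz bound above handles.
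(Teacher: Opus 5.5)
Your proof is correct and follows essentially the same approach as the paper. The shared steps are the decomposition of a unit vector into its two orthogonal block components, the two scalar inequalities, elimination leading to $a+b\ge1$, the adjoint handled with the roles of $\gamma_1,\gamma_2$ swapped followed by \eqref{eq:invcrit}, and the same derivative for monotonicity; the only difference is that you phrase the elimination as multiplication by the entrywise nonnegative inverse of the $2\times2$ M-matrix, where the paper substitutes one inequality into the other.
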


\begin{proof}
Write a unit vector as $x=Qx_1+x_2$ with $x_1\in\C^k$, $x_2\in\ran P_\perp$, $a=\norm{x_1}$,
$b=\norm{x_2}$, $a^2+b^2=1$, and $t=\norm{(A-zI)x}$. The two components of $(A-zI)x$ are orthogonal,
so $t$ is at least the norm of either, whence $\sig_{11}a-\gamma_1b\le t$ and
$\sig_{22}b-\gamma_2a\le t$. From the first, $a\le(t+\gamma_1b)/\sig_{11}$; substituting into the
second, $b(\sig_{11}\sig_{22}-\gamma_1\gamma_2)\le t(\sig_{11}+\gamma_2)$, and symmetrically
$a(\sig_{11}\sig_{22}-\gamma_1\gamma_2)\le t(\sig_{22}+\gamma_1)$. Add, and use
$a+b\ge\sqrt{a^2+b^2}=1$. Monotonicity is the derivative in $\sig_{11}$, whose numerator is
$\sig_{22}^2+\sig_{22}(\gamma_1+\gamma_2)+\gamma_1\gamma_2>0$. That bounds $j(A-zI)$. The adjoint has
in the same splitting the diagonal blocks $(G_{11}-zI)^{*}$ and $(G_{22}-zI)^{*}$, invertible with
the same resolvent norms, and the off-diagonal blocks $G_{21}^{*}$, $G_{12}^{*}$ of norms $\gamma_2$
and $\gamma_1$, so the same computation bounds $j\bigl((A-zI)^{*}\bigr)$ by the same quantity; apply
\eqref{eq:invcrit}.
\end{proof}

\begin{remark}\label{rem:counterexample}
For $A=\bigl(\begin{smallmatrix}1&1\\1&1\end{smallmatrix}\bigr)$ with $\Qs=\operatorname{span}e_1$
and $z=0$, all four quantities are $1$, so $\sig_{11}\sig_{22}-\gamma_1\gamma_2=0$, while
$\sigmin(A)=0$: no bound positive under $\sig_{11},\sig_{22}>0$ alone can hold. This $A$ is
symmetric, so what fails is the splitting and not the normality.
\end{remark}

\begin{proposition}\label{prop:blockdom}
In the setting of Theorem~\ref{thm:assembly}, if $\gamma_1<\sig_{11}(z)$ and
$\gamma_2<\sig_{22}(z)$, then $A-zI$ is invertible and
\begin{equation}\label{eq:blockdom}
  \frac1{\norm{(A-zI)^{-1}}}\;\ge\;\min\bigl(\sig_{11}(z),\sig_{22}(z)\bigr)
  \Bigl(1-\max\Bigl(\frac{\gamma_1}{\sig_{11}(z)},\frac{\gamma_2}{\sig_{22}(z)}\Bigr)\Bigr).
\end{equation}
\end{proposition}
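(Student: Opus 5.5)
The plan is a block-diagonal-dominance argument: factor $A-zI$ as its diagonal part times a perturbation of the identity, then run a Neumann series in a weighted norm. Write $D=\operatorname{diag}(G_{11}-zI,\,G_{22}-zI)$ and $E$ for the off-diagonal part, so that in the splitting \eqref{eq:introblocks}
\[
A-zI\cong D+E=D\,(I+D^{-1}E),\qquad
D^{-1}E=\begin{pmatrix}0&(G_{11}-zI)^{-1}G_{12}\\ (G_{22}-zI)^{-1}G_{21}&0\end{pmatrix}.
\]
The two nonzero blocks have norms at most $\gamma_1/\sig_{11}$ and $\gamma_2/\sig_{22}$. Set $q=\max(\gamma_1/\sig_{11},\gamma_2/\sig_{22})$, which is $<1$ by hypothesis.

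First I bound $\norm{D^{-1}E}$. The operator $D^{-1}E$ sends $(x_1,x_2)$ to $(B_1x_2,B_2x_1)$ with $\norm{B_i}\le q$. Orthogonality of the two components then gives
\[
\norm{D^{-1}Ex}^2\le q^2(\norm{x_2}^2+\norm{x_1}^2)=q^2\norm x^2 .
\]
So $\norm{D^{-1}E}\le q<1$, and no weighting is needed after all. The Neumann series then makes $I+D^{-1}E$ invertible, with $\norm{(I+D^{-1}E)^{-1}}\le 1/(1-q)$. Hence $A-zI$ is invertible as a product of invertibles, which sidesteps the two-sided criterion \eqref{eq:invcrit} entirely.

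For the estimate I write $(A-zI)^{-1}=(I+D^{-1}E)^{-1}D^{-1}$. The operator $D$ is block diagonal, so $\norm{D^{-1}}=\max(1/\sig_{11},1/\sig_{22})=1/\min(\sig_{11},\sig_{22})$. Multiplying the two bounds gives
\[
\norm{(A-zI)^{-1}}\le\frac{1}{\min(\sig_{11},\sig_{22})\,(1-q)},
\]
which is \eqref{eq:blockdom} after inversion.

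The only step needing care is the norm bound on the off-diagonal operator $D^{-1}E$. It relies on the output components lying in the orthogonal spaces $\C^k$ and $\ran P_\perp$, so the squared norms add. That is the same orthogonality used in the proof of Theorem~\ref{thm:assembly}, so I expect no real obstacle.
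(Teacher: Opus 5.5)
Your proposal is correct and is the paper's proof: the factorisation $A-zI=D(I+D^{-1}E)$, the bound $\norm{D^{-1}E}\le\max(\gamma_1/\sig_{11},\gamma_2/\sig_{22})<1$ from the block anti-diagonal structure, a Neumann series, and $\norm{D^{-1}}=1/\min(\sig_{11},\sig_{22})$. The orthogonality computation you spell out is the detail the paper leaves to the phrase ``the operator is block anti-diagonal''.
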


\begin{proof}
Write $A-zI=D(I+D^{-1}E)$ with $D$ block diagonal and $E$ the off-diagonal part. The blocks of
$D^{-1}E$ have norms at most $\gamma_1/\sig_{11}$ and $\gamma_2/\sig_{22}$, and the operator is block
anti-diagonal, so $\norm{D^{-1}E}$ is at most the larger of the two ratios, which is below $1$; the Neumann series
gives $\norm{(A-zI)^{-1}}\le\norm{D^{-1}}/(1-\norm{D^{-1}E})$ with
$\norm{D^{-1}}=1/\min(\sig_{11},\sig_{22})$.
\end{proof}

\begin{remark}
Proposition~\ref{prop:blockdom} is tighter than \eqref{eq:assembly} where the couplings are small
relative to the gaps, and its hypothesis fails as $z$ approaches an eigenvalue of $G_{11}$, where
the hypothesis of \eqref{eq:assembly} holds as long as $\sig_{11}>\gamma_1\gamma_2/\sig_{22}$. Its hypothesis
implies $\sig_{11}\sig_{22}>\gamma_1\gamma_2$, so it never applies where \eqref{eq:assembly} does
not.
\end{remark}

\subsection{The far block}\label{sec:power}

The shift enters the far block diagonally, $P_\perp(A-zI)P_\perp=G_{22}-zI$, so one $z$-independent
bound serves at every $z$: if $\bar g\ge\norm{G_{22}}$ and $\abs z>\bar g$ then a Neumann series
gives $\norm{(G_{22}-zI)^{-1}}\le(\abs z-\bar g)^{-1}$, that is
\begin{equation}\label{eq:farshort}
  \sig_{22}(z)\;\ge\;\abs{z}-\bar g .
\end{equation}

\subsection{Inclusion, count and projector}\label{sec:extbound}

\begin{proposition}\label{prop:complete}
Let $\bar g\ge\norm{G_{22}}$, $\gamma_1\ge\norm{G_{12}}$, $\gamma_2\ge\norm{G_{21}}$, and let
$R>\bar g$ and $\varepsilon>0$ satisfy $\varepsilon(R-\bar g)>\gamma_1\gamma_2$. Then
\begin{equation}\label{eq:incl}
  \Lambda_{\delta}(A)\cap\{\abs z>R\}\;\subseteq\;\Lambda_{\varepsilon}(G_{11})\cap\{\abs z>R\},
  \qquad
  \delta=\frac{\varepsilon(R-\bar g)-\gamma_1\gamma_2}
              {\varepsilon+(R-\bar g)+\gamma_1+\gamma_2}\;>\;0 .
\end{equation}
At a single $z$ the same computation with $\sig_{22}(z)$ in place of $R-\bar g$ gives a larger
$\delta$. An eigenvalue $\lambda$ of $A$ with $\abs\lambda>\bar g$, at which $G_{22}-\lambda I$ is
invertible by \eqref{eq:farshort}, satisfies $\sig_{11}(\lambda)\sig_{22}(\lambda)\le\gamma_1\gamma_2$,
so it is an eigenvalue of $G_{11}$ when $\gamma_1\gamma_2=0$.
\end{proposition}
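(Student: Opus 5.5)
The plan is to prove the contrapositive: take $z$ with $\abs z>R$ and $z\notin\Lambda_\varepsilon(G_{11})$, and show $z\notin\Lambda_\delta(A)$. By \eqref{eq:pseudo} in finite dimension, $z\notin\Lambda_\varepsilon(G_{11})$ means $G_{11}-zI$ is invertible with $\sig_{11}(z)>\varepsilon$. Since $\abs z>R>\bar g\ge\norm{G_{22}}$, \eqref{eq:farshort} makes $G_{22}-zI$ invertible with $\sig_{22}(z)\ge\abs z-\bar g>R-\bar g$. Both diagonal blocks are therefore invertible at $z$, and Theorem~\ref{thm:assembly} applies as soon as its product hypothesis holds.

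That hypothesis follows from the assumption: $\sig_{11}(z)\sig_{22}(z)>\varepsilon(R-\bar g)>\gamma_1\gamma_2$. Theorem~\ref{thm:assembly} then gives that $A-zI$ is invertible and that $1/\norm{(A-zI)^{-1}}$ is at least the right side of \eqref{eq:assembly}. That right side is increasing in $\sig_{11}$ and in $\sig_{22}$, and we replace $\sig_{11}$ by the smaller value $\varepsilon$ and $\sig_{22}$ by the smaller value $R-\bar g$. The key point is that the monotonicity in Theorem~\ref{thm:assembly} was shown wherever the numerator is positive, and it is positive all along the segment joining the two points. Indeed $\sig_{11}\sig_{22}$ only grows from $\varepsilon(R-\bar g)$, which already exceeds $\gamma_1\gamma_2$. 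Since the inequality in the monotone replacement is strict, we get $1/\norm{(A-zI)^{-1}}>\delta$, that is, $\norm{(A-zI)^{-1}}<1/\delta$, so $z\notin\Lambda_\delta(A)$. This gives \eqref{eq:incl}. Positivity of $\delta$ is immediate from $\varepsilon(R-\bar g)>\gamma_1\gamma_2$.

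The single-$z$ refinement is the same argument with $R-\bar g$ replaced by $\sig_{22}(z)\ge\abs z-\bar g>R-\bar g$; monotonicity again makes $\delta$ larger. For the eigenvalue claim, let $\lambda$ be an eigenvalue of $A$ with $\abs\lambda>\bar g$. Then $G_{22}-\lambda I$ is invertible by \eqref{eq:farshort}. If $G_{11}-\lambda I$ were invertible with $\sig_{11}(\lambda)\sig_{22}(\lambda)>\gamma_1\gamma_2$, Theorem~\ref{thm:assembly} would make $A-\lambda I$ invertible, which is a contradiction. If $G_{11}-\lambda I$ is singular, then $\sig_{11}(\lambda)=0$ and the inequality holds trivially. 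Either way $\sig_{11}(\lambda)\sig_{22}(\lambda)\le\gamma_1\gamma_2$. When $\gamma_1\gamma_2=0$ this forces $\sig_{11}(\lambda)=0$, because $\sig_{22}(\lambda)>0$; so $\lambda\in\spec(G_{11})$.

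The only delicate step is the monotone replacement, which needs the numerator to stay positive along the path; the check above settles it. The argument also needs $\sig_{11}$ to be interpreted as $\sigmin$ when $G_{11}-zI$ is singular, and the finite-dimensional identity $\Lambda_\varepsilon(G_{11})=\{z:\sigmin(G_{11}-zI)\le\varepsilon\}$ handles this.
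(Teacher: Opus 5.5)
Your proof is correct and is the paper's argument. You take $\abs z>R$ with $\sig_{11}(z)>\varepsilon$, get $\sig_{22}(z)>R-\bar g$ from \eqref{eq:farshort}, and apply Theorem~\ref{thm:assembly} with its monotonicity to obtain $\norm{(A-zI)^{-1}}<1/\delta$; the eigenvalue claim is the failure of that theorem's hypothesis at a singular point.

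Your check that the numerator stays positive along the segment is harmless but unnecessary. The derivative of the right-hand side of \eqref{eq:assembly} in $\sig_{11}$ has numerator $\sig_{22}^2+\sig_{22}(\gamma_1+\gamma_2)+\gamma_1\gamma_2$, which is positive whatever the sign of $\sig_{11}\sig_{22}-\gamma_1\gamma_2$.
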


\begin{proof}
Let $\abs z>R$ with $\sig_{11}(z)>\varepsilon$. By \eqref{eq:farshort} $\sig_{22}(z)>R-\bar g$, and
the right-hand side of \eqref{eq:assembly} increases in $\sig_{11}$ and $\sig_{22}$, so
Theorem~\ref{thm:assembly} gives $\norm{(A-zI)^{-1}}<1/\delta$. At a singular $\lambda$ the
hypothesis of Theorem~\ref{thm:assembly} fails.
\end{proof}

\begin{proposition}\label{prop:count}
Let $\bar g\ge\norm{G_{22}}$, $\gamma_1\ge\norm{G_{12}}$, $\gamma_2\ge\norm{G_{21}}$, let $\Gamma$ be a
rectifiable Jordan curve on which $\sig_{11}(z)\sig_{22}(z)>\gamma_1\gamma_2$ pointwise, and let $D$ be a connected component of
$\C\setminus\Gamma$ with $D\cup\Gamma\subseteq\{\abs z>\bar g\}$. Then $A$ and $G_{11}$ have the same
number of eigenvalues in $D$, counted with algebraic multiplicity, and that number is at most $k$.
Nothing is counted inside $\{\abs z\le\bar g\}$, where no hypothesis of this paper holds.
\end{proposition}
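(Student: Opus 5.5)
We prove the count by a homotopy that switches off the couplings, keeping $A(t)-zI$ invertible on $\Gamma$ throughout, and then compare Riesz projectors.

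For $t\in[0,1]$ set
\[
A(t)=\begin{pmatrix} G_{11} & tG_{12}\\ tG_{21} & G_{22}\end{pmatrix}
\]
in the splitting~\eqref{eq:introblocks}. The couplings of $A(t)$ have norms at most $t\gamma_1\le\gamma_1$ and $t\gamma_2\le\gamma_2$. The diagonal blocks do not change with $t$, and on $\Gamma\subseteq\{\abs z>\bar g\}$ the far block $G_{22}-zI$ is invertible by~\eqref{eq:farshort}. So Theorem~\ref{thm:assembly} applies at every $z\in\Gamma$ and every $t$. It shows that $A(t)-zI$ is invertible, with $\norm{(A(t)-zI)^{-1}}$ bounded by the reciprocal of the right-hand side of~\eqref{eq:assembly}, and the monotonicity in $\gamma_1,\gamma_2$ keeps this bound uniform in $t$. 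Since $\Gamma$ is compact and $\sig_{11},\sig_{22}$ are continuous in $z$, being reciprocals of norms of continuous resolvents, the quantity $\sig_{11}\sig_{22}-\gamma_1\gamma_2$ has a positive minimum on $\Gamma$. Hence the resolvent $(A(t)-zI)^{-1}$ is bounded uniformly on $[0,1]\times\Gamma$.

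Next we define the Riesz projector
\[
P(t)=\frac{1}{2\pi i}\oint_\Gamma (zI-A(t))^{-1}\,dz,
\]
with the orientation of $\Gamma$ chosen so that it encloses $D$; if $D$ is the unbounded component, replace $D$ by the bounded one with the opposite orientation. The second resolvent identity gives $(zI-A(t))^{-1}-(zI-A(s))^{-1}=(zI-A(t))^{-1}(A(t)-A(s))(zI-A(s))^{-1}$, and $\norm{A(t)-A(s)}\le\abs{t-s}(\gamma_1+\gamma_2)$. Together with the uniform bound and rectifiability of $\Gamma$, this shows that $t\mapsto P(t)$ is norm-continuous. Norm-continuous families of projections have constant rank, since $\norm{P(t)-P(s)}<1$ forces $\rank P(t)=\rank P(s)$ (Kato). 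So $\rank P(1)=\rank P(0)$.

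At $t=0$ the operator is block diagonal, so $P(0)$ is the sum of the Riesz projectors of $G_{11}$ and of $G_{22}$ over $\Gamma$. The part from $G_{22}$ vanishes: $\spec(G_{22})\subseteq\{\abs z\le\bar g\}$, which misses $D\cup\Gamma$. Therefore $\rank P(0)$ is the number of eigenvalues of $G_{11}$ in $D$ counted with algebraic multiplicity, and this is at most $k$.

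At $t=1$ we have $A(1)\cong A$. Since $\rank P(1)\le k$ is finite, $\spec(A)\cap D$ is a spectral set whose spectral subspace is finite-dimensional. The restriction of $A$ to $\ran P(1)$ is then a finite matrix, and $\spec(A)\cap D$ consists of finitely many eigenvalues whose algebraic multiplicities add up to $\rank P(1)$. This gives the count. The last sentence of the statement needs no proof: off $\{\abs z>\bar g\}$, \eqref{eq:farshort} is unavailable.

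The main obstacle is the identification at $t=1$ in infinite dimensions. One has to make sure that "eigenvalues counted with algebraic multiplicity" agrees with the rank of the Riesz projector. This holds here because that rank is finite, so the spectral part of $A$ inside $D$ is the spectrum of a finite matrix. The continuity step is routine once the uniform resolvent bound from Theorem~\ref{thm:assembly} is in hand.
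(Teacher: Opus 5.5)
Your homotopy in the couplings, the uniform resolvent bound from Theorem~\ref{thm:assembly}, the norm continuity of the Riesz projector and the constancy of its rank are the paper's argument, and for bounded $D$ your proof is complete. It breaks for the unbounded component. That is the case the Main Theorem and Corollary~\ref{cor:gap} use, with $D=\{\abs z>R\}$.

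For a bounded operator, $\frac1{2\pi i}\oint_\Gamma(zI-A(t))^{-1}\,dz$ taken over $\Gamma$ alone equals $\pm P_{\mathrm{in}}(t)$ for either orientation. Here $P_{\mathrm{in}}(t)$ is the Riesz projector for the part of the spectrum in the \emph{bounded} component of $\C\setminus\Gamma$. No choice of orientation turns it into the projector for the unbounded component. With the clockwise orientation it is not even idempotent, so the rank argument does not apply to it.

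Your identification at $t=0$ then fails. When $D$ is unbounded, $D\cup\Gamma\subseteq\{\abs z>\bar g\}$ puts $\spec(G_{22})\subseteq\{\abs z\le\bar g\}$ inside the bounded component. So the $G_{22}$ part of the integral is $\pm I$ on $\ran P_\perp$, not $0$. The rank you would be counting is $\dim\ran P_\perp$ plus the number of eigenvalues of $G_{11}$ inside $\Gamma$. That is infinite when $\mathcal H$ is, and in no case the number of eigenvalues in $D$. The step you flagged as the main obstacle, the identification at $t=1$, is routine once the rank is finite; the unbounded case is where care is needed.

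What is missing is the contribution at infinity. Take $r_0>\max\bigl(\sup_t\norm{A(t)},\max_{z\in\Gamma}\abs z\bigr)$, which is finite since $\norm{A(t)}\le\norm A+\gamma_1+\gamma_2$. Integrate over the boundary of $D\cap\{\abs z<r_0\}$: $\Gamma$ clockwise together with the circle $\abs z=r_0$ counterclockwise. On that circle the Neumann series gives $\norm{(zI-A(t))^{-1}}\le(r_0-\sup_t\norm{A(t)})^{-1}$, uniformly in $t$. The integral over the circle is $I$, so the projector is $I-P_{\mathrm{in}}(t)$, the Riesz projector for $\spec(A(t))\cap D$. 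It is norm-continuous because $P_{\mathrm{in}}(t)$ is.

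At $t=0$, $P_{\mathrm{in}}(0)$ contains the full identity on $\ran P_\perp$. Hence $I-P_{\mathrm{in}}(0)$ is the spectral projector of $G_{11}$ for its eigenvalues in $D$, padded by $0$ on $\ran P_\perp$, of rank at most $k$. The rest of your argument then goes through unchanged. This is how the paper treats the unbounded case.
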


\begin{proof}
Put $A_t=\bigl(\begin{smallmatrix}G_{11}&tG_{12}\\ tG_{21}&G_{22}\end{smallmatrix}\bigr)$ for
$t\in[0,1]$. Its couplings have product at most $\gamma_1\gamma_2$ and \eqref{eq:assembly} decreases
in them, so Theorem~\ref{thm:assembly} bounds $\norm{(A_t-zI)^{-1}}$ on $\Gamma\times[0,1]$ by the
reciprocal of the right-hand side of \eqref{eq:assembly}, which is continuous and positive on the
compact $\Gamma$, $\sig_{11}$ and $\sig_{22}$ being $1$-Lipschitz, and so has a positive minimum. If $D$ is bounded let $\Gamma_D$ be its boundary; otherwise let
$r_0>\max(\sup_t\norm{A_t},\max_{z\in\Gamma}\abs z)$ and let $\Gamma_D$ be the boundary of
$D\cap\{\abs z<r_0\}$, which contains the
same spectrum since $\spec(A_t)\subseteq\{\abs z\le\norm{A_t}\}$ and the resolvent is bounded on the
added circle by a Neumann series. Then $P_t=\frac1{2\pi i}\oint_{\Gamma_D}(zI-A_t)^{-1}\,dz$ is a
projection commuting with $A_t$, the Riesz projection for the part of $\spec(A_t)$ in
$D$~\cite[III-\S6.4]{Kato1995}, and depends continuously on $t$ in norm, by the resolvent identity.
Two projections at norm distance less than one have ranges of the same
dimension~\cite[I-\S4.6 and Problem~III-3.21]{Kato1995}, so $\rank P_t$ is constant. At $t=0$ the
operator is block diagonal and $\spec(G_{22})\subseteq\{\abs z\le\bar g\}$ misses $D$, so $P_0$ is
the spectral projection of $G_{11}$ for its eigenvalues in $D$, of rank at most $k$. Hence every
$P_t$ has finite rank, and that rank is the number of eigenvalues of $A_t$ in $D$ counted with
algebraic multiplicity~\cite[III-\S6.5]{Kato1995}.
\end{proof}

\begin{corollary}\label{cor:agree}
Let $R>\bar g$, and let $D_1,\dots,D_p$ be disjoint closed discs in $\{\abs z>R\}$ whose union
contains every eigenvalue of $G_{11}$ of modulus $>R$. If the hypothesis of
Proposition~\ref{prop:count} holds on $\abs{z}=R$ and on each $\partial D_i$, then every eigenvalue
of $A$ of modulus $>R$ lies in $D_1\cup\dots\cup D_p$, and each $D_i$ contains as many eigenvalues of
$A$ as of $G_{11}$, counted with algebraic multiplicity.
\end{corollary}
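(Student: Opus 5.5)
The plan is to apply Proposition~\ref{prop:count} twice: once to the exterior region cut out by the circle $\abs z=R$, and once to each disc $D_i$. Throughout, \eqref{eq:farshort} will be used to read off $\sig_{22}$ on every curve involved.

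\textbf{Exterior count.} Take $\Gamma=\{\abs z=R\}$ and let $D$ be its unbounded component, $\{\abs z>R\}$. Since $R>\bar g$, we have $D\cup\Gamma\subseteq\{\abs z>\bar g\}$. On $\Gamma$ the hypothesis $\sig_{11}\sig_{22}>\gamma_1\gamma_2$ is assumed, and \eqref{eq:farshort} makes $G_{22}-zI$ invertible there. So Proposition~\ref{prop:count} says that $A$ and $G_{11}$ have the same finite number $N$ of eigenvalues in $\{\abs z>R\}$, counted with algebraic multiplicity. By hypothesis all $N$ eigenvalues of $G_{11}$ lie in $D_1\cup\dots\cup D_p$.

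\textbf{Count in each disc.} Take $\Gamma=\partial D_i$ and let $D$ be the open interior of $D_i$. Because $D_i\subseteq\{\abs z>R\}\subseteq\{\abs z>\bar g\}$, the containment condition of Proposition~\ref{prop:count} holds. The proposition then gives that the interior of $D_i$ contains equally many eigenvalues of $A$ and of $G_{11}$, say $n_i$ of each.

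For this to give the statement about the closed disc, I need that neither operator has an eigenvalue on $\partial D_i$. For $A$, the bound of Theorem~\ref{thm:assembly} gives invertibility of $A-zI$ on $\partial D_i$. For $G_{11}$, the hypothesis forces $\sig_{11}(z)>0$ there. So the counts on the closed discs and on their interiors agree.

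\textbf{Conclusion by summing.} The discs are disjoint and contain every eigenvalue of $G_{11}$ of modulus $>R$, so $\sum_i n_i=N$. On the side of $A$, the sum $\sum_i n_i$ counts the eigenvalues of $A$ in $\bigcup_i D_i$. These all lie in $\{\abs z>R\}$, where $A$ has exactly $N$ eigenvalues. The counts therefore match, and no eigenvalue of $A$ of modulus $>R$ lies outside $\bigcup_i D_i$.

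\textbf{Main obstacle.} The delicate point is bookkeeping rather than analysis: every eigenvalue must be accounted for exactly once. For $\abs z>R$ this needs the finiteness supplied by Proposition~\ref{prop:count}; in particular, the eigenvalues of $A$ in $\{\abs z>R\}$ form isolated points of finite algebraic multiplicity. The sum over disjoint discs must then equal the Riesz-projection rank for the whole exterior. The cleanest way to see this is that the Riesz projection for $\{\abs z>R\}$ is the sum of the projections for the $D_i$ and the projection for the complement of $\bigcup_i D_i$ in $\{\abs z>R\}$, and ranks add. Equality of the totals then forces the last rank to be zero.
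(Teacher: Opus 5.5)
Your proposal is correct and follows the paper's proof: you apply Proposition~\ref{prop:count} to $\{\abs z>R\}$ and to the interior of each $D_i$, use the hypothesis on $\partial D_i$ to exclude eigenvalues of $A$ and of $G_{11}$ there, and sum over the disjoint discs. Your closing remark on the additivity of Riesz projection ranks only makes explicit the bookkeeping that the paper leaves implicit in ``summing over $i$''.
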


\begin{proof}
Apply Proposition~\ref{prop:count} to the interior of each $D_i$ and to $\{\abs z>R\}$. The
hypothesis on $\partial D_i$ excludes eigenvalues of $A$ and of $G_{11}$ there, so each closed $D_i$
holds as many eigenvalues of $A$ as of $G_{11}$. Summing over $i$, the discs hold as many eigenvalues
of $A$ as $G_{11}$ has in $\{\abs z>R\}$, which is the number $A$ has there.
\end{proof}

\begin{remark}\label{rem:radii}
When $\gamma_1\gamma_2>0$ the discs cannot shrink to points, since
$\sig_{11}$ vanishes at the eigenvalues of $G_{11}$ and the hypothesis asks
$\sig_{11}>\gamma_1\gamma_2/\sig_{22}$ on each $\partial D_i$, so their radii are how well the
eigenvalues of $A$ are located. Inside the disc nothing is claimed: an eigenvalue of $G_{11}$ with
$\abs\lambda<R$ is neither asserted to be an eigenvalue of $A$ nor a fault of the frame, and it costs
something only through $\sig_{11}$, which it depresses in its neighbourhood, so an artefact just
inside the circle can make the hypothesis fail on the circle.
\end{remark}

\begin{remark}\label{rem:cover}
The hypothesis is checked at finitely many points of the contour. Both $\sig_{11}$ and $\abs z-\bar g$
are $1$-Lipschitz in $z$, so on an arc between two samples each is at least its certified lower bound
at the nearer sample less half the length of the arc; the sample points are computed in floating
point, and a margin bounding their distance from the exact points of the circle is subtracted as
well. Testing the hypothesis with these lower bounds, and bisecting the arcs on which it fails,
certifies it at every point of the circle. When the frame is not orthonormal the lower bound on
$\sig_{11}$ used here is the one of Proposition~\ref{prop:almostortho}, which bounds $\sigmin$ of an
exact compression and is again $1$-Lipschitz. The bisection is capped, at depth $64$ in
Section~\ref{sec:numerics}; a cover then fails only where the hypothesis fails at a sample point,
where an arc cannot be split in floating point, or where the cap is reached.
\end{remark}

\begin{proposition}\label{prop:projector}
Let $\Gamma$ and $D$ be as in Proposition~\ref{prop:count} and $\abs\Gamma$ the length of $\Gamma$.
Let $M$ be the supremum over $z\in\Gamma$ of the reciprocal of the right-hand side of
\eqref{eq:assembly}, and
\[
  M_0=\sup_{z\in\Gamma}\max\Bigl(\frac1{\sig_{11}(z)},\frac1{\sig_{22}(z)}\Bigr)\ \le\ M .
\]
Let $P$ be the Riesz projector of $A$ for its spectrum in $D$ and $P_0$ the spectral projector of
$G_{11}$ for its eigenvalues in $D$, padded by zero on $\ran P_\perp$. Then
\begin{equation}\label{eq:projector}
  \norm{P-P_0}\;\le\;\frac{\abs\Gamma}{2\pi}\,M\,M_0\,\max(\gamma_1,\gamma_2),
\end{equation}
which on the circle $\abs z=R$ reads $R\,M\,M_0\max(\gamma_1,\gamma_2)$. The range of $P_0$ is
$Q\,\ran P_{G_{11}}$, and the gap between the two ranges is at most $\norm{P-P_0}$.
\end{proposition}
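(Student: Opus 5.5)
The plan is to write both projectors as contour integrals over $\Gamma_D$ and compare the resolvents through the second resolvent identity. Let $D_0=\operatorname{diag}(G_{11},G_{22})$ be the block-diagonal part of $A$ in the splitting \eqref{eq:introblocks} and $E=A-D_0$ its off-diagonal part. Then $\norm{E}\le\max(\gamma_1,\gamma_2)$, since $E$ is block anti-diagonal and so maps the two orthogonal components into swapped orthogonal components.

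First I will identify $P_0$. As in the proof of Proposition~\ref{prop:count} at $t=0$, the Riesz projector of $D_0$ for $D$ is $\frac1{2\pi i}\oint_{\Gamma_D}(zI-D_0)^{-1}\,dz$. Because $\spec(G_{22})\subseteq\{\abs z\le\bar g\}$ misses $D$, the $G_{22}$ block of this integral vanishes. What remains is $Q P_{G_{11}} Q^{*}$, which is exactly $P_0$ padded by zero, with range $Q\,\ran P_{G_{11}}$. For unbounded $D$ I will use the truncated contour $\Gamma_D$ of that proof. On the added large circle I will either let $r_0\to\infty$, where the difference of the two resolvents is $O(r_0^{-2})$ and its integral tends to zero, or simply note that the integrand difference there is negligible. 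Either way, the bound involves only $\Gamma$.

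Next I will use $(zI-A)^{-1}-(zI-D_0)^{-1}=(zI-A)^{-1}E(zI-D_0)^{-1}$. On $\Gamma$, Theorem~\ref{thm:assembly} gives $\norm{(zI-A)^{-1}}\le M$. The block-diagonal resolvent has norm $\max(1/\sig_{11},1/\sig_{22})\le M_0$. Integrating gives $\norm{P-P_0}\le\frac{\abs\Gamma}{2\pi}MM_0\max(\gamma_1,\gamma_2)$. On the circle $\abs z=R$ the length is $2\pi R$, which gives the stated form.

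Two side claims remain. The inequality $M_0\le M$ follows from the reciprocal of \eqref{eq:assembly} being at least $1/\min(\sig_{11},\sig_{22})$. That reduces to $(s_1s_2-\gamma_1\gamma_2)\le\min(s_1,s_2)(s_1+s_2+\gamma_1+\gamma_2)$, which holds since $s_1s_2\le\min\cdot\max$. For the gap between ranges, I will cite Kato's bound that the gap between $\ran P$ and $\ran P_0$ is at most $\norm{P-P_0}$ for projections; for orthogonal ones it is equality. For oblique $P$ I will use $\norm{(I-P_0)P x}=\norm{(P-P_0)Px}$ and the analogous identity, but the result there is only bounded relative to $\norm{P}$.

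The main obstacle is this gap statement when $P$ is not orthogonal. The clean form holds if the gap is measured as $\sup_{x\in\ran P,\|x\|=1}\operatorname{dist}(x,\ran P_0)\le\norm{(I-P_0)x}$. But that bound requires $\norm{I-P_0}=1$, i.e.\ $P_0$ orthogonal, so I will check whether $P_{G_{11}}$ must be taken orthogonal, or else fall back on Kato I-\S4.6.
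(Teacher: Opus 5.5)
Your bound on $\norm{P-P_0}$ follows the paper's proof. You split $A=A_0+E$ with $A_0=G_{11}\oplus G_{22}$ and $\norm{E}\le\max(\gamma_1,\gamma_2)$, integrate the second resolvent identity over $\Gamma$ with $\norm{(zI-A)^{-1}}\le M$ and $\norm{(zI-A_0)^{-1}}\le M_0$, and let the added circle for unbounded $D$ contribute nothing as $r_0\to\infty$. In fact it contributes exactly nothing, since over a circle enclosing both spectra each resolvent integral is $2\pi i\,I$. Your check of $M_0\le M$ is correct and supplies what the paper leaves implicit.

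The gap is in the last claim, which you leave unproved because of an obstacle that does not exist. No orthogonality of $P_0$ is needed.

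\begin{itemize}
\item For any idempotent $P_0$, the vector $P_0x$ lies in $\ran P_0$, so $\operatorname{dist}(x,\ran P_0)\le\norm{x-P_0x}$.
\item For a unit $x\in\ran P$ one has $x=Px$, hence $x-P_0x=(P-P_0)x$ and $\operatorname{dist}(x,\ran P_0)\le\norm{P-P_0}$.
\item Symmetrically, a unit $y\in\ran P_0$ has $\operatorname{dist}(y,\ran P)\le\norm{y-Py}=\norm{(P_0-P)y}\le\norm{P-P_0}$.
\end{itemize}

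Taking the two suprema bounds the gap by $\norm{P-P_0}$ with both projectors oblique; this is the paper's one-line argument.

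The factor $\norm{P}$ you worried about appears only if you bound the operator norm $\norm{(I-P_0)P}=\norm{(P-P_0)P}$ over all unit vectors. The gap ranges only over unit vectors of $\ran P$, where $P$ acts as the identity. The condition $\norm{I-P_0}=1$ would be needed only for the reverse inequality $\norm{(I-P_0)x}\le\operatorname{dist}(x,\ran P_0)$, which the statement does not use.

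Your first fallback, checking whether $P_{G_{11}}$ can be taken orthogonal, is not available in any case. $P_{G_{11}}$ is the Riesz projector of a generally non-normal $G_{11}$, so it is fixed by $G_{11}$ rather than chosen, and in general it is oblique.
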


\begin{proof}
Write $A=A_0+E$ with $A_0=G_{11}\oplus G_{22}$ and
$E=\bigl(\begin{smallmatrix}0&G_{12}\\ G_{21}&0\end{smallmatrix}\bigr)$, so $E^{*}E$ is block
diagonal and $\norm E=\max(\norm{G_{12}},\norm{G_{21}})$. On $\Gamma$ both operators are invertible
and the resolvent identity gives $(zI-A)^{-1}-(zI-A_0)^{-1}=(zI-A)^{-1}E(zI-A_0)^{-1}$; integrate
over $\Gamma$ and divide by $2\pi i$. When $D$ is unbounded the added circle of radius $M'$ carries
an integrand bounded by a constant times $\norm E/M'^{2}$, whose contribution vanishes. For the last claim, a unit
$x\in\ran P$ has $(I-P_0)x=(P-P_0)x$, and symmetrically.
\end{proof}

\begin{remark}\label{rem:projcontour}
The projector, and with it the invariant subspace of $A$, is enclosed by the resolvent bounds alone,
with no fixed-point argument such as those by which the eigenpair enclosures
of~\cite{Rump2022a,Miyajima2014a} are refined. The contour should be taken as large as
Proposition~\ref{prop:count} allows. For a semisimple eigenvalue of $G_{11}$ at distance $\Delta$ from
the other eigenvalues, on a circle of radius $r<\Delta$ about it the length contributes
$\abs\Gamma/2\pi=r$, while $1/\sig_{11}$, and with it $M$ and $M_0$, grows like $1/r$ as $r$ decreases
($1/\sig_{22}$ does not grow); the product then behaves like $\max(\gamma_1,\gamma_2)/r$ and is
smallest at $r$ about $\Delta/2$. A Jordan block of size $\ell$ makes $1/\sig_{11}$ grow like
$r^{-\ell}$ and the product like $r^{1-2\ell}$. What is lost on a small contour is the
cancellation in the integral, which bounding the integrand pointwise does not see.
\end{remark}

\subsection{Self-adjoint operators}\label{sec:selfadj}

\begin{corollary}\label{cor:selfadj}
Let $A=A^{*}$, let $\theta_1\ge\dots\ge\theta_k$ be the eigenvalues of $G_{11}$, and let $\bar
g\ge\norm{G_{22}}$ and $\bar\gamma\ge\norm{P_\perp AQ}$. Then
\begin{enumerate}[label=\textup{(\roman*)},nosep]
\item every point $\lambda$ of the spectrum of $A$ with $\abs\lambda>\bar g$ satisfies
  $\min_i\abs{\lambda-\theta_i}(\abs\lambda-\bar g)\le\bar\gamma^{2}$;
\item if $\bar g<x_1<x_2$ and $x_1$, $x_2$ are at distance more than $\bar\gamma^{2}/(x_1-\bar g)$
  from every $\theta_i$, then $A$ and $G_{11}$ have the same number of eigenvalues in $[x_1,x_2]$,
  counted with multiplicity, and the same holds for $[-x_2,-x_1]$.
\end{enumerate}
\end{corollary}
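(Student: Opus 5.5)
The plan is to specialise Proposition~\ref{prop:complete} and Proposition~\ref{prop:count} to the self-adjoint case. Two simplifications come from $A=A^{*}$. First, $G_{12}=Q^{*}AP_\perp=(P_\perp AQ)^{*}$, so $\norm{G_{12}}=\norm{G_{21}}$ and we may take $\gamma_1=\gamma_2=\bar\gamma$, with $\gamma_1\gamma_2=\bar\gamma^{2}$. Second, $G_{11}$ is Hermitian, so $\sig_{11}(z)=\min_i\abs{z-\theta_i}$ for every $z\in\C$.

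For (i), I would use that the spectrum of a self-adjoint $A$ is real and approximate-point. Let $\lambda\in\spec(A)$ with $\abs\lambda>\bar g$. By \eqref{eq:farshort}, $G_{22}-\lambda I$ is invertible with $\sig_{22}(\lambda)\ge\abs\lambda-\bar g$.
\begin{enumerate}[label=\textup{(\alph*)},nosep]
\item Suppose $\lambda$ is not an eigenvalue of $G_{11}$. Then both diagonal blocks are invertible. If $\min_i\abs{\lambda-\theta_i}(\abs\lambda-\bar g)>\bar\gamma^{2}$, then also $\sig_{11}\sig_{22}>\bar\gamma^{2}$, and Theorem~\ref{thm:assembly} makes $A-\lambda I$ invertible, a contradiction.
\item If $\lambda=\theta_i$, the inequality is trivially $0\le\bar\gamma^{2}$.
\end{enumerate}
So (i) follows directly from Theorem~\ref{thm:assembly}, applied to points of the spectrum rather than only to eigenvalues.

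For (ii), I would apply Proposition~\ref{prop:count} to a contour around $[x_1,x_2]$. Take $\Gamma$ to be the circle with diameter $[x_1,x_2]$, or a slightly shrunk rectangle. There are three requirements.
\begin{enumerate}[label=\textup{(\alph*)},nosep]
\item \emph{Containment.} We need $\Gamma\cup D\subseteq\{\abs z>\bar g\}$. Since $x_1>\bar g$, the circle with centre $(x_1+x_2)/2$ and radius $(x_2-x_1)/2$ stays in $\{\Re z\ge x_1\}$, so $\abs z\ge\Re z\ge x_1>\bar g$ on it.
\item \emph{The hypothesis on $\Gamma$.} We need $\sig_{11}(z)(\abs z-\bar g)>\bar\gamma^{2}$ on $\Gamma$. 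Here $\abs z-\bar g\ge x_1-\bar g$, so it suffices that $\operatorname{dist}(z,\{\theta_i\})>\bar\gamma^{2}/(x_1-\bar g)$. At the real points $x_1,x_2$ of $\Gamma$ this is exactly the assumption.
\item \emph{Conclusion.} Once the hypothesis holds, the counts of $A$ and $G_{11}$ in the open region agree. No eigenvalue of either lies on $\Gamma$, so the endpoints are excluded for both, and the open and closed intervals have the same counts.
\end{enumerate}

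The main obstacle is requirement (b) away from the real axis. A $\theta_i$ lying just outside $[x_1,x_2]$, but at distance exceeding $\rho:=\bar\gamma^{2}/(x_1-\bar g)$, can still come closer than $\rho$ to the arc of the circle near the endpoint. A circle is therefore the wrong contour. I would instead use a tall rectangle with vertical sides $\Re z=x_1$ and $\Re z=x_2$ and horizontal sides at $\Im z=\pm h$, with $h$ large.
\begin{enumerate}[label=\textup{(\alph*)},nosep]
\item On a vertical side, $\abs{z-\theta_i}\ge\abs{x_j-\theta_i}>\rho$, which is the assumption.
\item On a horizontal side, $\abs{z-\theta_i}\ge h>\rho$ for $h$ large.
\end{enumerate}
Containment needs care because $\Re z\ge x_1$ everywhere on the rectangle. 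Thus $\abs z\ge x_1>\bar g$, so $D\cup\Gamma\subseteq\{\abs z>\bar g\}$.

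Two further checks remain.
\begin{enumerate}[label=\textup{(\alph*)},nosep]
\item On the rectangle I would use the sharper bound $\sig_{22}(z)\ge\abs z-\bar g\ge x_1-\bar g$, which is what the inequality in (b) requires.
\item For self-adjoint $A$, the algebraic multiplicity equals the spectral multiplicity. Hence "counted with multiplicity" matches the Riesz-projector rank of Proposition~\ref{prop:count}, and the finiteness shows the spectrum of $A$ in $[x_1,x_2]$ consists of finitely many eigenvalues.
\end{enumerate}
The case $[-x_2,-x_1]$ is the mirror-image rectangle, or equivalently the same argument applied to $-A$.
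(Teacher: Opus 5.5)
Your proof is correct and follows the paper's route. Part (i) takes $\gamma_1=\gamma_2=\bar\gamma$ and $\sig_{11}(z)=\min_i\abs{z-\theta_i}$ and combines Theorem~\ref{thm:assembly} with \eqref{eq:farshort}; the paper cites the Main Theorem, which amounts to the same thing. Part (ii) applies Proposition~\ref{prop:count} on a contour through $x_1$ and $x_2$, and the endpoints are excluded because both spectra are real and neither has a point on the contour.

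The one difference is the contour, and your reason for abandoning the circle is mistaken. Take the circle with centre $c=(x_1+x_2)/2$ and radius $r=(x_2-x_1)/2$. For real $\theta$ and $z$ on this circle, the reverse triangle inequality gives
\[
\abs{z-\theta}\ge\bigl|\,\abs{\theta-c}-r\,\bigr|=\min(\abs{\theta-x_1},\abs{\theta-x_2}).
\]
So no real $\theta_i$, whether inside $[x_1,x_2]$ or just outside it, comes closer to the arc than to the nearer endpoint. This is exactly the fact the paper uses, and the circle with diameter $[x_1,x_2]$ works as it stands.

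Your tall rectangle is also valid: on its vertical sides $\abs{z-\theta_i}\ge\abs{x_j-\theta_i}$, and on its horizontal sides $\abs{z-\theta_i}\ge h$. It buys nothing beyond the extra choice $h>\bar\gamma^{2}/(x_1-\bar g)$.
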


\begin{proof}
Since $G_{21}=G_{12}^{*}$, both couplings have norm at most $\bar\gamma$, and since $G_{11}$ is
Hermitian, $\sig_{11}(z)=\min_i\abs{z-\theta_i}$; the Main Theorem gives (i). For (ii) let $\Gamma$
be the circle with diameter $[x_1,x_2]$ and $D$ the open disc it bounds. Every $z\in D\cup\Gamma$ has
$\abs z\ge\operatorname{Re}z\ge x_1>\bar g$, and for $z\in\Gamma$ and real $\theta$,
$\abs{z-\theta}$ is at least the distance from $\theta$ to the nearer of $x_1$, $x_2$; apply
Proposition~\ref{prop:count}. Both spectra are real, and neither endpoint is an eigenvalue of $A$, by
(i), or of $G_{11}$, by hypothesis. For $[-x_2,-x_1]$ apply the same to $-A$.
\end{proof}

The radius in (i) is quadratic in the coupling, of the same form as the bound of Kato and
Temple~\cite{Temple1928,Kato1949}, which needs an interval containing no other eigenvalue of $A$;
here the gap is measured to $\bar g$. In finite dimension the ordered pairing gives a radius linear in the
coupling, better where $\bar\gamma>\abs\lambda-\bar g$.

\begin{corollary}\label{cor:weyl}
In Corollary~\ref{cor:selfadj} let $\mathcal H=\C^n$, order the eigenvalues of $A$ as
$\lambda_1\ge\dots\ge\lambda_n$, and let $j_+=\#\{i:\theta_i>\bar g\}$. Then
$\abs{\lambda_i-\theta_i}\le\bar\gamma$ for $i\le j_+$ and $\lambda_{j_++1}\le\bar g+\bar\gamma$, and
symmetrically at the bottom of the spectrum.
\end{corollary}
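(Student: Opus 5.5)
The plan is to replace the pairing by distance used in Corollary~\ref{cor:selfadj} with Weyl's inequality for an additive Hermitian perturbation. Splitting as in the proof of Proposition~\ref{prop:projector}, write, in the unitary similarity \eqref{eq:introblocks},
\[
A\;\cong\;A_0+E,\qquad A_0=G_{11}\oplus G_{22},\qquad
E=\begin{pmatrix}0&G_{12}\\ G_{21}&0\end{pmatrix}.
\]
Both $A_0$ and $E$ are Hermitian, since $A=A^{*}$ gives $G_{11}^{*}=G_{11}$, $G_{22}^{*}=G_{22}$ and $G_{12}=G_{21}^{*}$. Because $E^{*}E$ is block diagonal, $\norm{E}=\max(\norm{G_{12}},\norm{G_{21}})=\norm{P_\perp AQ}\le\bar\gamma$. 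A unitary similarity does not change the eigenvalues, so Weyl's inequality gives
\[
\abs{\lambda_i(A)-\lambda_i(A_0)}\le\bar\gamma\qquad\text{for every }i=1,\dots,n,
\]
with both spectra in decreasing order.

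The step that needs care is identifying the ordered spectrum of $A_0$. It is the multiset union of $\theta_1\ge\dots\ge\theta_k$ and the eigenvalues of the Hermitian matrix $G_{22}$. The latter all lie in $[-\bar g,\bar g]$, because $\norm{G_{22}}\le\bar g$. By definition of $j_+$ we have $\theta_1,\dots,\theta_{j_+}>\bar g$, and these exceed every eigenvalue of $G_{22}$ as well as $\theta_{j_++1},\dots,\theta_k$, which are $\le\bar g$. Hence $\lambda_i(A_0)=\theta_i$ for $i\le j_+$. Moreover $\lambda_{j_++1}(A_0)$, if it exists, is the larger of $\theta_{j_++1}$ and $\lambda_{\max}(G_{22})$, both at most $\bar g$. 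Substituting into Weyl's inequality gives $\abs{\lambda_i-\theta_i}\le\bar\gamma$ for $i\le j_+$ and $\lambda_{j_++1}\le\bar g+\bar\gamma$.

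Two degenerate cases need a remark. If $k=n$, then $G_{22}$ is absent and the merge involves only the $\theta_i$. If $j_+=n$, the second claim is vacuous.

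For the bottom of the spectrum, apply the same argument to $-A$. Its compression is $-G_{11}$, with eigenvalues $-\theta_k\ge\dots\ge-\theta_1$. The bounds $\bar g$ and $\bar\gamma$ are unchanged, because negation preserves all three block norms. This gives $\abs{\lambda_{n+1-i}-\theta_{k+1-i}}\le\bar\gamma$ for $i\le j_-=\#\{i:\theta_i<-\bar g\}$, and $\lambda_{n-j_-}\ge-\bar g-\bar\gamma$.

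The only point I expect to need checking is that ties between the $\theta_i$ and the eigenvalues of $G_{22}$ cannot disturb the indexing. They cannot, because the inequality $\theta_{j_+}>\bar g$ is strict while the spectrum of $G_{22}$ lies in $[-\bar g,\bar g]$.
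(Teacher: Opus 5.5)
Your proof is correct and is essentially the paper's: the paper also writes $A$, in the basis $[\,Q\mid Q_\perp\,]$, as $(G_{11}\oplus G_{22})+E$ with $\norm{E}\le\bar\gamma$, deduces $\abs{\lambda_i-\lambda_i(G_{11}\oplus G_{22})}\le\bar\gamma$ from $D-\bar\gamma I\preceq D+E\preceq D+\bar\gamma I$ and the monotonicity of ordered eigenvalues (your Weyl inequality), and reads off the ordered spectrum of the block-diagonal part using $\spec(G_{22})\subseteq[-\bar g,\bar g]$. The paper leaves some details implicit, and you fill them in: the strict inequality $\theta_{j_+}>\bar g$ rules out ties in the indexing, and the bottom of the spectrum follows from the argument applied to $-A$.
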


\begin{proof}
In the basis $[\,Q\mid Q_\perp\,]$, $A$ is unitarily similar to $D+E$ with $D=G_{11}\oplus G_{22}$
and $E=\bigl(\begin{smallmatrix}0&G_{12}\\G_{12}^{*}&0\end{smallmatrix}\bigr)$; $E^{*}E$ is block
diagonal, so $\norm E\le\bar\gamma$. Then $D-\bar\gamma I\le D+E\le D+\bar\gamma I$ and the ordered
eigenvalues are monotone~\cite[I-Thm.~6.44]{Kato1995}, so
$\abs{\lambda_i-\lambda_i(D)}\le\bar\gamma$. The spectrum of $D$ is
$\{\theta_i\}\cup\spec(G_{22})$ with $\spec(G_{22})\subseteq[-\bar g,\bar g]$.
\end{proof}

\begin{remark}\label{rem:lanczos}
After $k$ steps of Lanczos, $AQ_k=Q_kT_k+\beta_kq_{k+1}e_k^{*}$, take for $Q$ the Ritz vectors
$Q_kS_{\mathcal C}$ whose residuals $r_i=\beta_k\abs{(s_i)_k}$ are below a tolerance. Then
$P_\perp AQ=\beta_kq_{k+1}e_k^{*}S_{\mathcal C}$ has rank one and norm
$(\sum_{i\in\mathcal C}r_i^{2})^{1/2}$ in exact arithmetic, so the radii above are at the convergence
tolerance, while with the whole Krylov basis as frame $\norm{P_\perp AQ}=\beta_k$, which need not
decay. In floating point $\bar\gamma$ is taken from the Gram bound of Section~\ref{sec:coupling} on
the frame actually computed, not from the $r_i$. Each Ritz value $\theta_i$ lies within $r_i$ of the
spectrum of $A$, by~\cite[Lemma~1]{DaviesPlum2004} applied to the frame of its Ritz vector alone;
what no residual gives is the count, which is the obstruction of Proposition~\ref{prop:floor}, and
$\bar g$ is what supplies it.
\end{remark}

\subsection{How far the bound reaches}\label{sec:reach}

The essential norm is $\norm A_{\mathrm{ess}}=\inf\{\norm{A-C}:C\ \text{compact}\}$, and it is
submultiplicative, since $ST$ and $(S-C_1)(T-C_2)$ differ by a compact operator; so
$\norm{A^m}_{\mathrm{ess}}$ is a submultiplicative sequence and
\begin{equation}\label{eq:ress}
  r_{\mathrm{ess}}(A)\;=\;\lim_{m\to\infty}\norm{A^m}_{\mathrm{ess}}^{1/m}
                     \;=\;\inf_{m\ge1}\norm{A^m}_{\mathrm{ess}}^{1/m}
\end{equation}
exists; it is the essential spectral radius. For $\abs z>r_{\mathrm{ess}}(A)$ the operator $A-zI$ is
Fredholm, and $\{\abs z>r_{\mathrm{ess}}(A)\}$ is connected and meets the resolvent set, so there the
spectrum of $A$ consists of isolated eigenvalues of finite algebraic
multiplicity~\cite[IV-\S5.6]{Kato1995}. On a Banach space $r_{\mathrm{ess}}$ is defined through a
measure of noncompactness, and \eqref{eq:ress} is a theorem~\cite{Nussbaum1970}.

\begin{proposition}\label{prop:essfloor}
For every finite-rank isometry $Q$ and every $\bar g\ge\norm{G_{22}}$,
\[
  \bar g\;\ge\;r_{\mathrm{ess}}(A).
\]
\end{proposition}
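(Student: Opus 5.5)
The plan is to show that $A$ differs from the operator $P_\perp AP_\perp$ by a finite-rank, hence compact, operator. Powers of $A$ then have essential norm at most $\bar g^m$, and \eqref{eq:ress} finishes the proof.

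First, write $P=QQ^{*}$, a finite-rank orthogonal projection, so $I=P+P_\perp$. Expanding
\[
A=(P+P_\perp)A(P+P_\perp)=P_\perp AP_\perp+\bigl(PAP+PAP_\perp+P_\perp AP\bigr),
\]
each term in the bracket contains a factor $P$ of rank at most $k$, so the bracket $C$ has finite rank and is compact. Hence $\norm A_{\mathrm{ess}}\le\norm{A-C}=\norm{P_\perp AP_\perp}=\norm{G_{22}}\le\bar g$, using the identity $\norm{G_{22}}=\norm{P_\perp AP_\perp}$ from the introduction.

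Next, for powers I would use the submultiplicativity already noted before \eqref{eq:ress}: $\norm{A^m}_{\mathrm{ess}}\le\norm A_{\mathrm{ess}}^m\le\bar g^m$. Alternatively, and directly, $A^m-(P_\perp AP_\perp)^m$ is a sum of products each containing a factor $C$, so it is compact and $\norm{A^m}_{\mathrm{ess}}\le\norm{P_\perp AP_\perp}^m$. Then \eqref{eq:ress} gives $r_{\mathrm{ess}}(A)=\inf_m\norm{A^m}_{\mathrm{ess}}^{1/m}\le\bar g$. In fact the case $m=1$ of the infimum already suffices, so the powers are not strictly needed.

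There is no real obstacle here. The only points to check are that the expansion of $A$ is correct and that $\ran P$ being finite-dimensional makes every term containing $P$ compact, which holds because compact operators form a two-sided ideal.
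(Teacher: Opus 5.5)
Your proof is correct and is essentially the paper's argument. Your finite-rank remainder $PAP+PAP_\perp+P_\perp AP=PA+P_\perp AP$ is exactly the paper's $QQ^{*}A+P_\perp AQQ^{*}$. From it both arguments get $\norm A_{\mathrm{ess}}\le\norm{G_{22}}\le\bar g$ and conclude with the $m=1$ term of the infimum in \eqref{eq:ress}, so, as you note, the discussion of powers is not needed.
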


\begin{proof}
Since $A-G_{22}=QQ^{*}A+P_\perp AQQ^{*}$ has rank at most $2k$, the two operators differ by a compact
one and have the same essential norm. Hence $\norm{G_{22}}\ge\norm{A}_{\mathrm{ess}}\ge
r_{\mathrm{ess}}(A)$ by \eqref{eq:ress}.
\end{proof}

The region in which the method reads its answer is $\abs z>\bar g$, and what it returns there
is read off $G_{11}$: a finite set of eigenvalues with their multiplicities.
Proposition~\ref{prop:essfloor} places that threshold at or above $r_{\mathrm{ess}}(A)$ and says
nothing about how much above.

%=======================================================================
\section{The small blocks}\label{sec:small}
%=======================================================================

From here to Section~\ref{sec:split} the space is $\C^n$: $\sv_1\ge\sv_2\ge\dots$ are the singular
values, $\norm\cdot_1$, $\norm\cdot_\infty$ and $\norm\cdot_F$ carry subscripts, $u$ is the unit
roundoff, $e_j$ the canonical basis, and cost is counted in applications of $A$ or $A^{*}$ and in stored vectors of length
$n$.

\subsection{The singular value enclosure}\label{sec:dense}

\begin{theorem}[{\cite[Thm.~10]{Miyajima2014}}]\label{thm:svdbox}
Let $A\in\C^{n_1\times n_2}$, $q=\min(n_1,n_2)$, and let $\widehat U\in\C^{n_1\times q}$, $\widehat
V\in\C^{n_2\times q}$ and diagonal $\widehat\Sigma\in\R^{q\times q}$ with
$\widehat\Sigma_{11}\ge\dots\ge\widehat\Sigma_{qq}\ge0$ be computed approximations. Put
$\widehat F=\widehat V^{*}\widehat V-I_q$, $\widehat G=\widehat U^{*}\widehat U-I_q$ and, when
$n_1\ge n_2$,
\[
  \rho=\frac{\norm{A\widehat V-\widehat U\widehat\Sigma}}{\sqrt{1-\norm{\widehat F}}},\qquad
  \underline c=\sqrt{\frac{1-\norm{\widehat G}}{1+\norm{\widehat F}}},\qquad
  \overline c=\sqrt{\frac{1+\norm{\widehat G}}{1-\norm{\widehat F}}},
\]
and when $n_1<n_2$ the same with the residual $\widehat U^{*}A-\widehat\Sigma\widehat V^{*}$ and with
$\widehat F$ and $\widehat G$ exchanged. If $\norm{\widehat F}<1$ and $\norm{\widehat G}<1$, then for
$i=1,\dots,q$
\[
  \underline c\,\widehat\Sigma_{ii}-\rho\;\le\;\sv_i(A)\;\le\;\overline c\,\widehat\Sigma_{ii}+\rho .
\]
\end{theorem}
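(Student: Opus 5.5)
The plan is to reduce to the exactly orthonormal case by polar-type factorisations of the two computed frames, and then apply Weyl's perturbation inequality for singular values. Take $n_1\ge n_2$, so $q=n_2$; the other case follows by applying the argument to $A^{*}$, which exchanges the roles of $\widehat U$ and $\widehat V$ and turns the residual into its adjoint.

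First, since $\norm{\widehat F}<1$, the Gram matrix $\widehat V^{*}\widehat V=I+\widehat F$ is positive definite. Every singular value of $\widehat V$ therefore lies in $[\sqrt{1-\norm{\widehat F}},\sqrt{1+\norm{\widehat F}}]$, and $\widehat V$ is square and invertible. In the same way every singular value of $\widehat U$ lies in $[\sqrt{1-\norm{\widehat G}},\sqrt{1+\norm{\widehat G}}]$, so $\widehat U$ has full column rank $q$.

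Second, write $\widehat U\widehat\Sigma=A\widehat V-\Delta$, where $\Delta$ is the residual. Multiplying on the right by $\widehat V^{-1}$ gives
\[
A=\widehat U\widehat\Sigma\widehat V^{-1}+\Delta\widehat V^{-1}.
\]
The perturbation term satisfies $\norm{\Delta\widehat V^{-1}}\le\norm\Delta/\sqrt{1-\norm{\widehat F}}=\rho$. Weyl's inequality $\abs{\sv_i(X+Y)-\sv_i(X)}\le\norm Y$ then gives
\[
\abs{\sv_i(A)-\sv_i(\widehat U\widehat\Sigma\widehat V^{-1})}\le\rho .
\]

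Third, bound the singular values of $B=\widehat U\widehat\Sigma\widehat V^{-1}$ by the two multiplicative Ostrowski-type inequalities
\[
\sv_i(XYZ)\le\norm X\,\sv_i(Y)\,\norm Z,\qquad
\sv_i(XYZ)\ge\sigmin(X)\,\sv_i(Y)\,\sigmin(Z),
\]
valid when $X$ has full column rank and $Z$ is square. They follow from the Courant--Fischer min-max characterisation, using $\norm{Xw}\ge\sigmin(X)\norm w$ and the fact that $Z$ maps subspaces bijectively. Take $X=\widehat U$, $Y=\widehat\Sigma$ and $Z=\widehat V^{-1}$. Since $\norm{\widehat V^{-1}}\le 1/\sqrt{1-\norm{\widehat F}}$ and $\sigmin(\widehat V^{-1})\ge1/\sqrt{1+\norm{\widehat F}}$, this yields
\[
\underline c\,\widehat\Sigma_{ii}\le\sv_i(B)\le\overline c\,\widehat\Sigma_{ii}.
\]
The ordering $\widehat\Sigma_{11}\ge\dots\ge\widehat\Sigma_{qq}\ge0$ ensures that $\sv_i(\widehat\Sigma)=\widehat\Sigma_{ii}$. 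Combining this with the second step gives the claim.

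The main obstacle is the lower multiplicative inequality in the rectangular setting, where $\widehat U$ is $n_1\times q$. I would handle it by min-max over $i$-dimensional subspaces $S\subseteq\C^q$:
\[
\sv_i(XYZ)=\max_{\dim S=i}\ \min_{w\in S,\ \norm w=1}\norm{XYZw}.
\]
Choose $S=Z^{-1}S_0$, where $S_0$ is the span of the top $i$ right singular vectors of $Y$. For unit $w\in S$ this gives $\norm{XYZw}\ge\sigmin(X)\,\sv_i(Y)\,\norm{Zw}\ge\sigmin(X)\,\sv_i(Y)\,\sigmin(Z)$. The upper inequality follows by the dual min-max argument.
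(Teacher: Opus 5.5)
Your proof is correct and takes essentially the paper's approach: the paper gives no proof of its own but defers to Miyajima, noting that his proof uses only the singular value inequalities for a product and for a perturbation and relies on $\sv_i(\widehat\Sigma)=\widehat\Sigma_{ii}$, which is exactly your factorisation $A=\widehat U\widehat\Sigma\widehat V^{-1}+\Delta\widehat V^{-1}$ followed by Weyl's inequality and the two multiplicative bounds. Your argument also runs directly over $\C$, as the paper asserts is possible, and your reduction of the case $n_1<n_2$ to $A^{*}$ reproduces the stated exchange of $\widehat F$ and $\widehat G$.
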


Miyajima states this for real matrices; the proof uses only the singular value inequalities for a
product and for a perturbation, so it holds over $\C$ with the transposes read as adjoints. The order
of $\widehat\Sigma$ is the one a singular value decomposition routine returns, and the proof uses it
through $\sv_i(\widehat\Sigma)=\widehat\Sigma_{ii}$.

Every matrix we enclose is a ball matrix $\mathbf B$ with midpoint $M$ and entrywise radius $R\ge0$.
We apply Theorem~\ref{thm:svdbox} to $M$, with the residual and the two defects formed in ball
arithmetic, and widen each interval by an upper bound on $\norm R$: for every $X$ with
$\abs{X-M}\le R$ entrywise,
\[
  \abs{\sv_i(X)-\sv_i(M)}\;\le\;\norm{X-M}\;\le\;\norm R,
\]
the first inequality being Weyl's: with $\mathcal D(Y)$ the Hermitian matrix
$\bigl(\begin{smallmatrix}0&Y\\Y^{*}&0\end{smallmatrix}\bigr)$, whose nonzero eigenvalues are the
$\pm\sv_i(Y)$ with $\sv_i(Y)>0$, the inequalities
$\mathcal D(M)-\norm{X-M}I\preceq\mathcal D(X)\preceq\mathcal D(M)+\norm{X-M}I$ and the monotonicity
of ordered eigenvalues~\cite[I-Thm.~6.44]{Kato1995} give it. The second is the monotonicity of the
spectral norm in the moduli of the entries~\cite[Lemma~6.6]{Higham2002}. Both index the singular values in decreasing
order, as the theorem does. We write $\svdbox(\mathbf B)$ for the interval vector this returns and
$\svdbox(\mathbf B)_1$ for the enclosure of $\norm{\mathbf B}$. The residual norms are bounded by any
immediate bound, the Frobenius norm among them: they are small, and a crude bound on a small quantity
is enough, which is what makes the apparent circularity harmless~\cite[p.~372]{Rump2011}.

The enclosure is two-sided, so the call that bounds $\norm{\mathbf B}$ from above bounds
$\sigmin(\mathbf B)$ from below, which is what the circle certificates of Section~\ref{sec:split} use.
Handing the ball $\mathbf B$ itself to Theorem~\ref{thm:svdbox} would bring $R$ into the residual
through the product $\mathbf B\widehat V$, whose radius is bounded entrywise by $R\abs{\widehat V}$,
and $\norm{R\abs{\widehat V}}$ can exceed $\norm R\norm{\widehat V}$ by a factor up to $\sqrt q$; the
widening keeps $R$ out of the theorem. Miyajima's Theorem~7 encloses $\sv_i(A)$ instead by
$\widehat\Sigma_{ii}\sqrt{(1\pm\norm{\widehat F})(1\pm\norm{\widehat G})}\pm\norm{\widehat E}$, with
$\widehat E=\widehat U\widehat\Sigma\widehat V^{*}-A$; its bounds are never worse than those from
square frames~\cite[Thm.~8]{Miyajima2014}, and in Miyajima's measurements the two theorems give
comparable bounds~\cite[Table~2]{Miyajima2014}, Theorem~\ref{thm:svdbox} at a cost of
$22\max(n_1,n_2)q^{2}+12q^{3}$ operations against $24\max(n_1,n_2)q^{2}+12q^{3}$.
Oishi~\cite{Oishi2001} bounds the singular values from an approximate singular value decomposition
with rounding-mode controlled arithmetic; Rump~\cite{Rump2011} gives four methods spanning a factor
of eight in cost; Rump and Lange~\cite{RumpLange2023} extend the line to all singular pairs with
individual bounds that hold through clusters.

\begin{remark}\label{rem:rumptrunc}
Rump's two-frame bound~\cite[Thm.~3.1]{Rump2011} runs in the other direction, mapping $A$ into the
frame coordinates as $U^{*}AV=D+E$ and reading the singular values off $D$, so the residual is the
off-diagonal part of the transformed matrix rather than a comparison with $A$. Truncating $U$ to its
first $q$ columns deletes the rows of $E$ the discarded columns carry, and while the lower bound
still holds, the upper bound can fail, on a $2n\times n$ example whose truncated residuals all vanish
although every singular value equals one~\cite[Rem.~3]{Miyajima2014}.
\end{remark}

\subsection{The compression}\label{sec:vbd}

We bound $\sig_{11}(z)=\sigmin(G_{11}-zI)$. Inside $G_{11}$ three groups of eigenvalues appear:
those outside the circle of radius $R$, which are to be enclosed and counted; those deep inside,
which affect nothing outside and need only be kept from leaving the disc; and those inside whose
pseudospectrum reaches outside, which have to be carried at full accuracy although they are never
reported. The first and the last bear on the statements outside the circle.

Section~\ref{sec:dense} encloses $\sigmin(G_{11}-zI)$ by one $\svdbox(G_{11}-zI)$ at $O(k^{3})$ per
sample, $G_{11}$ being a ball matrix. The eigenvalues of $G_{11}$ enter the certificate only through their number in a region,
which Proposition~\ref{prop:count} transfers to $A$. That number is read off inclusions of all the
eigenvalues of the computed compression by Theorem~2.2 of~\cite{Rump2022a}: when every cluster of
approximate eigenvalues passes the inclusion test of that theorem and the spectral radius condition
on the same iterate holds, checked through the Perron--Frobenius bound of its Remark~2.4, each cluster
of $m$ approximate eigenvalues carries a block of size $m$ whose eigenvalues lie in a disc about the
cluster, and the blocks together carry the whole spectrum. Where some cluster does not pass, the
theorem is applied again in a Schur frame, with the recursion of step~6 of Rump's algorithm on the
clusters that failed, a variant implemented in BallArithmetic.jl~\cite{BallArithmetic2023}; and
where that fails, the Gershgorin discs of a matrix similar to the computed compression serve
instead, the similarity being exact because the transformed matrix is enclosed through a verified
linear solve: every eigenvalue lies in the union of the discs, and a union of discs disjoint from the
others holds as many eigenvalues as it has discs~\cite[Thm.~1.6]{Varga2004}. In either case the count in a region is the sum of the counts of
the discs inside it, provided none meets its boundary, and Lemma~\ref{lem:homotopy} carries it to
$G_{11}$; where the eigenvalues outside the region lie is not needed. Elsewhere the
eigenvalues are used in floating point, as the centres of discs.

\subsubsection{Transport, and why the block route is not used}\label{sec:transport}

\begin{remark}\label{rem:vbdcond}
We tried the block-diagonalising route in several forms, each replacing the factorisation per $z$ by
an evaluation at $O(k)$: a verified block diagonalisation over an eigenvector frame, and over a Schur
frame with between-cluster Newton steps. On a circle at $1.2\norm{G_{11}}$ the off-block part of
the transformed matrix came to between $10^{-13}$ and $10^{-4}$, and its diagonal part alone came
within a factor of two and a half of $\sigmin(G_{11}-zI)$. The transport to the frame divides the
lower bound by the condition number $\kappa$ of the frame, and on a Grcar matrix $\kappa$ was $135$
at $k=16$ and $8.2\cdot10^{10}$ at $k=64$, taking the certified lower bound to $3\cdot10^{-11}$ of the
value the singular value decomposition returns. For a normal compression $\kappa=1$ and the same
evaluation costs $O(k)$ per sample against $O(k^{3})$.
\end{remark}

\subsection{The couplings}\label{sec:coupling}

The two coupling blocks are $n\times\dim\Qs$ up to an isometry, so each has a small Gram matrix:
\begin{equation}\label{eq:gram}
  \gamma_2^{2}=\norm{C^{*}C},\quad C=P_\perp AQ,
  \qquad
  \gamma_1^{2}=\norm{D^{*}D},\quad D=P_\perp A^{*}Q,
\end{equation}
Hermitian positive semidefinite of size $\dim\Qs$, formed in ball arithmetic from $2\dim\Qs$ applications of $A$ and $A^{*}$
and enclosed by $\svdbox$ at $O((\dim\Qs)^{3})$, once per operator and not per $z$. The products are
formed as $\tilde C=(I-QQ^{*})AQ$ and $\tilde D=(I-QQ^{*})A^{*}Q$ also when $Q$ is not orthonormal:
with $\Pi_\perp$ the orthogonal projection onto $\Qs^{\perp}$ one has $\Pi_\perp(I-QQ^{*})=\Pi_\perp$ and
$\norm{\Pi_\perp}\le1$, so $\norm{\Pi_\perp AQ}\le\norm{\tilde C}$ and
$\norm{Q^{*}A\Pi_\perp}\le\norm{\tilde D}$; for an isometry $\tilde C=C$ and $\tilde D=D$. For an operator applied to
vectors, the rounding of $AQ$ enters through $\varepsilon_A\norm A$, with $\norm A$ not known in
advance; the bounds are then affine in an upper bound on $\norm A$, certified with $\bar g$ in
Section~\ref{sec:fpsweep}. Theorem~\ref{thm:svdbox} applied to $C$ and $D$ directly is the
alternative, at comparable cost and with all $\dim\Qs$ singular values enclosed two-sided rather
than the largest alone, forming $C^{*}C$ squaring the conditioning so that the small singular values
are lost; we use it when they are wanted, and \eqref{eq:gram} otherwise. The sweep of Section~\ref{sec:matfree} also
applies to these two operators, with the overshoot of Proposition~\ref{prop:ladder} where the Gram
route is exact; it is the fallback for a $Q$ applied implicitly, whose columns are not available.

%=======================================================================
\section{The far block}\label{sec:norm}
%=======================================================================

The far block $G_{22}$ is $n\times n$, has no small representation and is reached only through
applications of $A$ and $A^{*}$, and it is on this block that the lower bound of
Proposition~\ref{prop:floor} applies.

\subsection{What the far block admits}\label{sec:lit}

We bound $B=G_{22}$ by the family $\norm B\le(\tr H^{m})^{1/2m}$ with
$H=B^{*}B$, whose first member is the Frobenius norm, together with the interpolation bound at every
order, which the same columns give at no further application (see \eqref{eq:mincert}). Every member is a function of all $n$ columns of
$B$, so each costs a full sweep, which by Proposition~\ref{prop:floor} is, up to the $k$
directions of the frame, what any deterministic bound costs. Rump collects the Frobenius norm at the head of~\cite[\S3]{Rump2011} with the
interpolation bound $\sqrt{\norm B_1\norm B_\infty}$ and the positive-test-vector bound of Collatz
applied to $\abs B^{\!\top}\abs B$: each can overestimate $\norm B$ by a factor up to $\sqrt n$, and
for the interpolation bound that factor is attained on Hadamard and Fourier matrices.

The other bounds available when the operator is reached only through applications, the
matrix-vector query model of Section~\ref{sec:matfree}, hold only with a user-chosen probability: statistics of a
single application~\cite{Dixon1983,HMT2011}, Lanczos bidiagonalisation
polynomials~\cite{Hochstenbach2013}, rank-one test vectors~\cite{BujanovicKressner2021} and their
refinements~\cite{Naumov2025}, and by
Proposition~\ref{prop:floor} none can be promoted to a certificate below $n-k$ applications. From a
deterministically chosen start vector, Krylov information does not solve the relative-error problem
for $\lambda_{\max}$ of a symmetric positive definite matrix at any number of steps, while a random
start makes it solvable on average~\cite{KW1992}. We use an estimator of this kind only to choose the
order and the power-of-two scale of the sweep (Remark~\ref{rem:impl}); nothing certified depends on
it.

The inequality of Section~\ref{sec:matfree} is classical and has been
made into an algorithm, Gram iteration~\cite{Delattre2023}: squaring $G^{(t+1)}=G^{(t)*}G^{(t)}$
with a rescaling against overflow gives $\sv_1(G)\le\norm{G^{(t)}}_F^{2^{1-t}}$, the order doubling
at each step. It has been made sound in exact rational arithmetic~\cite{Tobler2025} and in binary64
with a machine-checked error model~\cite{Murray2026}, and extended to any consistent
norm~\cite{Delattre2024}. All of these store the matrix explicitly, at $O(n^{3})$ flops and $O(n^{2})$
memory per step, and~\cite{Delattre2023} states that the method needs the matrix itself; that is the gap
Section~\ref{sec:matfree} fills, at any index rather than at powers of two, and at the price of
orders that grow one at a time.

The Collatz bound is cheap only for an entrywise nonnegative operator, and the far
block is not one: on $\abs{G_{22}}$ it needs every column of $G_{22}$, at $n$ applications of the
operator, and overestimates (Remark~\ref{rem:positive}).

\subsection{Bounding the far block from applications of the operator}\label{sec:matfree}

In this subsection $B$ is a linear operator on $\C^n$ known only through $x\mapsto Bx$ and $x\mapsto
B^{*}x$. An upper bound on $\norm{B}$ is the awkward direction for a projection method: for
$H=H^{*}\succeq0$ and any subspace $\mathcal S$, the largest Ritz value is
$\theta_{\max}=\max_{0\ne x\in\mathcal S}\langle Hx,x\rangle/\langle x,x\rangle\le\lambda_{\max}(H)$,
which is the wrong side, and the deficit is the angle between $\mathcal S$ and the maximising
eigenvector, which no residual computed inside $\mathcal S$ sees. This is a property of compressions,
and a trace is not a compression. The model is the matrix-vector query model, in which the number of
queries is the complexity measure and lower bounds are proved against adaptive
algorithms~\cite{SunEtAl2021}.

\begin{proposition}\label{prop:floor}
Let $B$ be a linear operator on $\C^n$, and let an algorithm make $N<n$ queries $x_1,\dots,x_N$ to
$B$ or $B^{*}$, each chosen adaptively from the previous answers, and return a number claimed to
bound $\norm{B}$ from above. Then no finite number it returns is an upper bound for every operator
consistent with the answers received.
\end{proposition}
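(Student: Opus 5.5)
Fix the algorithm and run it against the operator $B$. It makes queries $x_1,\dots,x_N$ with $N<n$, each to $B$ or to $B^{*}$, and receives answers $y_1,\dots,y_N$. The plan is to build a one-parameter family of operators $B_t$, $t\ge0$, such that every $B_t$ gives the same answers to the same queries. Since each query depends only on the earlier answers, the adaptive algorithm then follows the same path on every $B_t$ and returns the same number $\beta$. If we also have $\norm{B_t}\to\infty$, then $\beta$ fails to bound $\norm{B_t}$ for large $t$, which is the statement.

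For the construction, split the queries into those to $B$, whose vectors span a subspace $X$, and those to $B^{*}$, whose vectors span a subspace $Y$. Then $\dim X+\dim Y\le N<n$, so $\dim X^{\perp}+\dim Y^{\perp}\ge 2n-N>n$. The rank-one perturbation $E=vw^{*}$ with $w\in X^{\perp}$ and $v\in Y^{\perp}$ unit vectors is invisible to every query. Indeed $Ex=v\langle x,w\rangle=0$ for $x\in X$, and $E^{*}y=w\langle y,v\rangle=0$ for $y\in Y$. Both complements are nonzero, because each of $\dim X$ and $\dim Y$ is at most $N<n$, so unit vectors $v$ and $w$ exist. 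Put $B_t=B+tE$. Then $B_tx_i=Bx_i$ for the queries in $X$ and $B_t^{*}x_i=B^{*}x_i$ for those in $Y$, and $\norm{B_t}\ge t\norm{E}-\norm B=t-\norm B\to\infty$.

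The step that needs care is adaptivity. The complements $X^{\perp}$ and $Y^{\perp}$ are defined only after the run on $B$. Argue by induction on $i$ that the $i$th query made on $B_t$ equals $x_i$. It holds for $i=1$, since the first query is fixed by the algorithm. If the first $i$ queries agree, then the first $i$ answers agree because of the invisibility shown above, so the $(i+1)$th query, a function of those answers, agrees too. A randomised algorithm is not covered by this argument, but the statement concerns deterministic ones, so this suffices.

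In the setting of Section~\ref{sec:norm}, where only $A$ is queried and $B=G_{22}$, the same argument applies to perturbations of $A$ supported in $\ran P_\perp$. There $X$ and $Y$ are replaced by the projections of the queries, together with $\Qs$, which accounts for the count $n-k$. That refinement is not needed for the proposition as stated.
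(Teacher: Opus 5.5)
Your proof is correct and is the paper's argument: a rank-one perturbation invisible to every query, carried through the adaptive choices by induction, with norm at least $t-\norm{B}$. The one difference is that you take $vw^{*}$, with $w$ orthogonal to the queries to $B$ and $v$ orthogonal to those to $B^{*}$, whereas the paper takes $vv^{*}$ with a single $v$ orthogonal to all $N$ queries; your choice would in fact survive up to $n-1$ queries of each kind, though the statement does not need this.
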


\begin{proof}
Since $N<n$, choose a unit $v$ orthogonal to $x_1,\dots,x_N$ and put $B_\mu=B+\mu vv^{*}$. Then
$B_\mu x_i=Bx_i$ and $B_\mu^{*}x_i=B^{*}x_i$ for every $i$, so by induction on the queries the
algorithm receives the same answers on $B$ and on $B_\mu$ and returns the same number, while
$\norm{B_\mu}\ge\mu-\norm B\to\infty$.
\end{proof}

When $B=P_\perp AP_\perp$ is the far block of a known frame, the same argument with $v\in\ran
P_\perp$ orthogonal to the queries shows that $n-k$ queries are necessary, and $n-k$ applications
along a basis of $\ran P_\perp$ determine $B$. The argument applies to any finite number of queries
on an infinite-dimensional space.

\begin{remark}\label{rem:positive}
Known structure can break Proposition~\ref{prop:floor}: for $B\ge0$ entrywise,
$\norm B^2\le\max_i(B^{*}Bx)_i/x_i$ for every $x>0$, the bound of Collatz, at two applications. It does
not help here, the far block being small through cancellation and not nonnegative, so the bound has
to be taken on $\abs{G_{22}}$, which needs every column of $G_{22}$, at $n$ applications of the
operator, and it never falls below
$\norm{\,\abs{G_{22}}\,}$.
\end{remark}

\begin{proposition}\label{prop:gelfand}
Let $\tnorm{\cdot}$ be any submultiplicative matrix norm and $H=B^{*}B$. Then $\norm{B}\le\tnorm{H^{m}}^{1/2m}$
for every $m\ge1$, and $\tnorm{H^{m}}^{1/2m}\to\norm{B}$.
\end{proposition}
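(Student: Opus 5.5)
The plan is to prove the inequality first and the limit second, working in the finite-dimensional space $\C^n$ of this subsection.

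\emph{The inequality.} Since $H=B^{*}B$ is Hermitian and positive semidefinite, its spectral radius is $\rho(H)=\lambda_{\max}(H)=\norm{H}=\norm{B}^2$. The power $H^m$ is also Hermitian, with $\rho(H^m)=\rho(H)^m=\norm{B}^{2m}$. For any submultiplicative norm, the spectral radius is at most the norm, $\rho(M)\le\tnorm{M}$. To see this, let $Mx=\lambda x$ with $x\ne0$ and form the matrix $X=[x\,|\,0\,|\cdots|\,0]$. Then $MX=\lambda X$, so $\abs\lambda\tnorm X=\tnorm{MX}\le\tnorm M\tnorm X$, and $\tnorm X>0$. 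Applying this with $M=H^m$ gives $\norm{B}^{2m}\le\tnorm{H^m}$, and taking $2m$-th roots yields the claimed bound.

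\emph{The limit.} All norms on the finite-dimensional space of $n\times n$ matrices are equivalent, so there is a constant $c>0$ with $\tnorm{M}\le c\norm{M}$ for all $M$. Hence
\[
\tnorm{H^m}^{1/2m}\le c^{1/2m}\norm{H^m}^{1/2m}=c^{1/2m}\norm{H}^{1/2}=c^{1/2m}\norm{B}.
\]
The middle equality holds because $H$ is normal, so $\norm{H^m}=\norm{H}^m$ with no appeal to Gelfand's formula. As $m\to\infty$ we have $c^{1/2m}\to1$, and combined with the lower bound from the first part this squeezes $\tnorm{H^m}^{1/2m}$ to $\norm B$. If $B=0$, every term is zero and the statement is trivial.

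There is no real obstacle. The only point needing care is that a submultiplicative norm need not be induced by a vector norm. The matrix $X$ with $x$ in its first column handles $\rho\le\tnorm\cdot$ in that case, and norm equivalence holds for every norm on $\C^{n\times n}$. This is the step where finite dimension is used. It is enough here because the subsection works in $\C^n$, and the trace norms $(\tr H^m)^{1/2m}$ used later are Schatten norms of $H^m$, which are submultiplicative.
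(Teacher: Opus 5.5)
Your proof is correct, but it takes a different route from the paper's. The paper starts from Gelfand's formula $\lim_m\tnorm{X^{m}}^{1/m}=\rho(X)$ for an arbitrary matrix $X$, obtained through norm equivalence and a citation to Kato. It then uses submultiplicativity in the form $\tnorm{X^{jm}}^{1/jm}\le\tnorm{X^{m}}^{1/m}$ and lets $j\to\infty$ to get $\rho(X)\le\tnorm{X^{m}}^{1/m}$. Only at the end does it put $X=H$.

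You avoid the spectral-radius formula altogether. The eigenvector-padding matrix gives $\rho(M)\le\tnorm{M}$ in one line, and the limit follows from norm equivalence together with $\norm{H^{m}}=\norm{H}^{m}$, which holds because $H$ is Hermitian.

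What your route buys:
\begin{itemize}
\item It is elementary and self-contained.
\item It is quantitative. The squeeze $\norm{B}\le\tnorm{H^{m}}^{1/2m}\le c^{1/2m}\norm{B}$ gives an explicit rate. For the trace norm one may take $c=n$, or $c=r=\rank B$ on matrices of rank at most $r$, and this recovers the factor $r^{1/2m}$ of Proposition~\ref{prop:ladder} (with $n$ as a substitute for $r$).
\end{itemize}

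What the paper's route buys: its argument holds for every $X$. That is what supports the remark right after the proposition. Applied to $B$ itself, the same reasoning converges to $\rho(B)$, and it is the normality of $H=B^{*}B$ that makes the limit equal $\norm{B}$. That normality is exactly the fact your limit step relies on.
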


\begin{proof}
As $H\succeq0$, $\rho(H)=\lambda_{\max}(H)=\norm{B}^{2}$. All matrix norms are equivalent, so
$\lim_m\tnorm{X^{m}}^{1/m}=\rho(X)$~\cite[III-(6.13)]{Kato1995}, and submultiplicativity gives
$\tnorm{X^{jm}}^{1/jm}\le\tnorm{X^{m}}^{1/m}$, so $\rho(X)\le\tnorm{X^{m}}^{1/m}$ for every $m$ with
equality in the limit. Take $X=H$ and square roots.
\end{proof}

Applied to $B$ itself the same limit bounds the spectral radius, and for non-normal $B$ the sequence
$\tnorm{B^{k}}^{1/k}$ converges to $\rho(B)\le\norm{B}$, so the inequality fails; the passage through
$H=B^{*}B$ is what avoids that.

\begin{proposition}\label{prop:ladder}
Let $B\ne0$, $H=B^{*}B$, $r=\rank B$, and $\phi_m:=\tr H^{m}/\sv_1^{2m}\in[1,r]$. Then for every integer
$m\ge1$
\begin{equation}\label{eq:ladder}
  \norm{B}\;\le\;\big(\tr H^{m}\big)^{1/2m}\;=\;\phi_m^{1/2m}\,\norm{B}
  \;\le\;r^{1/2m}\,\norm{B} ,
\end{equation}
and the middle term is nonincreasing in $m$, converging to $\norm{B}$.
\end{proposition}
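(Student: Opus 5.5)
The plan is to diagonalise $H=B^{*}B$ and argue entirely on its eigenvalues. Since $H$ is Hermitian positive semidefinite with $\rank H=\rank B=r$, its eigenvalues are $\sv_1^2\ge\dots\ge\sv_r^2>0$, followed by $n-r$ zeros. This gives $\tr H^{m}=\sum_{i=1}^{r}\sv_i^{2m}$ for every integer $m\ge1$, and $\sv_1=\norm B>0$ because $B\ne0$. Writing $t_i=(\sv_i/\sv_1)^2\in(0,1]$ with $t_1=1$, we get $\phi_m=\sum_{i\le r}t_i^{m}$.

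First step: the bounds on $\phi_m$ and the chain \eqref{eq:ladder}. The term $t_1=1$ gives $\phi_m\ge1$, and $t_i\le1$ gives $\phi_m\le r$. The equality $(\tr H^{m})^{1/2m}=\phi_m^{1/2m}\norm B$ is the definition of $\phi_m$. Taking $2m$-th roots of $1\le\phi_m\le r$ then yields both outer inequalities.

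Second step: convergence. Since $1\le\phi_m^{1/2m}\le r^{1/2m}$ and $r^{1/2m}\to1$, the middle term converges to $\norm B$. Alternatively, Proposition~\ref{prop:gelfand} with the trace norm gives this directly, but the squeeze is shorter.

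Third step: monotonicity, which is the only part needing any care. I would show $(\sum_i \sv_i^{2(m+1)})^{1/(m+1)}\le(\sum_i \sv_i^{2m})^{1/m}$, which is the monotonicity of $\ell^p$ norms in $p$ applied to the vector $(\sv_i^2)$ with $p=m<m+1$. For a self-contained argument, normalise so that $\sum_i a_i^{m}=1$ with $a_i=\sv_i^2$. Then each $a_i\le1$, so $a_i^{m+1}\le a_i^{m}$, and hence $\sum_i a_i^{m+1}\le1$. Taking the $(m+1)$-th root gives $\norm a_{m+1}\le1=\norm a_m$, and homogeneity extends this to the general case. A final square root gives that $(\tr H^{m})^{1/2m}=\norm a_m^{1/2}$ is nonincreasing. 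I expect no real obstacle here; the main point is to get the exponent bookkeeping right, namely $1/2m$ applied to $\tr H^{m}$, which is $\norm{a}_m^{1/2}$.
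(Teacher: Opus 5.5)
Your proposal is correct and follows the paper's proof: write $\tr H^{m}=\sum_i\sv_i^{2m}$, read \eqref{eq:ladder} as $\sv_1^{2m}\le\sum_i\sv_i^{2m}\le r\,\sv_1^{2m}$, and get monotonicity from that of the $\ell^p$ norms of the singular values. You also spell out two steps the paper leaves implicit: the squeeze that gives the limit, and a self-contained proof of the $\ell^p$ monotonicity.
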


\begin{proof}
Since $\tr H^m=\sum_i\sv_i^{2m}$, the middle term is $\norm{B}_{S_{2m}}$ and \eqref{eq:ladder} reads
$\sv_1^{2m}\le\sum_i\sv_i^{2m}\le r\sv_1^{2m}$; monotonicity in $m$ is that of the $\ell^p$ norms of
$(\sv_i)$.
\end{proof}

Each $m$ is a certificate on its own, so the algorithm may raise $m$ adaptively and stop at the first
order clearing its target; the factor $r^{1/2m}$ enters only when predicting how large $m$ must be,
where $n$ is a valid substitute for $r$. The order $m=1$ uses $n$ applications, against the $n-k$
that Proposition~\ref{prop:floor} shows to be necessary for the far block, the sweep running over the
canonical basis rather than a basis of $\ran P_\perp$; the higher orders spend further applications
on accuracy and not on certification.

For $m=2p$ even, $H^{p}$ is Hermitian and
\begin{equation}\label{eq:sweep}
  \tr H^{2p}\;=\;\norm{H^{p}}_F^{2}\;=\;\sum_{j=1}^{n}\norm{H^{p}e_j}^{2},
\end{equation}
and for $m=2p+1$ odd, $\tr H^{2p+1}=\sum_j\norm{BH^{p}e_j}^{2}$, one further application per column.
Every order is a sum of squared norms at $m$ applications of $B$ or $B^{*}$ per column. Writing it
this way rather than as $\sum_j\langle e_j,H^{m}e_j\rangle$, which is the same number, halves the
applications and replaces a signed sum by a sum of nonnegative terms, each bounded from above in
Section~\ref{sec:fpsweep}. If $B=P_\perp AP_\perp$ then $H$ vanishes on $(\ran P_\perp)^{\perp}$ and
the trace over $\C^n$ equals the trace over $\ran P_\perp$, so the sweep runs over the canonical basis
and no basis of $\ran P_\perp$ is constructed.

The first vector of every column chain is $Be_j$, and from these the
interpolation bound is accumulated at no further application: since $\norm
B^2=\rho(B^{*}B)\le\norm{B^{*}B}_1\le\norm B_\infty\norm B_1$,
\begin{equation}\label{eq:interp}
  \norm{B}\;\le\;\sqrt{\norm{B}_1\,\norm{B}_\infty},\qquad
  \norm{B}_1=\max_j\norm{Be_j}_1,\quad
  \norm{B}_\infty=\Big\|\sum_j\abs{Be_j}\Big\|_\infty ,
\end{equation}
and the same holds at every order of the chain. Let $M_t$ be the matrix whose $j$-th column is the
$t$-th vector of the chain of column $j$, so that $M_{2i}=H^{i}$ and $M_{2i+1}=BH^{i}$; then
$\norm{M_{2i}}=\norm B^{2i}$ and $M_{2i+1}^{*}M_{2i+1}=H^{2i+1}$, so $\norm{M_t}=\norm B^{t}$ for every
$t$, and \eqref{eq:interp} applied to $M_t$ gives
\begin{equation}\label{eq:interpt}
  \norm B\;\le\;\bigl(\norm{M_t}_1\,\norm{M_t}_\infty\bigr)^{1/2t},
\end{equation}
again at no further application, since the column sums and the accumulated row sums of the vectors of
order $t$ are formed as the vectors are computed. The certificate is
\begin{equation}\label{eq:mincert}
  \norm{B}\;\le\;\min_{1\le t\le m}\min\Big\{\bigl(\norm{M_t}_1\,\norm{M_t}_\infty\bigr)^{1/2t},\
  \big(\tr H^{t}\big)^{1/2t}\Big\},
\end{equation}
at an order $m$ fixed in advance, $8$ in Section~\ref{sec:numerics}; the order needs no certificate,
since \eqref{eq:ladder} and \eqref{eq:interpt} hold for every order. At $t=1$ the interpolation bound
is exact on an entrywise nonnegative matrix whose row sums and column sums all equal one constant.

\subsection{The sweep in floating point}\label{sec:fpsweep}

From here $B$ is the far block $G_{22}=P_\perp AP_\perp$ of a frame $Q\in\C^{n\times k}$ and the
order $m=2p$ is even; the odd orders need one more application of $B$ and the same argument. The
sweep costs $mn$ applications of the operator in $O(mn)$ memory, three vectors at a time and, for the
interpolation bound, one accumulated vector of row sums per order, the run for order $m$ returning
every lower order with it; the projector adds the $nk$ numbers of the
frame and $O(mn^{2}k)$ flops, of the same order as the $O(mn^{2}w)$ flops of the applications when $k$ is
comparable to the stencil width $w$ of a sparse $A$, and the $n$ columns are independent tasks that
communicate only in the final sum. It is the only step whose cost is $n$ applications of the operator
or more, so it runs in floating point and what has to be certified is its rounding.

\begin{remark}\label{rem:ballfails}
Ball arithmetic encloses $By$ through $\abs{P_\perp}\abs A\abs{P_\perp}$, which does not see the
cancellation that makes the far block small, so relative to the vectors the radii can grow by a
factor up to about $\norm A/\norm B$ at each application, and the smaller $\norm B$ is against
$\norm A$, the wider the enclosure. We propagate no enclosure at all.
\end{remark}

The one thing assumed about the operator is a constant $\varepsilon_A$ with
\begin{equation}\label{eq:roundA}
  \norm{\fl(Ay)-Ay}\le\varepsilon_A\norm A\norm y,\qquad
  \norm{\fl(A^{*}y)-A^{*}y}\le\varepsilon_A\norm A\norm y
\end{equation}
for every $y$; the value of $\norm A$ is not assumed, and is certified together with $\norm B$. For a
stored matrix with at most $w$ nonzeros in each row and column,
$\abs{\fl(Ay)-Ay}\le u_w\abs A\abs y$ componentwise with $u_w=wu/(1-wu)$, by the bound for inner
products of length at most $w$~\cite[(3.11)--(3.12)]{Higham2002}, where it is stated for dense products,
and $\norm{\,\abs A\,}\le\sqrt w\,\norm A$, so $\varepsilon_A=\sqrt w\,u_w$. A bound of this kind
holds when $Ay$ is formed as a sum of its terms and fails when it is formed as a difference of much
larger operators; the far block is of the second kind, which is why the argument closes on $\norm A$
and not on $\norm B$.

We apply the projector as $y-Q\fl(Q^{*}y)$, with no solve. With $\alpha\ge\norm{I-Q^{*}Q}$
as in Section~\ref{sec:almostortho}, writing $Q=Z(Q^{*}Q)^{1/2}$ with $Z^{*}Z=I$ gives
$QQ^{*}-ZZ^{*}=Z(Q^{*}Q-I)Z^{*}$, so $QQ^{*}$ is at distance at most $\alpha$ from $I-P_\perp$. With
$\norm Q^{2}\le1+\alpha$, $\norm{\,\abs Q\,}\le\sqrt k\,\norm Q$~\cite[Lem.~6.6]{Higham2002} and the
componentwise bound for inner products of lengths $n$ and $k$,
\begin{equation}\label{eq:epsP}
  \norm{\fl\bigl(y-Q\fl(Q^{*}y)\bigr)-P_\perp y}\le\varepsilon_P\norm y,\qquad
  \varepsilon_P=\alpha+u+(1+u)(1+\alpha)\sqrt k\,\bigl(u_n+u_k(1+\sqrt k\,u_n)\bigr),
\end{equation}
so the defect of the frame enters additively and the rounding as a term bounded by a constant times
$(n+k)\sqrt k\,u$.

These bounds assume no underflow. With gradual underflow each product carries a further absolute
error of at most half of $\eta$, the smallest positive subnormal number, so each rounding constant
acquires an absolute term: \eqref{eq:roundA} becomes
$\norm{\fl(Ay)-Ay}\le\varepsilon_A\norm A\norm y+\eta_A$, with $\eta_A$ a multiple of $\eta$ for a
stored matrix, and \eqref{eq:epsP}, Lemma~\ref{lem:onestep} and the bounds on norms below acquire
the corresponding terms. Their contributions to $\delta_{j,t}$ have nonnegative coefficients and
degree at most $t$ in $(a,g)$, so the homogeneity in the proof of Proposition~\ref{prop:fpchecks}
still holds, and we omit them from the displayed formulas other than \eqref{eq:normbound}.
Overflow is excluded by refusing every result in which an intermediate is not finite. For complex
data every $u_\ell=\ell u/(1-\ell u)$ is read as $u_{\ell+2}$, a complex product having relative error
at most $\sqrt2\,u_2\le u_3$ and a complex sum at most $u$~\cite[Lem.~3.5]{Higham2002}, where
$u_\ell$ is written $\gamma_\ell$. And the norms of the computed vectors are themselves bounded from
above in floating point: with $\hat s$ the sum $\sum_i\abs{x_i}^2$ computed in rounding to nearest,
\begin{equation}\label{eq:normbound}
  \norm x^{2}\;\le\;\frac{\hat s}{1-u_n}+2n\eta ,
\end{equation}
the division by $1-u_n$, the addition and the square root taken for $\norm x$ being rounded upward.

\begin{lemma}\label{lem:onestep}
Let $\fl(By)=\fl(P_\perp\fl(A\fl(P_\perp y)))$ with each projection computed as in~\eqref{eq:epsP},
and let $\norm A\le a$, $\norm{G_{12}}\le\gamma_1$, $\norm{G_{21}}\le\gamma_2$ and
$\norm{G_{22}}\le g$. Then for every $y$
\[
  \norm{\fl(By)-By}\le \varepsilon_B\norm y,\qquad
\varepsilon_B=\varepsilon_P\sqrt{\gamma_2^2+g^2}+\varepsilon_Aa(1+\varepsilon_P)
 +\varepsilon_P\Bigl(\sqrt{\gamma_1^2+g^2}+a\bigl(\varepsilon_P+\varepsilon_A(1+\varepsilon_P)\bigr)\Bigr),
\]
and the same holds for $B^{*}$ with $\gamma_1$ and $\gamma_2$ exchanged.
\end{lemma}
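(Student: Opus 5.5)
The plan is to unroll the three rounded operations and charge each rounding error to the exact operator that acts on it afterwards, using the block norms rather than $a$ wherever the exact factor is $P_\perp A$ or $AP_\perp$. Write $y_1=\fl(P_\perp y)=P_\perp y+e_1$, $y_2=\fl(Ay_1)=Ay_1+e_2$ and $y_3=\fl(P_\perp y_2)=P_\perp y_2+e_3=\fl(By)$. By \eqref{eq:epsP}, $\norm{e_1}\le\varepsilon_P\norm y$ and $\norm{e_3}\le\varepsilon_P\norm{y_2}$. By \eqref{eq:roundA}, $\norm{e_2}\le\varepsilon_A a\norm{y_1}$. Expanding,
\[
  y_3-By \;=\; P_\perp Ae_1+P_\perp e_2+e_3 ,
\]
since $P_\perp AP_\perp y=By$. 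The proof then bounds the three terms in turn.

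For the first term, I use the block form of $P_\perp A$. Splitting $x=Qx_1+x_2$, one has $P_\perp Ax\cong G_{21}x_1+G_{22}x_2$. By the triangle and Cauchy--Schwarz inequalities this has norm at most $(\gamma_2\norm{x_1}+g\norm{x_2})\le\sqrt{\gamma_2^2+g^2}\,\norm x$. Hence $\norm{P_\perp Ae_1}\le\varepsilon_P\sqrt{\gamma_2^2+g^2}\,\norm y$, which is the first summand of $\varepsilon_B$.

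For the second term, $\norm{P_\perp}\le1$ and $\norm{y_1}\le\norm{P_\perp y}+\norm{e_1}\le(1+\varepsilon_P)\norm y$. This gives $\norm{P_\perp e_2}\le\varepsilon_A a(1+\varepsilon_P)\norm y$, the second summand.

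For the third term, I bound $\norm{y_2}\le\norm{AP_\perp y}+\norm{Ae_1}+\norm{e_2}$. Here $AP_\perp$ has block columns $G_{12},G_{22}$, and the same Cauchy--Schwarz argument applied to $AP_\perp y\cong G_{12}x_2+G_{22}x_2$ in the two orthogonal components gives $\norm{AP_\perp y}\le\sqrt{\gamma_1^2+g^2}\,\norm y$. The remaining pieces are $\norm{Ae_1}\le a\varepsilon_P\norm y$ and $\norm{e_2}\le\varepsilon_Aa(1+\varepsilon_P)\norm y$. Multiplying the sum by $\varepsilon_P$ yields exactly the bracketed last summand of $\varepsilon_B$.

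For $B^{*}=P_\perp A^{*}P_\perp$ the same chain applies with $A^{*}$, whose off-diagonal blocks are $G_{21}^{*}$ and $G_{12}^{*}$. This exchanges $\gamma_1$ and $\gamma_2$, and \eqref{eq:roundA} holds for $A^{*}$ with the same constant.

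The only point needing care is that $P_\perp$ in \eqref{eq:epsP} is the exact orthogonal projector even when $Q$ is only nearly orthonormal. The blocks $G_{ij}$, and the identities $P_\perp AP_\perp=B$ and $\norm{P_\perp}\le1$, must all refer to that same projector. The defect $\alpha$ is already absorbed into $\varepsilon_P$, so nothing else changes. I expect this bookkeeping to be the main obstacle, rather than any estimate.
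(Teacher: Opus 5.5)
Your proof is correct and matches the paper's argument step for step. It uses the same three-error decomposition $y_3-By=P_\perp Ae_1+P_\perp e_2+e_3$, the bounds $\norm{P_\perp A}\le\sqrt{\gamma_2^2+g^2}$ and $\norm{AP_\perp}\le\sqrt{\gamma_1^2+g^2}$, and the same estimate of $\norm{y_2}$, with the exchange of $\gamma_1,\gamma_2$ for $B^{*}$. The one small mislabel is that the bound on $AP_\perp$ is Pythagoras over the orthogonal ranges $\ran Q$ and $\ran P_\perp$ (as the paper puts it), not the Cauchy--Schwarz step you used for $P_\perp A$.
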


\begin{proof}
Write $y_1=\fl(P_\perp y)=P_\perp y+f_1$, $y_2=\fl(Ay_1)=Ay_1+f_2$ and $y_3=\fl(P_\perp
y_2)=P_\perp y_2+f_3$, so $y_3-By=P_\perp Af_1+P_\perp f_2+f_3$. With $\Pi=I-P_\perp$, the operator
$P_\perp A=P_\perp A\Pi+P_\perp AP_\perp$ acts on the orthogonal summands $\ran\Pi$ and $\ran
P_\perp$, so $\norm{P_\perp A}\le\sqrt{\gamma_2^2+g^2}$ by Cauchy--Schwarz; the two terms of
$AP_\perp$ have orthogonal ranges, so $\norm{AP_\perp}\le\sqrt{\gamma_1^2+g^2}$. Hence
$\norm{P_\perp Af_1}\le\varepsilon_P\sqrt{\gamma_2^2+g^2}\norm y$,
$\norm{f_2}\le\varepsilon_Aa(1+\varepsilon_P)\norm y$, and $\norm{f_3}\le\varepsilon_P\norm{y_2}$
with $\norm{y_2}\le\bigl(\sqrt{\gamma_1^2+g^2}+a\varepsilon_P+\varepsilon_Aa(1+\varepsilon_P)\bigr)\norm y$.
\end{proof}

Only the application of $A$ carries $\norm A$; the errors of the projector are measured by the couplings and the
far block. For column $j$ let $w_0=e_j$, $v_i=\fl(Bw_i)$, $w_{i+1}=\fl(B^{*}v_i)$, let
$\varepsilon_B$ be the larger of the two constants of Lemma~\ref{lem:onestep}, and $g\ge\norm B$.
Then $\norm{w_{i+1}-Hw_i}\le\varepsilon_B(g\norm{w_i}+\norm{v_i})$, and by induction
\[
  \norm{H^{p}e_j-w_p}\le \varepsilon_B\,E_j(g),\qquad
  E_j(g)=\sum_{i=0}^{p-1}g^{2(p-1-i)}\bigl(g\norm{w_i}+\norm{v_i}\bigr).
\]
Let $\bar s(a)\ge\norm{G_{11}}$, $\bar\gamma_1(a)\ge\norm{G_{12}}$ and
$\bar\gamma_2(a)\ge\norm{G_{21}}$ be the bounds computed from $\fl(AQ)$ and $\fl(A^{*}Q)$ as in
Section~\ref{sec:coupling}, valid whenever $\norm A\le a$; each is a certified number plus $a$ times
a rounding constant, and for a frame that is not orthonormal $\bar s$ carries the factor
$1/(1-\alpha)$ of the proof of Lemma~\ref{lem:homotopy} and $\bar\gamma_1$, $\bar\gamma_2$ the factor
$1/\sqrt{1-\alpha}$ of Proposition~\ref{prop:almostortho}. With $\varepsilon_B(a,g)$ the constant
at $\bar\gamma_i(a)$, $c_j\ge\norm{w_p}$, $N_1\ge\max_j\norm{v_0}_1$ and
$N_\infty\ge\norm{\sum_j\abs{v_0}}_\infty$, \eqref{eq:ladder}, \eqref{eq:sweep} and
\eqref{eq:interp} give, for $a\ge\norm A$ and $g\ge\norm B$,
\begin{equation}\label{eq:FI}
  \begin{aligned}
  \norm B^{4p}&\le F_a(g):=\sum_j\bigl(c_j+\varepsilon_B(a,g)E_j(g)\bigr)^{2},\\
  \norm B^{2}&\le I_a(g):=\bigl(N_1+\sqrt n\,\varepsilon_B(a,g)\bigr)\bigl(N_\infty+n\,\varepsilon_B(a,g)\bigr).
  \end{aligned}
\end{equation}
At a higher order the rounding of the whole chain enters. Let $x_{j,0}=e_j$ and let $x_{j,t}$ be the
$t$-th computed vector of the chain of column $j$, so that $x_{j,2i+1}=v_i$ and $x_{j,2i+2}=w_{i+1}$,
and let $\nu_{j,t}\ge\norm{x_{j,t}}$. Lemma~\ref{lem:onestep} at each application and $\norm B\le g$
give $\norm{x_{j,t}-M_te_j}\le\varepsilon_B\nu_{j,t-1}+g\norm{x_{j,t-1}-M_{t-1}e_j}$, hence
\[
  \norm{x_{j,t}-M_te_j}\le\delta_{j,t}(a,g):=\varepsilon_B(a,g)\sum_{s=0}^{t-1}g^{t-1-s}\nu_{j,s}.
\]
Since $\norm{M_t}_F^{2}=\tr H^{t}$, \eqref{eq:ladder} gives at every order
\begin{equation}\label{eq:Ft}
  \norm B^{2t}\le F_{a,t}(g):=\sum_j\bigl(\nu_{j,t}+\delta_{j,t}(a,g)\bigr)^{2},
\end{equation}
and at $t=2p$ it bounds $F_a(g)$ from above, with $c_j=\nu_{j,2p}$ and the norms in $E_j$ replaced by
their bounds $\nu_{j,s}$.
Since the $\ell^1$ norm of a vector is at most $\sqrt n$ times its Euclidean norm and the
$\ell^\infty$ norm at most its Euclidean norm, \eqref{eq:interpt} gives, with
$N_{1,t}\ge\max_j\norm{x_{j,t}}_1$ and $N_{\infty,t}\ge\norm{\sum_j\abs{x_{j,t}}}_\infty$,
\begin{equation}\label{eq:FIt}
  \norm B^{2t}\le I_{a,t}(g):=\Bigl(N_{1,t}+\sqrt n\max_j\delta_{j,t}(a,g)\Bigr)
  \Bigl(N_{\infty,t}+\sum_j\delta_{j,t}(a,g)\Bigr),
\end{equation}
and at $t=1$, where $\delta_{j,1}=\varepsilon_B$, this is $I_a(g)$.

\begin{proposition}\label{prop:fpchecks}
Suppose that for a pair $(\bar a,\bar g)$ of positive numbers
\[
  \text{(i)}\ \ F_{\bar a,t}(\bar g)<\bar g^{2t}\ \text{ or }\ I_{\bar a,t}(\bar g)<\bar g^{2t}\
  \text{for some }t\le m,\qquad
  \text{(ii)}\ \ \left\|\begin{pmatrix}\bar s(\bar a)&\bar\gamma_1(\bar a)\\
  \bar\gamma_2(\bar a)&\bar g\end{pmatrix}\right\|<\bar a .
\]
Then $\norm A<\bar a$ and $\norm{G_{22}}<\bar g$.
\end{proposition}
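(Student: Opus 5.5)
The plan is a continuation argument in a single scaling parameter. The bounds $F_{a,t}$, $I_{a,t}$, $\bar s(a)$, $\bar\gamma_1(a)$, $\bar\gamma_2(a)$ are valid only when $a\ge\norm A$ and $g\ge\norm B$. The checks are made at one pair $(\bar a,\bar g)$ not yet known to satisfy this, so the circularity has to be broken. I would consider the ray $(\lambda\bar a,\lambda\bar g)$ for $\lambda\ge1$ and proceed in three steps: show that the two checks survive along the whole ray, note that the pair is valid for $\lambda$ large, and then push $\lambda$ down to $1$.

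\emph{Step 1 (subhomogeneity).} I would show that for $\lambda\ge1$
\[
F_{\lambda\bar a,t}(\lambda\bar g)\le\lambda^{2t}F_{\bar a,t}(\bar g),\qquad I_{\lambda\bar a,t}(\lambda\bar g)\le\lambda^{2t}I_{\bar a,t}(\bar g).
\]
\begin{itemize}[nosep]
\item The numbers $\nu_{j,t}$, $N_{1,t}$ and $N_{\infty,t}$ are computed quantities and do not depend on $(a,g)$.
\item Each $\bar\gamma_i(a)$ and $\bar s(a)$ is a certified number plus $a$ times a nonnegative constant. It is therefore affine with nonnegative coefficients, so its value at $\lambda a$ is at most $\lambda$ times its value at $a$.
\item It follows that $\sqrt{\bar\gamma_i(\lambda a)^2+(\lambda g)^2}\le\lambda\sqrt{\bar\gamma_i(a)^2+g^2}$. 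Every term of $\varepsilon_B$ in Lemma~\ref{lem:onestep} is then at most $\lambda$ times its value, and so $\varepsilon_B(\lambda a,\lambda g)\le\lambda\,\varepsilon_B(a,g)$. The underflow terms have nonnegative coefficients and degree at most $t$, as the text notes, so they do not spoil this.
\item Hence $\delta_{j,t}(\lambda a,\lambda g)\le\lambda^{t}\delta_{j,t}(a,g)$, since $g^{t-1-s}$ contributes a factor $\lambda^{t-1-s}\le\lambda^{t-1}$.
\item Because $\lambda\ge1$ and $\nu_{j,t}\ge0$, we get $\nu_{j,t}+\lambda^t\delta_{j,t}\le\lambda^t(\nu_{j,t}+\delta_{j,t})$. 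Squaring and summing gives the claim for $F$, and the same reasoning gives it for $I$.
\end{itemize}
Check (i) therefore holds at every $\lambda\ge1$ with $\bar g^{2t}$ replaced by $(\lambda\bar g)^{2t}$. The $2\times2$ matrix in (ii) has nonnegative entries, each at most $\lambda$ times its value at $\lambda=1$. The spectral norm is monotone in nonnegative entries, so check (ii) holds at $\lambda$ with $\lambda\bar a$ on the right.

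\emph{Step 2 (validity implies strictness).} Let
\[
\Lambda=\{\lambda\ge1:\ \lambda\bar a\ge\norm A,\ \lambda\bar g\ge\norm B\}.
\]
This set is nonempty and closed, so it has a minimum $\lambda_*$. At $\lambda_*$ all bounds are valid.
\begin{itemize}[nosep]
\item By \eqref{eq:Ft} or \eqref{eq:FIt} and Step 1, $\norm B^{2t}\le F_{\lambda_*\bar a,t}(\lambda_*\bar g)<(\lambda_*\bar g)^{2t}$, or the same with $I$. Hence $\norm B<\lambda_*\bar g$.
\item For $\norm A$, I would use the standard block estimate: $\norm A$ is at most the spectral norm of the $2\times2$ matrix of block norms $\norm{G_{ij}}$. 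This follows by applying Cauchy--Schwarz to $x=Qx_1+x_2$, since the block decomposition is a unitary similarity.
\item Monotonicity in the entries, together with validity of $\bar s$, $\bar\gamma_1$, $\bar\gamma_2$ at $\lambda_*\bar a$ and the bound $\norm{G_{22}}\le\lambda_*\bar g$, then gives $\norm A<\lambda_*\bar a$ by (ii) at $\lambda_*$.
\end{itemize}

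\emph{Step 3 (conclusion).} Both inequalities at $\lambda_*$ are strict. If $\lambda_*>1$, a slightly smaller $\lambda\ge1$ would still lie in $\Lambda$, contradicting minimality. So $\lambda_*=1$, and Step 2 at $\lambda=1$ gives the two strict inequalities claimed.

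The main obstacle is Step 1: checking carefully that every term entering $\varepsilon_B$ and $\delta_{j,t}$ is subhomogeneous of the right degree. This includes the frame-defect factors $1/(1-\alpha)$ and $1/\sqrt{1-\alpha}$, which are independent of $(a,g)$, and the constants in $\varepsilon_P$ and $\varepsilon_A$, which are also independent of $(a,g)$. Once that is in place, Steps 2 and 3 are routine.
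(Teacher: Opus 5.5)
Your proposal is correct and is essentially the paper's proof. It uses the same subhomogeneity of $F_{a,t}$, $I_{a,t}$ and of the matrix in (ii) under $(a,g)\mapsto(\lambda a,\lambda g)$ for $\lambda\ge1$, the same block-norm bound $\norm A\le\bigl\|[\norm{G_{ij}}]_{i,j}\bigr\|$, and the same contradiction at $\lambda=\max(\norm A/\bar a,\norm{G_{22}}/\bar g)$; the paper splits into two cases by which ratio attains the maximum, while you phrase it as minimality of $\lambda_*$ with both inequalities strict there.
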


\begin{proof}
For $\lambda\ge1$, $F_{\lambda a,t}(\lambda g)\le\lambda^{2t}F_{a,t}(g)$ and $I_{\lambda a,t}(\lambda
g)\le\lambda^{2t}I_{a,t}(g)$, since $\varepsilon_B(\lambda a,\lambda g)\le\lambda\varepsilon_B(a,g)$,
every term of $\delta_{j,t}$ has degree at most $t-1$ in $g$, and $\bar s$, $\bar\gamma_i$ are affine in $a$
with nonnegative coefficients, which also gives $M(\lambda a,\lambda g)\le\lambda M(a,g)$ entrywise
for the matrix $M$ of (ii). Let $\lambda=\max(\norm A/\bar a,\norm B/\bar g)$ and suppose
$\lambda\ge1$. If $\lambda=\norm A/\bar a$ then $\norm B\le\lambda\bar g$, and since $A$ is unitarily
similar to its block form and the norm is monotone in the moduli of the
entries~\cite[Lem.~6.6]{Higham2002}, $\norm A\le\norm{M(\lambda\bar a,\lambda\bar
g)}\le\lambda\norm{M(\bar a,\bar g)}<\lambda\bar a=\norm A$. Otherwise $\norm B=\lambda\bar g$ and
$\norm A\le\lambda\bar a$, and \eqref{eq:Ft} and \eqref{eq:FIt} at $(\lambda\bar a,\lambda\bar g)$
make one of the right-hand sides of (i) smaller than $\norm B^{2t}$. Both cases are
contradictions. The step from the blocks to $A$ uses
$\norm A\le\bigl\|\bigl[\norm{G_{ij}}\bigr]_{i,j}\bigr\|$, obtained by applying $A$ to $Qx_1+x_2$ and
bounding each block by its norm, before the monotonicity in the moduli of the entries.
\end{proof}

\begin{remark}\label{rem:impl}
The norm $\norm{H^{p}e_j}$ scales like $\norm{H}^{p}$ and overflows or underflows in binary64 as $m$
grows. The sweep therefore runs on $cA$, with $c$ the power of two that brings the least of the
floating-point estimates $v_t$ of $\norm{M_t}^{1/t}=\norm B$, $t\le m$, computed from eight Gaussian
vectors after eight warm-up applications, into $[1,2)$; scaling by a power of two is exact in binary
floating point apart from underflow, which the absolute terms cover, and the scale is unwound with
the final root. When the order is not fixed in advance it is chosen from the predicted bounds
$v_t(n-k)^{1/2t}$. The same device, with an explicit
runtime refusal when an intermediate is not finite, is what makes the dense iteration
of~\cite{Murray2026} executable.
\end{remark}

\begin{algorithm}[t]
\caption{The sweep of order $2p$ in floating point}\label{alg:fpsweep}
\begin{algorithmic}[1]
\Require $y\mapsto\fl(Ay)$ and $y\mapsto\fl(A^{*}y)$ with $\varepsilon_A$ as in~\eqref{eq:roundA};
  the frame $Q$ with $\alpha$ and $\varepsilon_P$ as in~\eqref{eq:epsP}; $\bar s(a)$,
  $\bar\gamma_1(a)$, $\bar\gamma_2(a)$ as functions of $a$
\For{$j=1,\dots,n$}
  \State $w_0\gets e_j$
  \For{$i=0,\dots,p-1$}
    \State $v_i\gets\fl(Bw_i)$, $w_{i+1}\gets\fl(B^{*}v_i)$ \Comment{rescaled as in Remark~\ref{rem:impl}}
  \EndFor
  \State add to running sums the products $\nu_{j,s}\nu_{j,s'}$ for $s,s'\le 2p$, and the $\nu_{j,s}$,
  rounded upward
  \State update $\max_j\nu_{j,s}$, $N_{1,s}$ and the vectors $\sum_j\abs{x_{j,s}}$ for $s\le 2p$,
  rounded upward
\EndFor
\State $\bar g\gets$ the least number passing (i) of Proposition~\ref{prop:fpchecks}, by bracketing
  and bisection, with $\bar a$ the least number passing (ii) at each $\bar g$ tried
\If{(i) and (ii) hold at $(\bar a,\bar g)$, the left-hand sides evaluated rounding upward and the
  powers of $\bar g$ rounding downward}
  \State\Return $\norm A<\bar a$ and $\norm{G_{22}}<\bar g$
\Else
  \State\Return failure
\EndIf
\end{algorithmic}
\end{algorithm}

Algorithm~\ref{alg:fpsweep} summarises the sweep. The running sums are $O(p^{2})$ scalars, with the
$2p$ accumulated vectors of row sums, from which every $F_{a,t}(g)$ and $I_{a,t}(g)$ is evaluated at
any pair after the sweep, so the bisections cost no further application.

%=======================================================================
\section{The certified splitting}\label{sec:split}
%=======================================================================

Let $A$ be large, fix $R>0$, and take the splitting \eqref{eq:introblocks} along a
frame $Q$ with $\Qs=\ran Q$.

\subsection{The frame}\label{sec:frame}

The kept subspace has two jobs needing different directions. The eigenvalues are read from a
compression, which wants eigendirections; the three blocks are singular-value quantities, and
Proposition~\ref{prop:frame} below bounds them through the leading singular vectors of $A$, which a
Krylov space of $A$ need not contain, and rotating inside a subspace cannot help since
$VS(VS)^{*}=VV^{*}$ for unitary $S$. What is needed are the singular vectors of $A$, produced by a
Golub--Kahan--Lanczos bidiagonalisation at two applications per step.

\begin{definition}\label{def:Q}
Fix $k_s$, let $V_\sig=[v_1\ \cdots\ v_{k_s}]$ and $U_\sig=[u_1\ \cdots\ u_{k_s}]$ be the leading right
and left singular vectors of $A$, and $W_\lambda$ the Arnoldi eigendirections with $\abs{\lambda}>R$.
Then $Q$ is the orthonormal factor of a rank-revealing factorisation of $[\,W_\lambda\mid
V_\sig\mid U_\sig\,]$, the numerically dependent columns being dropped.
\end{definition}

\begin{proposition}\label{prop:frame}
If $\Qs\supseteq\operatorname{span}V_\sig$ then $\norm{AP_\perp}\le\sv_{k_s+1}(A)$, so that
$\norm{G_{22}}\le\sv_{k_s+1}$ and $\gamma_1=\norm{Q^{*}AP_\perp}\le\sv_{k_s+1}$, while
$\gamma_2=\norm{P_\perp AQ}$ can be as large as $\sv_1$; if $\Qs\supseteq\operatorname{span}U_\sig$
then $\norm{A^{*}P_\perp}\le\sv_{k_s+1}(A)$, with the roles of $\gamma_1$ and $\gamma_2$ exchanged. When
both hold,
\[
\norm{G_{22}}\le\sv_{k_s+1},\qquad \gamma_1\le\sv_{k_s+1},\qquad \gamma_2\le\sv_{k_s+1},
\]
so that $\gamma_1\gamma_2\le\sv_{k_s+1}^{2}$, against $\sv_{k_s+1}\sv_1$ with one side only. Moreover
$\norm{G_{22}}\ge\sv_{2d+1}(A)$ for every frame of dimension $d$, so no frame of dimension $d$
brings the far block below $\sv_{2d+1}$.
\end{proposition}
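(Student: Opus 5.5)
The first claim reduces to the variational characterisation of singular values. Write $A=\sum_i\sv_iu_iv_i^{*}$. If $\Qs\supseteq\operatorname{span}V_\sig$, then every $x\in\ran P_\perp$ is orthogonal to $v_1,\dots,v_{k_s}$. Hence
\[
\norm{AP_\perp x}^2=\sum_{i>k_s}\sv_i^2\abs{\langle x,v_i\rangle}^2\le\sv_{k_s+1}^2\norm x^2 ,
\]
that is, $\norm{AP_\perp}\le\sv_{k_s+1}$. Then $G_{22}=P_\perp(AP_\perp)$ and $Q^{*}AP_\perp$ are obtained from $AP_\perp$ by a contraction on the left, so both norms are at most $\sv_{k_s+1}$. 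For the remark that $\gamma_2$ can be as large as $\sv_1$, I would take $A=u_1v_1^{*}$ with $u_1\perp v_1$ and $\Qs=\operatorname{span}v_1$. Then $P_\perp AQ$ has norm $\sv_1$.

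The left-singular case follows by passing to the adjoint. We have $A^{*}=\sum\sv_iv_iu_i^{*}$, and its right singular vectors are the $u_i$. The same argument gives $\norm{A^{*}P_\perp}\le\sv_{k_s+1}$. Since $\norm{G_{21}}=\norm{P_\perp AQ}=\norm{Q^{*}A^{*}P_\perp}$ and $\norm{G_{22}}=\norm{P_\perp A^{*}P_\perp}$, the roles of $\gamma_1$ and $\gamma_2$ are exchanged. When both inclusions hold, we combine the two cases: $\gamma_1,\gamma_2,\norm{G_{22}}\le\sv_{k_s+1}$, and the product bound $\gamma_1\gamma_2\le\sv_{k_s+1}^2$ follows at once.

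For the lower bound $\norm{G_{22}}\ge\sv_{2d+1}(A)$, I would reuse the decomposition from the proof of Proposition~\ref{prop:essfloor}:
\[
A-G_{22}=QQ^{*}A+P_\perp AQQ^{*}
\]
has rank at most $2d$. By the Eckart--Young (Schmidt--Mirsky) characterisation, $\sv_{2d+1}(A)=\min\{\norm{A-F}:\rank F\le 2d\}$. Taking $F=A-G_{22}$ gives $\sv_{2d+1}(A)\le\norm{G_{22}}$. Here $G_{22}$ is identified with $P_\perp AP_\perp$ on $\C^n$, which has the same norm.

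I expect no real obstacle. The only care needed is in the rank count: $QQ^{*}A$ and $P_\perp AQQ^{*}$ each have rank at most $d$. I also have to check that $A-P_\perp AP_\perp$ is exactly their sum, which follows by writing $I=QQ^{*}+P_\perp$ on both sides of $A$.
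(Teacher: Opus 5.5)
Your proof is correct and follows the paper's argument. The upper bounds come from $\ran P_\perp\perp v_1,\dots,v_{k_s}$ plus a contraction on the left, the left-singular case from passing to $A^{*}$, and the lower bound from the rank-$2d$ identity $A-G_{22}=QQ^{*}A+P_\perp AQQ^{*}$. The differences are cosmetic: you invoke Eckart--Young where the paper uses the equivalent inequality $\sv_{i+j-1}(X+Y)\le\sv_i(X)+\sv_j(Y)$ with $i=1$, $j=2d+1$, and you give a concrete rank-one example for $\gamma_2=\sv_1$ where the paper notes that $P_\perp Av_i=\sv_iP_\perp u_i$ has norm $\sv_i$ when $u_i\perp\Qs$.
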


\begin{proof}
For unit $x\in\ran P_\perp$ we have $x\perp v_1,\dots,v_{k_s}$, so $\norm{Ax}\le\sv_{k_s+1}$; the claim for
$U_\sig$ is the same for $A^{*}$. The bounds on $G_{22}$ and $\gamma_1$ follow from
$\norm{P_\perp}\le1$, and that on $\gamma_2$ from $\norm{G_{21}}=\norm{Q^{*}A^{*}P_\perp}$. With
$V_\sig$ alone nothing bounds $\gamma_2$ better than $\sv_1$, since the vector
$P_\perp Av_i=\sv_iP_\perp u_i$ has norm $\sv_i$ when $u_i\perp\Qs$. For the last claim,
$A-G_{22}=QQ^{*}A+AQQ^{*}-QQ^{*}AQQ^{*}$ has rank at most $2d$, and
$\sv_{i+j-1}(X+Y)\le\sv_i(X)+\sv_j(Y)$ with $i=1$, $j=2d+1$, $Y=A-G_{22}$ gives
$\sv_{2d+1}(A)\le\sv_1(G_{22})$.
\end{proof}

On an example, $A=U\Sigma V^{*}$ with random unitary $U$, $V$ of size $200$ and
$\sv_i=0.75^{i-1}$, at $R=0.15$ and $k_s=\#\{i:\sv_i\ge R\}=7$:

\begin{center}
\small
\begin{tabular}{lrccccc}
\toprule
& $\dim\Qs$ & frame & $\norm{G_{22}}$ & $\gamma_1$ & $\gamma_2$ & $\gamma_1\gamma_2$\\
\midrule
$\sv_8=0.134$ & $7$  & eigen $+$ right & $1.32\cdot10^{-1}$ & $2.92\cdot10^{-2}$ & $9.73\cdot10^{-1}$ & $2.84\cdot10^{-2}$\\
              & $7$  & eigen $+$ left  & $1.33\cdot10^{-1}$ & $9.92\cdot10^{-1}$ & $2.35\cdot10^{-2}$ & $2.33\cdot10^{-2}$\\
              & $14$ & both            & $1.31\cdot10^{-1}$ & $2.94\cdot10^{-2}$ & $2.39\cdot10^{-2}$ & $7.03\cdot10^{-4}$\\
\bottomrule
\end{tabular}
\end{center}

\noindent
With one side only the product is $2.84\cdot10^{-2}$ and $2.33\cdot10^{-2}$, against
$\sv_{k_s+1}\sv_1=0.134$ and $\sv_{k_s+1}^{2}=1.78\cdot10^{-2}$; with both it is $7.03\cdot10^{-4}$, lower
by factors of $40$ and $33$. Proposition~\ref{prop:frame} bounds the product by $\sv_{k_s+1}^{2}$ and no
more, so how far below that it falls has to be measured.

\subsection{The exterior}\label{sec:exterior}

Four certified numbers close the argument: $\bar g\ge\norm{G_{22}}$ from Section~\ref{sec:norm},
$\bar\gamma_1,\bar\gamma_2$ from \eqref{eq:gram}, and $\sig_{11}$ from Section~\ref{sec:vbd}.
Substituted into Theorem~\ref{thm:assembly} through \eqref{eq:farshort} they give the resolvent bound
at every $z$ with $\abs z>\bar g$ and $\sig_{11}(z)(\abs z-\bar g)>\bar\gamma_1\bar\gamma_2$, and
with it Propositions~\ref{prop:complete} and~\ref{prop:count} and Corollary~\ref{cor:agree}.

\begin{corollary}\label{cor:gap}
Let $R>\bar g$ and suppose the hypothesis of Proposition~\ref{prop:count} holds at every point of the
circle $\abs z=R$. If $G_{11}$ has exactly one eigenvalue of modulus $>R$, counted with algebraic
multiplicity, so does $A$, and every other eigenvalue of $A$ satisfies $\abs{\lambda}<R$.
\end{corollary}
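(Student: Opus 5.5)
The plan is to apply Proposition~\ref{prop:count} to the unbounded component $D=\{\abs z>R\}$ of the complement of the circle $\Gamma=\{\abs z=R\}$. The circle is rectifiable and Jordan, and $D\cup\Gamma=\{\abs z\ge R\}\subseteq\{\abs z>\bar g\}$ because $R>\bar g$. The hypothesis $\sig_{11}(z)\sig_{22}(z)>\gamma_1\gamma_2$ is assumed on $\Gamma$; if it is only assumed in the form $\sig_{11}(z)(\abs z-\bar g)>\gamma_1\gamma_2$, then \eqref{eq:farshort} upgrades it. Proposition~\ref{prop:count} then gives that $A$ and $G_{11}$ have the same number of eigenvalues in $\{\abs z>R\}$, counted with algebraic multiplicity. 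Since $G_{11}$ has exactly one, so does $A$.

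It remains to show that every other eigenvalue $\lambda$ of $A$ satisfies $\abs\lambda<R$, that is, none has $\abs\lambda\ge R$. The count already excludes a second eigenvalue in $\{\abs z>R\}$. On the circle itself, Theorem~\ref{thm:assembly} with \eqref{eq:farshort} makes $A-zI$ invertible at every $z$ with $\abs z=R$, so no point of spectrum lies on $\Gamma$. For the last claim I would read ``eigenvalue'' as any point of $\spec(A)$ outside the disc. The exterior $\{\abs z>R\}$ lies in $\{\abs z>\bar g\}\subseteq\{\abs z>r_{\mathrm{ess}}(A)\}$ by Proposition~\ref{prop:essfloor}. There, as recalled in Section~\ref{sec:reach}, the spectrum consists of isolated eigenvalues of finite algebraic multiplicity. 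Alternatively, the Riesz projection used in the proof of Proposition~\ref{prop:count} has rank one, so the spectrum of $A$ in $D$ is a single eigenvalue of algebraic multiplicity one. Hence the rest of the spectrum lies in $\{\abs z<R\}$.

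The only delicate point is making sure the count is of the whole spectrum in $D$ and not just of eigenvalues, which matters in infinite dimension. I expect this to be handled by the rank of the Riesz projector $P_1$ in Proposition~\ref{prop:count}: it equals the rank of $P_0$, the spectral projector of $G_{11}$ for $D$, which is one. So $\spec(A)\cap D$ is nonempty and finite and consists of eigenvalues whose algebraic multiplicities sum to one. That is exactly one simple eigenvalue, and nothing else of the spectrum is in $\{\abs z\ge R\}$.
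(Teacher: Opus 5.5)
Your proposal is correct and takes the same route the paper intends. The paper gives no separate proof of this corollary; it follows from Proposition~\ref{prop:count} applied to the exterior $\{\abs z>R\}$ of the circle, with Theorem~\ref{thm:assembly} and \eqref{eq:farshort} excluding spectrum of $A$ on the circle itself, exactly as in the proof of Corollary~\ref{cor:agree}. Your extra observation, that the rank-one Riesz projector makes all of $\spec(A)\cap\{\abs z>R\}$ a single simple eigenvalue, is a correct strengthening that the paper does not spell out.
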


\subsection{Choosing the frame, cost and failure}\label{sec:depth}

The frame of Definition~\ref{def:Q} has one free parameter at a given radius $R$, the number $k_s$ of
singular vectors per side, and it is chosen against the hypothesis of Theorem~\ref{thm:assembly} on
the circle $\abs z=R$. Starting from $k_s=0$ and then from $k_s=\#\{\sv_i(A)\ge R\}$, $k_s$ is
doubled until the certified $\bar g$ falls below $R$ and then until the cover of
Remark~\ref{rem:cover} completes, each time bisecting back between the last frame that failed and
the first that passed. Nothing makes the outcome monotone in $k_s$, so the frame returned is one at
which the search went from failing to passing, and a smaller one may pass; the search is uncertified,
and what is certified is the hypothesis on the frame it returns.

\begin{remark}\label{rem:twice}
If $\Qs_{k}\subseteq\Qs_{k+1}$ then
$P_{k+1}^{\perp}(P_{k}^{\perp}AP_{k}^{\perp})P_{k+1}^{\perp}=P_{k+1}^{\perp}AP_{k+1}^{\perp}$, so the
far block of a larger frame needs nothing from a smaller one, and the assembly applies unchanged to
whichever frame the search stops at.
\end{remark}

The two costs pull in opposite directions. The sweep is $mn$ applications once per frame; the verified
singular value decomposition of $G_{11}-zI$ is $O((\dim\Qs)^{3})$ per point of the cover, so a
larger frame costs cubically, multiplied by the number of points. This is the reason for taking the
least $k_s$ the search finds.

\begin{remark}\label{rem:limits}
By Proposition~\ref{prop:frame} no frame of dimension $d$ brings the far block below
$\sv_{2d+1}(A)$, so the singular-value decay of $A$ bounds from below the frame the search can
return (Section~\ref{sec:numnecessary}). The search runs with a cap on $\dim\Qs$, about $n/2$ in
Section~\ref{sec:numerics}, and declares failure when it reaches it.
\end{remark}

\subsection{The frame is not orthonormal}\label{sec:almostortho}

A computed frame satisfies $Q^{*}Q=I_k$ only to rounding, and the hypothesis that it be an isometry
cannot be certified. What can is a bound
\begin{equation}\label{eq:alpha}
  \alpha\;\ge\;\norm{I_k-Q^{*}Q},\qquad\alpha<1,
\end{equation}
the least of several certified bounds on the norm of the ball matrix $Q^{*}Q-I_k$, formed in ball
arithmetic, $\svdbox$ among them.

\begin{proposition}\label{prop:almostortho}
Let $Q\in\C^{n\times k}$ satisfy \eqref{eq:alpha} and put $\kappa^{2}=(1+\alpha)/(1-\alpha)$,
\[
  P_\perp=I-Q(Q^{*}Q)^{-1}Q^{*},\qquad
  \tilde\sig_{11}(z)=\sigmin\bigl(Q^{*}AQ-z\,Q^{*}Q\bigr),
\]
and let $\bar g\ge\norm{P_\perp AP_\perp}$, $\gamma_1\ge\norm{Q^{*}AP_\perp}$,
$\gamma_2\ge\norm{P_\perp AQ}$. Then at every $z$ with $\abs z>\bar g$ and
\begin{equation}\label{eq:alphacond}
  \tilde\sig_{11}(z)\,(\abs z-\bar g)\;>\;\kappa^{2}\,\gamma_1\gamma_2
\end{equation}
the conclusions of the Main Theorem hold for the splitting along the isometry $Z=Q(Q^{*}Q)^{-1/2}$,
whose range is $\ran Q$ and whose compression is $X=Z^{*}AZ$, with $\sig_{11}$ read as
$\tilde\sig_{11}/(1+\alpha)$ and the couplings as $\gamma_i/\sqrt{1-\alpha}$; in particular the
inclusion is into $\Lambda_\varepsilon(X)$ and the count is that of $X$.
\end{proposition}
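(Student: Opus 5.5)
The plan is to reduce everything to the Main Theorem applied to the exact isometry $Z=Q(Q^{*}Q)^{-1/2}$. Condition \eqref{eq:alpha} gives $\alpha<1$, so $S=Q^{*}Q$ is invertible, and $S^{1/2}$ has spectrum in $[\sqrt{1-\alpha},\sqrt{1+\alpha}]$. Then $Z^{*}Z=I$ and $\ran Z=\ran Q$. Moreover $ZZ^{*}=QS^{-1}Q^{*}$, so the $P_\perp$ of the statement is exactly $I-ZZ^{*}$. Consequently the far block $P_\perp AP_\perp$ is literally the far block of the splitting along $Z$, and $\bar g$ carries over unchanged.

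Next I will transport the couplings and the compression.

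\begin{itemize}
\item The couplings are $Z^{*}AP_\perp=S^{-1/2}Q^{*}AP_\perp$ and $P_\perp AZ=P_\perp AQS^{-1/2}$. Since $\norm{S^{-1/2}}\le(1-\alpha)^{-1/2}$, their norms are at most $\gamma_1/\sqrt{1-\alpha}$ and $\gamma_2/\sqrt{1-\alpha}$.
\item The compression is $X=Z^{*}AZ=S^{-1/2}Q^{*}AQS^{-1/2}$. Hence
\[
X-zI=S^{-1/2}\bigl(Q^{*}AQ-zS\bigr)S^{-1/2}.
\]
\item Using $\sigmin(LTR)\ge\sigmin(L)\sigmin(T)\sigmin(R)$ and $\sigmin(S^{-1/2})=\norm{S^{1/2}}^{-1}\ge(1+\alpha)^{-1/2}$, this gives $\sig_{11}^{X}(z)\ge\tilde\sig_{11}(z)/(1+\alpha)$.
\end{itemize}

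Now substitute these bounds into the hypothesis of the Main Theorem for the splitting along $Z$. The required inequality becomes
\[
\frac{\tilde\sig_{11}}{1+\alpha}(\abs z-\bar g)>\frac{\gamma_1\gamma_2}{1-\alpha},
\]
which is exactly \eqref{eq:alphacond}, since $\kappa^2=(1+\alpha)/(1-\alpha)$.

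The conclusions then follow by monotonicity. The right-hand side of \eqref{eq:assembly} increases in $\sig_{11}$ and decreases in the couplings (Theorem~\ref{thm:assembly}). So the resolvent bound \eqref{eq:introres} holds with $\tilde\sig_{11}/(1+\alpha)$ in place of $\sig_{11}$ and with the inflated couplings. The inclusion \eqref{eq:introincl}, with the corresponding $\delta$, is into $\Lambda_\varepsilon(X)$. Here I use that $\tilde\sig_{11}(z)>(1+\alpha)\varepsilon$ implies $\sig_{11}^X(z)>\varepsilon$, which makes the pseudospectral statement go through as in Proposition~\ref{prop:complete}. For the count, Proposition~\ref{prop:count} applies to the splitting along $Z$ with $G_{11}=X$, pointwise on the circle.

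The main obstacle is bookkeeping rather than analysis. I must check that each conclusion of the Main Theorem is monotone in the right direction in the substituted quantities, so that lower bounds on $\sig_{11}$ and upper bounds on the couplings suffice. For the resolvent and the count this is the stated monotonicity of \eqref{eq:assembly}. For the inclusion, the condition $\varepsilon(R-\bar g)>\gamma_1\gamma_2$ must be read with the inflated couplings $\gamma_i/\sqrt{1-\alpha}$, and the level $\varepsilon$ must be read on $X$, not on $Q^{*}AQ$.

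I would also remark that the Lipschitz property used in Remark~\ref{rem:cover} holds for the lower bound $\tilde\sig_{11}/(1+\alpha)$. The map $z\mapsto\sigmin(Q^{*}AQ-zS)$ is $\norm S$-Lipschitz with $\norm S\le1+\alpha$, so $\tilde\sig_{11}/(1+\alpha)$ is $1$-Lipschitz.
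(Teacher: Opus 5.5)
Your proof is correct and is essentially the paper's own argument. The range of the polar factor $Z$ is $\ran Q$, so $P_\perp$ and $\bar g$ carry over unchanged. The compression and couplings are transported through $(Q^{*}Q)^{\pm1/2}$ with factors $1/(1+\alpha)$ and $1/\sqrt{1-\alpha}$, and the Main Theorem's hypothesis for $Z$ then reads \eqref{eq:alphacond}. You make the monotonicity step explicit, which the paper leaves implicit, and your Lipschitz check matches the claim in Remark~\ref{rem:cover}.
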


\begin{proof}
From $(1-\alpha)I\preceq Q^{*}Q\preceq(1+\alpha)I$ the frame has full column rank, and its polar
decomposition is $Q=ZS$ with $Z^{*}Z=I_k$ and $S=(Q^{*}Q)^{1/2}$, so $\norm S\le\sqrt{1+\alpha}$ and
$\norm{S^{-1}}\le(1-\alpha)^{-1/2}$. Since $\ran Z=\ran Q$, the orthogonal projector onto the
complement is $P_\perp$ as displayed, so the far block and $\bar g$ are those of the isometry $Z$.
For the compression, $Q^{*}AQ-zQ^{*}Q=S(Z^{*}AZ-zI)S$ gives
$\sigmin(Z^{*}AZ-zI)\ge\tilde\sig_{11}/(1+\alpha)$. For the couplings,
$Z^{*}AP_\perp=S^{-1}Q^{*}AP_\perp$ and $P_\perp AZ=P_\perp AQ\,S^{-1}$, of norms at most
$\gamma_1/\sqrt{1-\alpha}$ and $\gamma_2/\sqrt{1-\alpha}$. Applying the Main Theorem to $Z$, its
hypothesis reads \eqref{eq:alphacond}.
\end{proof}

Four things change in the algorithm. The projector is applied as
$y-Q\fl(Q^{*}y)$ with the constant $\varepsilon_P$ of \eqref{eq:epsP}, which carries $\alpha$
additively. The compression is read as the pencil $Q^{*}AQ-zQ^{*}Q$, whose smallest singular value is
what Section~\ref{sec:vbd} certifies, at the same cost. The couplings are formed with $I-QQ^{*}$ as in
Section~\ref{sec:coupling}, which bounds those of $P_\perp$. And the threshold carries the single factor
$\kappa^{2}=(1+\alpha)/(1-\alpha)$ on the coupling product, which at the $\alpha\sim10^{-14}$ of a
computed orthonormalisation is $1+O(10^{-14})$ and at $\alpha=10^{-2}$ is $1.02$.

The counts of Proposition~\ref{prop:count} are about the eigenvalues of $Z^{*}AZ$, while what is
formed is $Q^{*}AQ$, as a ball matrix; the eigenvalues of its midpoint $\hat G$ are enclosed, and the number on each side of a
contour is carried over by the following lemma, in which the polar decomposition is used as
in~\cite{Nisoli2026}.

\begin{lemma}\label{lem:homotopy}
Let $Q$ satisfy \eqref{eq:alpha}, let $Q=ZS$ be its polar decomposition, and put $G=Q^{*}AQ$ and
$X=Z^{*}AZ$. Then
\begin{equation}\label{eq:etaalpha}
  \norm{G-X}\;\le\;\eta_\alpha:=\frac{\alpha\,\bigl(1+\sqrt{1+\alpha}\bigr)}{1-\alpha}\,\norm G .
\end{equation}
If $\hat G$ satisfies $\norm{\hat G-G}\le\rho$ and $\Gamma$ is a closed curve with
$\sigmin(X-zI)>\eta_\alpha+\rho$ at every $z\in\Gamma$, then $\hat G$ and $X$ have the same number of
eigenvalues, counted with algebraic multiplicity, in each connected component of $\C\setminus\Gamma$.
\end{lemma}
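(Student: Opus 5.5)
The plan has two parts: a norm estimate for $G-X$ obtained from the polar decomposition, and then a homotopy along the segment from $X$ to $\hat G$ on which the resolvent stays bounded on $\Gamma$.

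\emph{Step 1: the bound on $G-X$.} Since $Q=ZS$ with $S=(Q^{*}Q)^{1/2}$, we have $G=Q^{*}AQ=SXS$. Hence
\[
G-X=(S-I)XS+X(S-I).
\]
To control $S-I$ in terms of $\alpha$, use the spectrum of $S^{2}=Q^{*}Q$, which lies in $[1-\alpha,1+\alpha]$. The function $\abs{\sqrt t-1}$ satisfies $\abs{\sqrt t-1}=\abs{t-1}/(\sqrt t+1)\le\abs{t-1}$, so $\norm{S-I}\le\alpha$. We also have $\norm S\le\sqrt{1+\alpha}$. This gives
\[
\norm{G-X}\le\alpha\,\norm X\,\bigl(1+\sqrt{1+\alpha}\bigr).
\]
It remains to bound $\norm X$ by $\norm G$. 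From $X=S^{-1}GS^{-1}$ and $\norm{S^{-1}}^{2}\le(1-\alpha)^{-1}$ we get $\norm X\le\norm G/(1-\alpha)$, and substituting yields exactly $\eta_\alpha$ in \eqref{eq:etaalpha}.

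\emph{Step 2: the homotopy.} Put $X_t=X+t(\hat G-X)$ for $t\in[0,1]$. By Step 1 and the hypothesis $\norm{\hat G-G}\le\rho$,
\[
\norm{X_t-X}\le\norm{\hat G-X}\le\eta_\alpha+\rho .
\]
Weyl's inequality for singular values, as used in Section~\ref{sec:dense}, then gives, for every $z\in\Gamma$ and $t\in[0,1]$,
\[
\sigmin(X_t-zI)\ge\sigmin(X-zI)-(\eta_\alpha+\rho)>0 .
\]
So no $X_t$ has an eigenvalue on $\Gamma$. Consider first a bounded component $D$ of $\C\setminus\Gamma$. There are two equivalent ways to conclude.
\begin{itemize}
\item Form the Riesz projector $P_t=\frac1{2\pi i}\oint(zI-X_t)^{-1}\,dz$ over the boundary of $D$. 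It is continuous in $t$, because the resolvent is uniformly bounded on the compact set $\Gamma\times[0,1]$ and depends continuously on $t$. Its rank is therefore constant, by the argument of Proposition~\ref{prop:count}.
\item Alternatively, observe that the number of eigenvalues in $D$ is the winding number of $\det(X_t-zI)$ along $\partial D$. This is an integer that varies continuously in $t$, hence is constant.
\end{itemize}
For the unbounded component, the count there is $k$ minus the total count in the bounded components, since both matrices have $k$ eigenvalues in all. This disposes of the unbounded component without any circle at infinity.

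\emph{The main obstacle} is the topological generality of $\Gamma$: it is only called a closed curve, so components of $\C\setminus\Gamma$ need not have rectifiable boundary. The argument-principle version needs only that $\Gamma$ be compact. The cleanest route is to note that the eigenvalues of $X_t$ depend continuously on $t$ as an unordered $k$-tuple and never meet $\Gamma$. Each eigenvalue path therefore stays in one component, and this gives the count in every component at once. I would use this continuity-of-roots argument and avoid contour integrals entirely.
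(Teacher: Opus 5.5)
Your proposal is correct and follows the paper's proof essentially step for step. You use the same identity $G-X=(S-I)XS+X(S-I)$ and the same three norm bounds; your $\norm{S-I}\le\alpha$ via $\abs{\sqrt t-1}\le\abs{t-1}$ matches the paper's $1-\sqrt{1-\alpha}\le\alpha$. You then apply Weyl's inequality along the segment from $X$ to $\hat G$ and conclude by continuity of the eigenvalues, which is exactly the paper's concluding argument, so the Riesz-projector and winding-number alternatives you list are not needed.
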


\begin{proof}
Since $G=SXS$, $G-X=(S-I)XS+X(S-I)$. The eigenvalues of $S$ are the square roots of those of $Q^{*}Q$,
which lie in $[1-\alpha,1+\alpha]$, so $\norm{S-I}\le1-\sqrt{1-\alpha}\le\alpha$,
$\norm S\le\sqrt{1+\alpha}$ and $\norm X\le\norm{S^{-1}}^{2}\norm G\le\norm G/(1-\alpha)$, which give
\eqref{eq:etaalpha}. Then $\norm{\hat G-X}\le\eta_\alpha+\rho$, and for $t\in[0,1]$ and $z\in\Gamma$,
$\sigmin(X+t(\hat G-X)-zI)\ge\sigmin(X-zI)-\norm{\hat G-X}>0$, so no matrix of the segment from $X$
to $\hat G$ has an eigenvalue on $\Gamma$; the eigenvalues depend continuously on $t$, and the number
in each component of $\C\setminus\Gamma$ is constant along the segment.
\end{proof}

In practice $G$ is a ball matrix and $\hat G$ its midpoint, with $\rho$ an upper bound on the spectral
norm of the matrix of radii: the eigenvalues are enclosed for the floating-point matrix $\hat G$, which
for an operator applied to vectors can be enclosed more tightly than the ball, whose radii carry
the rounding of $\fl(AQ)$ as a norm bound on every entry. The hypothesis costs nothing further: the
cover of Remark~\ref{rem:cover} already certifies a lower bound on $\sigmin(X-zI)$ along the whole
contour, and it is compared with $\eta_\alpha+\rho$.

\begin{remark}\label{rem:nokrylov}
No Krylov space appears in the certified path: the Arnoldi and Lanczos processes and the
rank-revealing factorisation enter only through $P_\perp$, and Proposition~\ref{prop:ladder} holds
for any $P_\perp$. The method declines when $\alpha<1$ is not certified; when the checks of
Proposition~\ref{prop:fpchecks} fail; when the hypothesis of Theorem~\ref{thm:assembly} is not
certified on the circle within the cap of Remark~\ref{rem:limits}; when the eigenvalue inclusions of
Section~\ref{sec:vbd} do not separate the circle; and when the lower bound on $\sigmin(X-zI)$ does
not exceed $\eta_\alpha+\rho$ on the circle.
\end{remark}

%=======================================================================
\section{Numerics}\label{sec:numerics}
%=======================================================================

Every certified number is read against a dense computation on the same matrix: the floor of
Theorem~\ref{thm:assembly} against $\sigmin(A-zI)$ from a floating-point singular value
decomposition, the count of Proposition~\ref{prop:count} against the eigenvalues of a dense
factorisation, and $\bar g$ against $\norm{G_{22}}$ formed and normed densely. The dense values carry
rounding and are not themselves certified. The eigenvalues and singular values of
Section~\ref{sec:numnecessary} are formed for every matrix, $\norm{G_{22}}$ and $\sigmin(A-zI)$ for
$n\le2048$, and the eigenvalues and $\sigmin(A-zI)$ of the operator of Section~\ref{sec:numbz} at
$k=2^{12}$. Where a comparison is made, we report whether a certified bound fell on the wrong side of
the dense value, which is how the certificate can be caught being wrong.

The computations use the Julia package RigorousPseudospectra.jl~\cite{RigorousPseudospectra2026},
which implements the certificate on the ball arithmetic of BallArithmetic.jl~\cite{BallArithmetic2023};
the experiments of Sections~\ref{sec:numnecessary} to~\ref{sec:numcert} ran at commit
\texttt{d9cd05c} of the former, which differs from the commit cited only in the drivers of
Section~\ref{sec:numbz}, in drivers the paper does not use, and in a check on the isolation discs.
Its function \texttt{helper\_diagnose} reports in floating point, before a run and at a radius the
user chooses, the quantities the run will certify: the number of Ritz values outside the radius, the
least frame dimension Proposition~\ref{prop:frame} allows, the bound on the far block predicted at
each order of the sweep from the leading singular values of $P_\perp AP_\perp$, computed by a
bidiagonalisation applied matrix-free, with the order to use, and $\gamma_1$, $\gamma_2$ and
$\sig_{11}$ on a few points of the circle with the margin of the hypothesis of
Theorem~\ref{thm:assembly}; nothing it returns enters a certificate. The operator of
Section~\ref{sec:numbz} is assembled with RigorousInvariantMeasures.jl~\cite{RigorousInvariantMeasures2024}.
The experiments below choose
their radii and frames by the rules stated in each subsection. The outputs of every experiment, with each certified
frame stored so that its certificate can be checked again without rebuilding it, are available as
a dataset on the Harvard Dataverse~\cite{NisoliData2026}.

All the runs were made on a 64-core AMD Ryzen Threadripper PRO 9985WX. The matrices are the Brusselator matrices \texttt{rdb200},
\texttt{rdb800l}, \texttt{rdb2048} and \texttt{rdb5000}, the power system \texttt{qh1484}, the Olmstead matrix
\texttt{olm1000} and the symmetric \texttt{nos3} from Matrix Market and SuiteSparse, and
\texttt{grcar500}, \texttt{kahan500}, \texttt{airy200}, \texttt{davies200},
\texttt{convdiff\_cheb200}, \texttt{orrsom200}, \texttt{landau400} and \texttt{hatano2000} from the
EigTool gallery, the four complex ones used as they stand without a real embedding. The four named
for a size of $200$ that the gallery builds at $n=199$ are reported at $199$.

\subsection{What a frame can reach}\label{sec:numnecessary}

Before any certificate is run, Proposition~\ref{prop:frame} says where none can succeed. For a frame
of dimension $d$, $\bar g\ge\norm{G_{22}}\ge\sv_{2d+1}(A)$, so at a radius $R$ the hypothesis
$\bar g<R$ needs $d\ge d_{\min}(R)$, the least $d$ with $\sv_{2d+1}(A)<R$. Placing
$R=(\abs{\lambda_N}\abs{\lambda_{N+1}})^{1/2}$ in each gap between consecutive distinct eigenvalue
moduli, with $N$ eigenvalues outside, Figure~\ref{fig:necessary} plots $d_{\min}$ against $N$ for
every matrix, from a dense eigendecomposition and singular value decomposition of each; the fifteen
took $45$ seconds together. The condition is necessary only: a frame of dimension $d_{\min}$ need not
bring $\bar g$ below $R$, and the couplings and $\sig_{11}$ on the circle decide the rest. At a point
above the diagonal $d_{\min}=N$, every frame that can pass carries more directions than there are
eigenvalues outside, so its compression has eigenvalues inside the disc; these are the radii at
which the separation of Section~\ref{sec:related} cannot be checked through a bound on
$\norm{G_{22}}$, and at which the Main Theorem has to allow them.

\begin{figure}[!tbp]
\centering
\includegraphics[width=\textwidth]{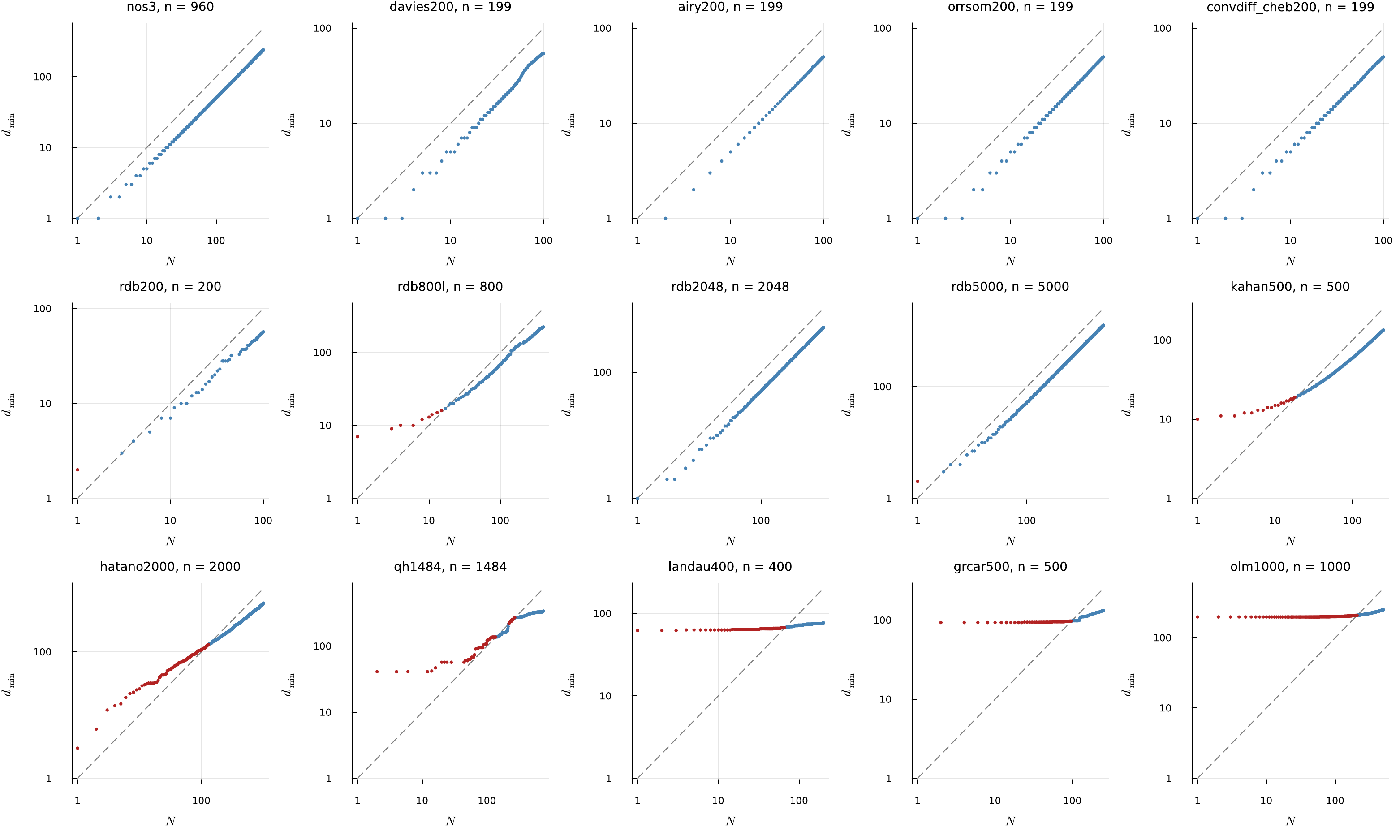}
\caption{The least frame dimension $d_{\min}$ at which $\bar g<R$ is possible, against the number
$N$ of eigenvalues outside the radius $R$ placed in the gap between the $N$-th and $(N+1)$-th
eigenvalue moduli, for $N\le n/2$; points above the dashed diagonal $d_{\min}=N$ in red. Points with
$d_{\min}=0$ are not drawn.}\label{fig:necessary}
\end{figure}

\begin{table}[!tbp]
\centering
\caption{The necessary condition of Figure~\ref{fig:necessary} over the gaps with $N\le n/4$: their
number, the number at which $d_{\min}\le N$, the least $N$ at which $d_{\min}\le N$, the median of
$d_{\min}/N$, and the largest excess $\max(0,d_{\min}-N)$.}\label{tab:necessary}
\begin{tabular}{lrrrrrr}
\toprule
operator & $n$ & gaps & $d_{\min}\le N$ & first $N$ & median $d_{\min}/N$ & max excess\\
\midrule
\texttt{nos3}              & $960$  & $240$ & $240$ & $1$   & $0.50$ & $0$\\
\texttt{davies200}         & $199$  & $49$  & $49$  & $1$   & $0.50$ & $0$\\
\texttt{airy200}           & $199$  & $24$  & $24$  & $2$   & $0.50$ & $0$\\
\texttt{orrsom200}         & $199$  & $49$  & $49$  & $1$   & $0.50$ & $0$\\
\texttt{convdiff\_cheb200} & $199$  & $47$  & $47$  & $1$   & $0.50$ & $0$\\
\texttt{rdb2048}           & $2048$ & $256$ & $256$ & $1$   & $0.50$ & $0$\\
\texttt{rdb5000}           & $5000$ & $625$ & $624$ & $3$   & $0.51$ & $1$\\
\texttt{rdb200}            & $200$  & $25$  & $24$  & $3$   & $0.70$ & $1$\\
\texttt{rdb800l}           & $800$  & $97$  & $88$  & $17$  & $0.71$ & $6$\\
\texttt{kahan500}          & $500$  & $125$ & $107$ & $19$  & $0.66$ & $9$\\
\texttt{hatano2000}        & $2000$ & $500$ & $371$ & $130$ & $0.79$ & $24$\\
\texttt{qh1484}            & $1484$ & $171$ & $96$  & $138$ & $0.98$ & $39$\\
\texttt{landau400}         & $400$  & $100$ & $34$  & $67$  & $1.31$ & $61$\\
\texttt{grcar500}          & $500$  & $62$  & $14$  & $98$  & $1.51$ & $91$\\
\texttt{olm1000}           & $1000$ & $250$ & $43$  & $208$ & $1.60$ & $196$\\
\bottomrule
\end{tabular}
\end{table}

Table~\ref{tab:necessary} gives the same data over the gaps with $N\le n/4$. On \texttt{nos3},
\texttt{davies200}, \texttt{airy200}, \texttt{orrsom200}, \texttt{convdiff\_cheb200} and
\texttt{rdb2048}, $d_{\min}\le N$ at every gap, with $d_{\min}$ close to $N/2$; on \texttt{rdb5000}
and \texttt{rdb200} the one exception is at the top of the spectrum; on \texttt{rdb800l} and
\texttt{kahan500} the points lie above the diagonal at gaps up to $N=15$ and $N=18$, and on or below
it from $N=17$ and $N=19$ on. On
\texttt{hatano2000} they lie above it at gaps up to $N=129$ and on or below it from $N=130$ on; on
\texttt{qh1484} above it up to $N=137$, on or below it from $N=138$ to $216$, and above it again at
$19$ gaps between $N=218$ and $268$, by at most $7$; and on \texttt{landau400}, \texttt{grcar500} and
\texttt{olm1000} $d_{\min}$ stays between $62$ and $67$, $93$ and $97$, and $197$ and $208$ from the
first gap until $N$ reaches it.

\subsection{Frames of one side and of two}\label{sec:numsides}

Proposition~\ref{prop:frame} bounds $\gamma_1\gamma_2$ by $\sv_{k_s+1}^{2}$ when both sides of singular
vectors are in the frame and by $\sv_{k_s+1}\sv_1$ when one is, and the second can also be small. With
the dense eigenvectors of modulus $>R$ and the leading $k_s$ dense singular vectors on the right, on the
left or on both sides, Table~\ref{tab:sides} gives, over the gaps with $N\le n/4$, the number of radii
at which some $k_s$, found by doubling from $k_s=1$ and bisecting, makes the hypothesis hold, with
$\norm{G_{22}}$, the couplings and $\sig_{11}$ formed densely and $\sig_{11}$ minimised over $64$
points of the circle, and the median $\dim\Qs/N$ at those radii. Nothing in this table is certified,
and the frame of the eigenvectors alone, common to the three, is not included. The cap
$\dim\Qs\le n/2$ leaves a two-sided frame at most $(n/2-N)/2$ vectors per side, against $n/2-N$ for a
one-sided one.

\begin{table}[!tbp]
\centering
\caption{Frames of right, left and two-sided singular vectors, with the dense eigenvectors of modulus
$>R$, over the gaps with $N\le n/4$: the number of radii at which, for some $k_s$,
$\norm{G_{22}}<R$ and $\min_{\abs z=R}\sigmin(G_{11}-zI)\,(R-\norm{G_{22}})>\norm{G_{12}}\norm{G_{21}}$,
with the norms formed densely and the minimum taken over $64$ points of the circle, none of it
certified; and the median $\dim\Qs/N$ at those radii. On \texttt{rdb2048}, \texttt{qh1484} and
\texttt{hatano2000} the radii are every gap with $N\le16$ and those at $48$ indices spread
geometrically over the others up to $n/4$, of which $37$, $36$ and $39$ remain after duplicates are
removed;
\texttt{rdb5000} is not included.}\label{tab:sides}
\begin{tabular}{lrrrrrrr}
\toprule
 & & \multicolumn{2}{c}{right} & \multicolumn{2}{c}{left} & \multicolumn{2}{c}{both}\\
operator & radii & met & $\dim\Qs/N$ & met & $\dim\Qs/N$ & met & $\dim\Qs/N$\\
\midrule
\texttt{nos3}              & $240$ & $240$ & $1.00$ & $240$ & $1.00$ & $240$ & $1.00$\\
\texttt{davies200}         & $49$  & $32$  & $1.87$ & $33$  & $1.74$ & $28$  & $2.00$\\
\texttt{airy200}           & $24$  & $19$  & $1.62$ & $21$  & $1.58$ & $16$  & $1.75$\\
\texttt{orrsom200}         & $49$  & $22$  & $2.17$ & $4$   & $8.88$ & $17$  & $2.50$\\
\texttt{convdiff\_cheb200} & $47$  & $25$  & $2.04$ & $25$  & $1.92$ & $21$  & $2.00$\\
\texttt{rdb200}            & $25$  & $0$   & ---    & $0$   & ---    & $15$  & $2.82$\\
\texttt{rdb800l}           & $97$  & $0$   & ---    & $0$   & ---    & $56$  & $2.92$\\
\texttt{landau400}         & $100$ & $58$  & $5.36$ & $57$  & $4.72$ & $0$   & ---\\
\texttt{kahan500}          & $125$ & $0$   & ---    & $0$   & ---    & $0$   & ---\\
\texttt{grcar500}          & $62$  & $0$   & ---    & $0$   & ---    & $0$   & ---\\
\texttt{olm1000}           & $250$ & $0$   & ---    & $0$   & ---    & $0$   & ---\\
\texttt{rdb2048}           & $46$  & $33$  & $1.04$ & $33$  & $1.04$ & $43$  & $1.08$\\
\texttt{qh1484}            & $42$  & $5$   & $16.4$ & $4$   & $30.3$ & $19$  & $18.1$\\
\texttt{hatano2000}        & $55$  & $0$   & ---    & $0$   & ---    & $0$   & ---\\
\bottomrule
\end{tabular}
\end{table}

On \texttt{nos3}, which is symmetric, the three coincide. On the four matrices of size $199$ the
two one-sided frames together meet the hypothesis at every radius at which the two-sided one does and
at $5$ or $6$ more; on \texttt{davies200}, \texttt{airy200} and \texttt{convdiff\_cheb200} one of them
alone does, while on \texttt{orrsom200} the right-sided frame misses one of the radii of the two-sided
one and the left-sided frame meets only $4$. The smaller of the one-sided frames has the smaller
$\dim\Qs$ at all of those radii except three on \texttt{orrsom200}. On \texttt{rdb200}
and \texttt{rdb800l} only the two-sided frame meets it, at $15$ and $56$ radii; on \texttt{landau400}
only the one-sided frames do, at $58$ and $57$ radii. On \texttt{rdb2048} and \texttt{qh1484} the
two-sided frame meets it at every radius at which a one-sided frame does and at $10$ and $13$ more,
and no one-sided frame is smaller there; on \texttt{kahan500}, \texttt{grcar500}, \texttt{olm1000}
and \texttt{hatano2000} none does.

\subsection{The far block, certified and measured}\label{sec:numfar}

For every gap radius of Section~\ref{sec:numnecessary} with $N\le\lfloor n/2\rfloor+1$, the frame of
Definition~\ref{def:Q} was built with $k_s$ singular vectors per side: first $k_s=0$, the
eigendirections alone; then $k_s=K=\#\{\sv_i(A)\ge R\}$, doubled up to $(n/2-N)/2$ until the
certified $\bar g$ of Section~\ref{sec:fpsweep}, at order $8$, fell below $R$; then bisection
between the last frame that failed and the first that passed, as in Section~\ref{sec:depth}; at the
largest one or two $N$ the cap leaves the eigendirections alone. Table~\ref{tab:gbar} compares
$\bar g$ on the frames reported with $\norm{G_{22}}$ formed densely: no ratio is below one, the median
by operator lies between $1.000$ and $1.185$ and the largest single ratio is $1.244$, while the bound
at order one alone has medians between $1.14$ and $18.2$.

\begin{table}[!tbp]
\centering
\renewcommand{\thetable}{\arabic{table}a}\renewcommand{\theHtable}{\arabic{table}a}
\caption{The certified $\bar g$ at order $8$ against the dense $\norm{G_{22}}$, over one frame per gap
radius with $N\le\lfloor n/2\rfloor+1$: the number of radii, the number at which a frame with
$\bar g<R$ was found within the cap, and the median and largest $\bar g/\norm{G_{22}}$, with the
median of the bound at order one, the Frobenius norm. On \texttt{hatano2000}, \texttt{rdb2048} and
\texttt{rdb5000} the radii are every gap with $N\le16$ and those at $48$ indices spread geometrically
over the others up to $n/2$, of which $41$, $39$ and $41$ remain after duplicates are removed; on \texttt{rdb5000}
$\norm{G_{22}}$ was not formed.}\label{tab:gbar}
\begin{tabular}{lrrrrr}
\toprule
operator & radii & $\bar g<R$ & median & largest & median, order one\\
\midrule
\texttt{landau400}         & $201$ & $0$   & $1.000$ & $1.075$ & $1.14$\\
\texttt{qh1484}            & $357$ & $66$  & $1.016$ & $1.068$ & $2.90$\\
\texttt{convdiff\_cheb200} & $88$  & $59$  & $1.042$ & $1.066$ & $4.03$\\
\texttt{airy200}           & $58$  & $36$  & $1.052$ & $1.084$ & $4.34$\\
\texttt{davies200}         & $93$  & $51$  & $1.053$ & $1.073$ & $4.58$\\
\texttt{orrsom200}         & $100$ & $69$  & $1.059$ & $1.084$ & $4.56$\\
\texttt{rdb200}            & $51$  & $11$  & $1.116$ & $1.147$ & $5.62$\\
\texttt{kahan500}          & $251$ & $55$  & $1.127$ & $1.163$ & $10.0$\\
\texttt{olm1000}           & $494$ & $0$   & $1.137$ & $1.203$ & $8.07$\\
\texttt{nos3}              & $481$ & $127$ & $1.150$ & $1.210$ & $13.0$\\
\texttt{grcar500}          & $125$ & $0$   & $1.154$ & $1.191$ & $11.9$\\
\texttt{hatano2000}        & $57$  & $38$  & $1.162$ & $1.244$ & $17.6$\\
\texttt{rdb800l}           & $195$ & $31$  & $1.162$ & $1.196$ & $11.0$\\
\texttt{rdb2048}           & $48$  & $36$  & $1.185$ & $1.213$ & $18.2$\\
\texttt{rdb5000}           & $50$  & $39$  & ---     & ---     & ---\\
\bottomrule
\end{tabular}
\end{table}

Since $\bar g$ is within a factor $1.244$ of $\norm{G_{22}}$, whether a frame passes depends on
$\norm{G_{22}}$ itself, which
Proposition~\ref{prop:frame} places between $\sv_{2d+1}(A)$, for $d=\dim\Qs$, and, for a frame
containing the exact leading $k_s$ singular vectors on each side, $\sv_{k_s+1}(A)$.
Table~\ref{tab:framesv} compares, at the same radius and $k_s$, the frame of Definition~\ref{def:Q}
used above, whose singular vectors come from a Golub--Kahan--Lanczos bidiagonalisation of length
$2k_s+60$, with the frame whose singular vectors come from a dense singular value decomposition; its
caption records two further frames, the bidiagonalisation run to length $n$ and the dense
eigenvectors in place of the Arnoldi ones. On \texttt{davies200}, \texttt{olm1000} and
\texttt{qh1484}, $\norm{G_{22}}$ sits at or below $\sv_{k_s+1}$, often on it, and far above
$\sv_{2d+1}$, and the frames agree to within a few per cent except on \texttt{olm1000} at $N=250$. On
\texttt{landau400}, where $\sv_{k_s+1}=1$ at every point tested, at the three radii of the table the
dense singular vectors bring $\norm{G_{22}}$ below $R$ at $\dim\Qs=199$ and $200$, and the
bidiagonalisation, of either length, does not; at $N=1$ both do.

\begin{table}[!tbp]
\centering
\addtocounter{table}{-1}\renewcommand{\thetable}{\arabic{table}b}\renewcommand{\theHtable}{\arabic{table}b}
\caption{$\norm{G_{22}}$, formed densely, for two frames at the same radius and $k_s$, both with Arnoldi
eigendirections: singular vectors from a Golub--Kahan--Lanczos bidiagonalisation of length $2k_s+60$,
and from a dense singular value decomposition. $d$ is the dimension of the first frame; the second
is within $3$ of it except on \texttt{olm1000} at $N=250$, where it is $309$. A bidiagonalisation of length $n$ gives the same values except
$376$ on \texttt{davies200} at $N=30$ and $7.72\cdot10^{4}$ on \texttt{olm1000} at $N=250$; dense
eigenvectors with the dense singular vectors give the same values except $4.59\cdot10^{4}$ on
\texttt{olm1000} at $N=250$.}\label{tab:framesv}
\begin{tabular}{lrrrrrrrr}
\toprule
operator & $N$ & $R$ & $k_s$ & $d$ & bidiag. & dense SVD & $\sv_{2d+1}$ & $\sv_{k_s+1}$\\
\midrule
\texttt{davies200} & $11$ & $4819$ & $10$ & $31$ & $1180$ & $1180$ & $257$ & $4820$\\
\texttt{davies200} & $30$ & $743.8$ & $30$ & $60$ & $362$ & $376$ & $133$ & $703$\\
\texttt{olm1000} & $1$ & $1.016\cdot10^{4}$ & $197$ & $395$ & $6.09\cdot10^{4}$ & $6.09\cdot10^{4}$ & $0.552$ & $6.09\cdot10^{4}$\\
\texttt{olm1000} & $250$ & $5077$ & $125$ & $327$ & $7.16\cdot10^{4}$ & $7.72\cdot10^{4}$ & $0.552$ & $7.85\cdot10^{4}$\\
\texttt{qh1484} & $2$ & $6.675\cdot10^{11}$ & $82$ & $165$ & $6.67\cdot10^{11}$ & $6.67\cdot10^{11}$ & $9.35\cdot10^{7}$ & $6.67\cdot10^{11}$\\
\texttt{qh1484} & $94$ & $1.353\cdot10^{9}$ & $214$ & $466$ & $1.31\cdot10^{9}$ & $1.31\cdot10^{9}$ & $377$ & $1.31\cdot10^{9}$\\
\texttt{landau400} & $21$ & $0.838$ & $89$ & $199$ & $0.901$ & $0.0957$ & $2.8\cdot10^{-17}$ & $1$\\
\texttt{landau400} & $41$ & $0.3424$ & $79$ & $199$ & $0.887$ & $0.0612$ & $2.8\cdot10^{-17}$ & $1$\\
\texttt{landau400} & $60$ & $0.04817$ & $70$ & $200$ & $0.735$ & $0.0275$ & $0$ & $1$\\
\bottomrule
\end{tabular}
\end{table}

\subsection{The certificate}\label{sec:numcert}

At every radius at which Section~\ref{sec:numfar} found $\bar g<R$ the frame is certified in full:
the far-block bound by the sweep at order $8$, $\gamma_1$ and $\gamma_2$ by the Gram matrices of
Section~\ref{sec:coupling}, the number of eigenvalues of $G_{11}$ outside $R$ by Theorem~2.2
of~\cite{Rump2022a} and Lemma~\ref{lem:homotopy}, $\sig_{11}$ by one verified singular value
decomposition per point, and the
hypothesis on the whole circle by the cover of Remark~\ref{rem:cover}, from $64$ initial arcs. When
the cover fails, $k_s$ is doubled up to $(n/2-N)/2$ and bisected back from the first frame that
passes. With the bisection capped at depth $64$ (Remark~\ref{rem:cover}), a cover fails where the
hypothesis fails at a sample point, where an arc cannot be split in floating point, or at the cap;
every failure in Table~\ref{tab:cert} is of the first kind.
The frame reported at each radius is the one at which the search ended, and it is stored with its
certificate in the data deposit. On \texttt{qh1484} the run was stopped after $92$ hours, when the
search had ended at $6$ of the $17$ radii of its sample; Table~\ref{tab:cert} reports those six.

\begin{table}[!tbp]
\centering
\caption{The full certificate at the radii of Table~\ref{tab:gbar} with $\bar g<R$, sampled on four
operators as stated below: their number, the
number at which the hypothesis is certified on the circle and the count outside $R$ is certified
(the two coincide here), how many of those at the frame of Section~\ref{sec:numfar}, the range of
$N$ certified, the median $\dim\Qs/N$, and the median of the least lower bound the cover certifies on
the circle over the least dense $\sigmin(A-zI)$ at $64$ points of the circle, a ratio that is at most
$0.70$ wherever the dense value is formed. At every radius certified the certified count equals the dense count $N$. On \texttt{rdb2048}, \texttt{hatano2000}, \texttt{rdb5000} and \texttt{qh1484}
the radii are every feasible one with $N\le16$ and a sample of the others spread geometrically, of
at most $48$ on the first three and at most $12$ on \texttt{qh1484}, where only the six radii $N=2,12,14,16,28,118$
at which the search ended are reported; on \texttt{rdb5000} no dense $\sigmin$ was
formed.}\label{tab:cert}
\begin{tabular}{lrrrrrr}
\toprule
operator & radii & certified & at that frame & $N$ & $\dim\Qs/N$ & floor/$\sigmin$\\
\midrule
\texttt{nos3}              & $127$ & $34$ & $0$  & $1$--$34$ & $4.3$  & $2.4\cdot10^{-1}$\\
\texttt{davies200}         & $51$  & $18$ & $14$ & $1$--$30$ & $1.0$  & $1.1\cdot10^{-2}$\\
\texttt{airy200}           & $36$  & $16$ & $15$ & $2$--$32$ & $1.0$  & $1.6\cdot10^{-2}$\\
\texttt{orrsom200}         & $69$  & $19$ & $15$ & $1$--$30$ & $1.0$  & $1.9\cdot10^{-5}$\\
\texttt{convdiff\_cheb200} & $59$  & $21$ & $13$ & $1$--$28$ & $1.0$  & $1.6\cdot10^{-2}$\\
\texttt{rdb200}            & $11$  & $11$ & $11$ & $1$--$19$ & $4.8$  & $1.2\cdot10^{-1}$\\
\texttt{rdb800l}           & $31$  & $8$  & $0$  & $1$--$13$ & $42.1$ & $1.8\cdot10^{-1}$\\
\texttt{kahan500}          & $55$  & $0$  & ---  & ---       & ---    & ---\\
\texttt{qh1484}            & $6$   & $5$  & $2$  & $2$--$28$ & $21.0$ & $8.6\cdot10^{-2}$\\
\texttt{rdb2048}           & $32$  & $1$  & $0$  & $1$       & $968$  & $3.2\cdot10^{-2}$\\
\texttt{hatano2000}        & $37$  & $0$  & ---  & ---       & ---    & ---\\
\texttt{rdb5000}           & $34$  & $3$  & $0$  & $1$--$4$  & $831$  & ---\\
\bottomrule
\end{tabular}
\end{table}

On the four matrices of size $199$ the certificate holds up to $N$ between $28$ and $32$, with
$\dim\Qs$ close to $N$. On \texttt{rdb800l} the eight radii certified, $N\le13$, all lie where
$d_{\min}>N$ in Table~\ref{tab:necessary}, with a median $\dim\Qs/N$ of $42.1$. On \texttt{rdb2048}
and \texttt{rdb5000} the radii certified, $N=1$ and $N=1,3,4$, need frames of $968$ and of $2494$
columns, close to the cap $n/2$; on \texttt{qh1484} the five radii certified, $N=2$ to $28$, need
frames of $252$ or $572$ columns; on \texttt{kahan500} and \texttt{hatano2000} the hypothesis fails
at a sample point at every radius.

Figure~\ref{fig:certmap} shows, for \texttt{davies200} at $N=30$, the larger of the lower bounds of
Theorem~\ref{thm:assembly} and Proposition~\ref{prop:blockdom} on $\sigmin(A-zI)$ over the plane, at the frame of
Table~\ref{tab:cert}: radius $R=743.8$, $\dim\Qs=62$, $\bar g=358.6$ from the sweep of order $8$, and $\sig_{11}$ by one verified
singular value decomposition at each of $300\times300$ grid points in each panel. Over the whole
plane the bound exists at every grid point, and its ratio to the dense $\sigmin(A-zI)$ has median
$0.999$ and least value $0.697$; in the zoom it exists at $84390$ of the $90000$ points. At no grid
point does it exceed the dense value.

\begin{figure}[!tbp]
\centering
\includegraphics[width=\textwidth]{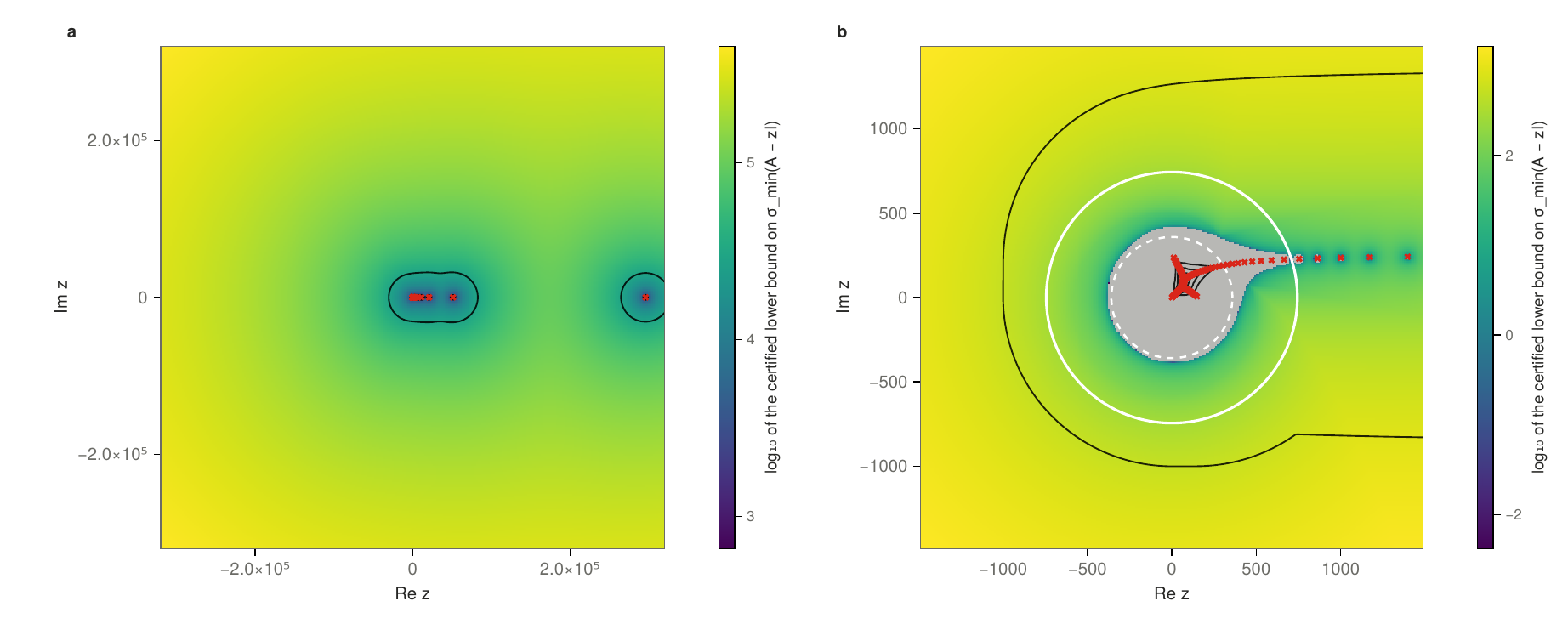}
\caption{\texttt{davies200}, $N=30$: the certified lower bound on $\sigmin(A-zI)$, $\log_{10}$, (a)
over $[-3.2\cdot10^{5},3.2\cdot10^{5}]^2$ and (b) over $[-2R,2R]^2$, each panel on its own colour
scale. Grey: no bound at that point. White: the circle $\abs z=R$, and dashed the circle
$\abs z=\bar g$. Black: contours of the dense $\sigmin(A-zI)$, at $\varepsilon=10^{-3},10^{-2},
10^{-1}$ times the certified upper bound $\bar a$ on $\norm A$ in (a) and at
$10^{-8},10^{-4},10^{-1},10^{3}$ in (b). Red: the eigenvalues of $A$.}\label{fig:certmap}
\end{figure}

Figure~\ref{fig:certmapb} shows the same for \texttt{rdb800l} at $N=8$, a radius at which
$d_{\min}>N$: $R=30.02$, $\dim\Qs=322$, $\bar g=25.36$, on a grid of $200\times200$ points over
$[-34.5,34.5]^2$. The bound exists at $23222$ of the $40000$ points; its ratio to the dense
$\sigmin(A-zI)$ has median $0.296$, and at no grid point does it exceed the dense value.

\begin{figure}[!tbp]
\centering
\includegraphics[width=0.5\textwidth]{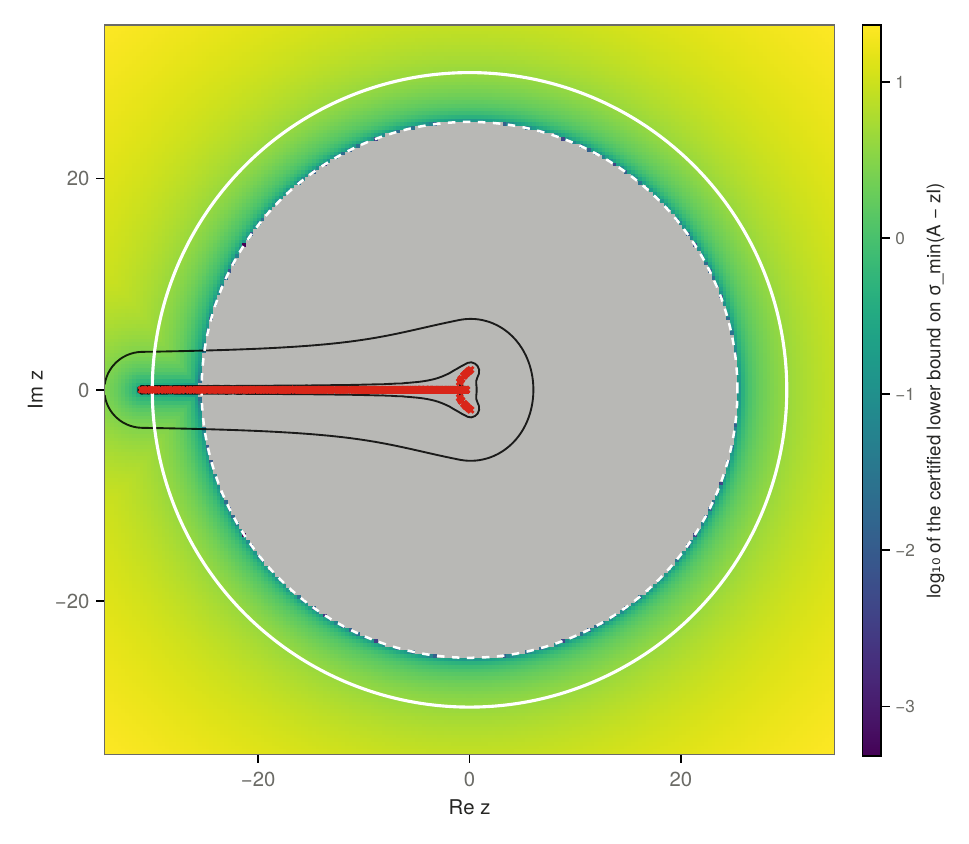}
\caption{\texttt{rdb800l}, $N=8$: the certified lower bound on $\sigmin(A-zI)$, $\log_{10}$, with
the conventions of Figure~\ref{fig:certmap}; contours of the dense $\sigmin(A-zI)$ at
$\varepsilon=10^{-3},10^{-2},10^{-1}$ times $\bar a$.}\label{fig:certmapb}
\end{figure}

\subsection{An operator that is not stored}\label{sec:numbz}

The bounds of this section are bounds on the pseudospectrum and the eigenvalues of the
discretised operator $A=\mathcal NL$ below, a matrix of size $k$, and not on those of the transfer operator it
discretises; the passage to the transfer operator is the argument
of~\cite{BlumenthalEtAl2025,Nisoli2026}, which uses a lower bound on $\sigmin(A-zI)$ of the kind
certified here.

The map is that of the Belousov--Zhabotinsky reaction studied
in~\cite{MatsumotoTsuda1983,GalatoloEtAl2020}. $L$ is the midpoint of the interval Ulam matrix of
the map on a partition of $[0,1]$ into $k$ cells~\cite{GalatoloNisoli2014,GalatoloEtAl2023}, sparse, and $\mathcal N$ is the
convolution with a periodic uniform noise of half-width $\xi$, that is of $\lfloor\xi k\rceil$
cells, both built with RigorousInvariantMeasures.jl~\cite{RigorousInvariantMeasures2024}. The operator $A=\mathcal NL$ is never formed: it is reached through
$y\mapsto \mathcal N(Ly)$ and $y\mapsto L^{*}(\mathcal Ny)$. Its rounding satisfies $\norm{\fl(Ay)-Ay}\le c_A\norm y$,
with $c_A$ bounded from the stencil width of $L$, the half-width of $\mathcal N$ and an upper bound on
$\norm L$, and enters \eqref{eq:roundA} as $\varepsilon_A=c_A/a_{\mathrm{lo}}$, with $a_{\mathrm{lo}}$ a
certified lower bound on $\norm A$ from one application to the constant vector. We take $\xi=0.1$, at $k=2^{12}$, where the dense
eigenvalues and $\sigmin(A-zI)$ can still be formed for comparison, and at $k=2^{16}$, where they are
not, and the radii are placed in the gaps of the dense eigenvalues at $k=2^{12}$ and of the Ritz values
of an Arnoldi process of length $300$ at $k=2^{16}$. At each radius with $N\le16$ the frame is
searched as in Section~\ref{sec:depth}, from $k_s=0$ and then $k_s=8$, doubled up to $512$; the count
is certified on the circle, and around each component of the
enclosure of the eigenvalues of $G_{11}$ outside $R$ the smallest disc on a geometric ladder of $12$
radii is sought whose boundary passes the hypothesis of Proposition~\ref{prop:count}.

\begin{table}[!tbp]
\centering
\caption{The certificate and the isolation discs on the Belousov--Zhabotinsky operator $A=\mathcal NL$ with
$\xi=0.1$, at the radii with $N\le7$ eigenvalues outside: the frame found, $\bar g$ at order $8$,
the certified count outside $R$, the largest radius of the isolation discs, and the wall clock of
the search and the discs. At $k=2^{12}$ every dense eigenvalue of modulus greater than $R$ lies in a
disc holding the number the disc certifies. A dash: no frame tried passed, with $k_s=0,8,16,\dots,512$;
at the smallest the certified $\bar g$ is not below $R$ ($k_s\le8$ at $N=5$, $k_s\le16$ at $N=7$), and
at the others the hypothesis fails at a sample point of the circle.}\label{tab:bz}
\begin{tabular}{rrrrrrrr}
\toprule
$k$ & $N$ & $R$ & $\dim\Qs$ & $\bar g$ & count & largest disc & time (s)\\
\midrule
$2^{12}$ & $1$ & $0.752$  & $7$    & $0.288$   & $1$ & $0.248$  & $37$\\
         & $3$ & $0.307$  & $17$   & $0.132$   & $3$ & $0.259$  & $5$\\
         & $4$ & $0.120$  & $94$   & $0.0271$  & $4$ & $0.0469$ & $70$\\
         & $5$ & $0.0734$ & $363$  & $0.00752$ & $5$ & $0.0125$ & $1950$\\
         & $7$ & $0.0513$ & $677$  & $0.00399$ & $7$ & $0.0346$ & $30743$\\
$2^{16}$ & $1$ & $0.752$  & $7$    & $0.289$   & $1$ & $0.248$  & $714$\\
         & $3$ & $0.309$  & $17$   & $0.133$   & $3$ & $0.256$  & $710$\\
         & $4$ & $0.115$  & $102$  & $0.0251$  & $4$ & $0.0541$ & $1992$\\
         & $5$ & $0.0760$ & ---    & ---       & --- & ---      & $2877$\\
         & $7$ & $0.0591$ & ---    & ---       & --- & ---      & $2900$\\
\bottomrule
\end{tabular}
\end{table}

Table~\ref{tab:bz} gives the outcome. At $k=2^{12}$ the count and the discs are certified at every
radius with $N\le7$, and each count equals the number of dense eigenvalues outside $R$. At
$k=2^{16}$ they are certified for $N\le4$, with frames of $7$, $17$ and $102$ columns against
$7$, $17$ and $94$ at $2^{12}$; for $N=5$ and $7$ no frame up to $k_s=512$, $\dim\Qs=1029$ and
$1031$, passes, the larger ones failing at a sample point of the circle, where at $2^{12}$ frames of
$363$ and $677$ columns pass. The disc certified about the leading eigenvalue at $2^{12}$, $N=7$, has
radius $2.85\cdot10^{-5}$ and a centre within $1.2\cdot10^{-14}$ of $1$.

Figure~\ref{fig:bzmap} shows the larger of the lower bounds of Theorem~\ref{thm:assembly} and
Proposition~\ref{prop:blockdom} on $\sigmin(A-zI)$ at $N=4$
for both sizes, on the frames of Table~\ref{tab:bz}, over $[-1.15,1.15]^2$: the bound exists at
$158880$ of $160000$ grid points at $2^{12}$ and at $158956$ of $160000$ at $2^{16}$, the grid taking
$487$ and $1469$ seconds on $12$ threads.
At $2^{12}$, at the $118$ of $120$ points of a coarser grid at which it exists, its ratio to the dense $\sigmin(A-zI)$ has median
$0.926$ and least value $0.570$, and it never exceeds the dense value. The four isolation discs are
centred at $1$, with radius $7\cdot10^{-4}$ at both sizes; at $-0.082\pm0.560i$, with radius
$1.2\cdot10^{-2}$ at both sizes; and at $0.167$ and $0.169$, with radii $0.047$ and $0.054$ at $2^{12}$
and $2^{16}$.

\begin{figure}[!tbp]
\centering
\includegraphics[width=\textwidth]{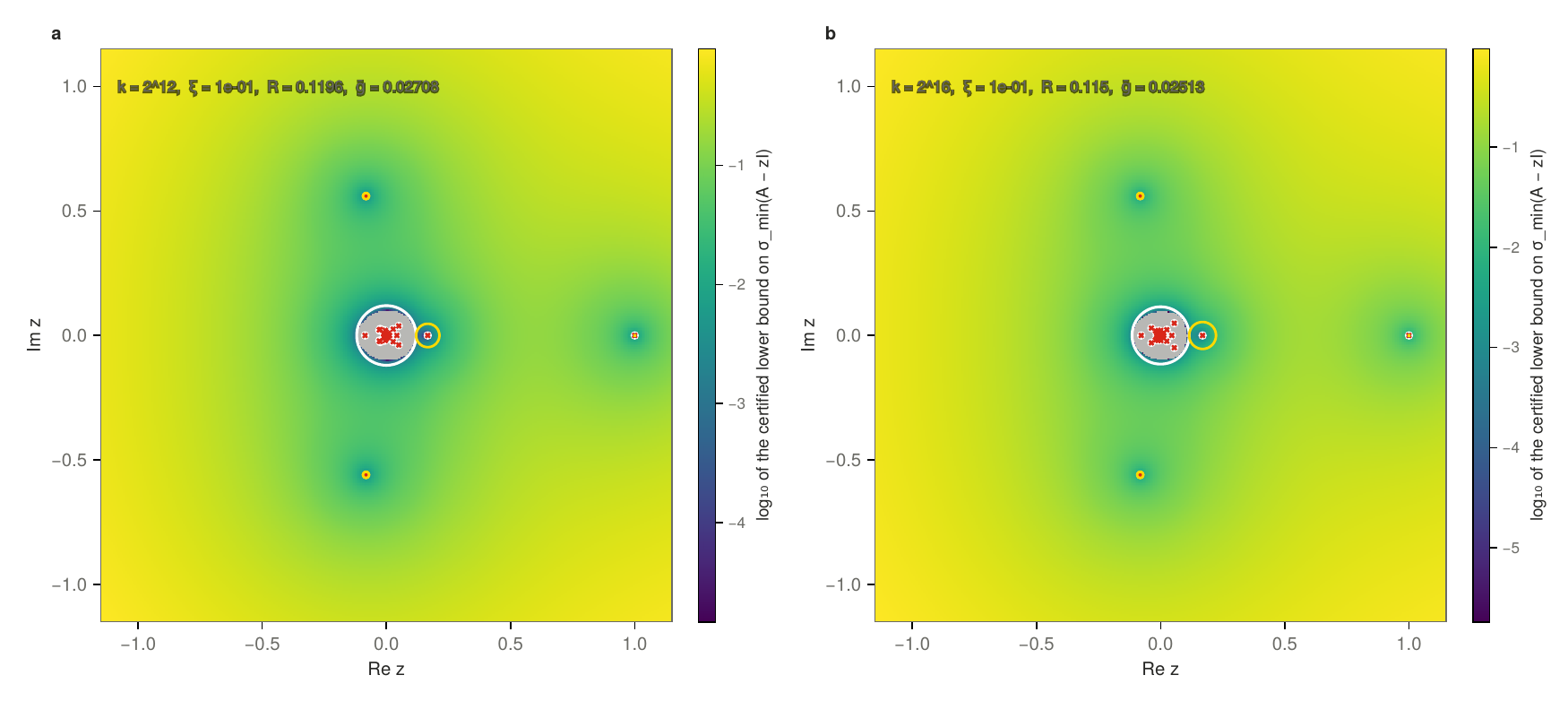}
\caption{The Belousov--Zhabotinsky operator with $\xi=0.1$ at $N=4$, (a) $k=2^{12}$ and (b)
$k=2^{16}$: the certified lower bound on $\sigmin(A-zI)$, $\log_{10}$, each panel on its own colour
scale. Grey: no bound at that point. White: the circle $\abs z=R$, and dashed the circle
$\abs z=\bar g$. Yellow: the isolation discs. Red: the dense eigenvalues in (a), the leading Ritz
values in (b).}\label{fig:bzmap}
\end{figure}

%=======================================================================
\section{Discussion}\label{sec:discussion}
%=======================================================================

Proposition~\ref{prop:frame} ties the frames that can pass to the singular-value decay of $A$.
Section~\ref{sec:numnecessary} reads off the decay where no frame can succeed, and
Section~\ref{sec:numcert} where the certificate holds: on the four matrices of size $199$ up to $N$ between $28$ and $32$, with $\dim\Qs$ close to
$N$; on \texttt{rdb800l} at radii where $d_{\min}>N$, with a median $\dim\Qs/N$ of $42.1$; on
\texttt{qh1484} at five of the six radii run, with a median $\dim\Qs/N$ of $21$; on \texttt{rdb2048} and
\texttt{rdb5000} only with frames close to the cap $n/2$; on \texttt{kahan500} and \texttt{hatano2000}
nowhere. The certified $\bar g$ exceeds the measured $\norm{G_{22}}$ by at
most $25\%$ (Table~\ref{tab:gbar}), and the measured $\norm{G_{22}}$ sits at or below
$\sv_{k_s+1}(A)$ and far above the lower bound $\sv_{2d+1}(A)$ of Proposition~\ref{prop:frame}
(Table~\ref{tab:framesv}).

Inside the disc the only bound available without further work is
$\sigmin(G_{22}-zI)\ge\sigmin(G_{22})-\abs{z}$, and $\sigmin(G_{22})$ sits in the tail of the
singular spectrum of $A$, since a compression does not increase singular values and so
$\sigmin(G_{22})\le\sv_{n-k}(A)$ on $\ran P_\perp$. Shifting does not rescue it: for $A$ compact, $A-z_0I$
differs from $-z_0I$ by a compact operator, so by Proposition~\ref{prop:essfloor} no frame brings the
far block of the shifted operator below $\norm{A-z_0I}_{\mathrm{ess}}=\abs{z_0}$, and the disc that
has to be excluded about $z_0$ always contains the origin. The continuation that would reach the
interior is the sweep of Section~\ref{sec:matfree} applied to $(A-z_0I)^{-1}$, which needs certified
solves rather than applications, at $mn$ solves per centre; the inequalities of Section~\ref{sec:norm}
transfer to it unchanged, and we have not carried it out.

A certified isolation circle about an eigenvalue $\lambda$ cannot be shrunk below a radius of about
$\gamma_1\gamma_2/(\abs{\lambda}-\bar g)$, since Theorem~\ref{thm:assembly} needs
$\sig_{11}\sig_{22}>\gamma_1\gamma_2$ and $\sig_{11}$ vanishes at an eigenvalue of $G_{11}$. For the
example of Section~\ref{sec:frame}, $\gamma_1\gamma_2=7.0\cdot10^{-4}$; the coupling
$\gamma_2=\norm{P_\perp AQ}$ vanishes when the kept subspace is $A$-invariant. The bound on the far block depends on the order of the sweep through
the flatness of the singular spectrum of the far block (Proposition~\ref{prop:ladder}).

The cover puts the method on contours, at one verified singular value
decomposition of size $\dim\Qs$ per point; a picture on a grid pays that at every grid point.
Figure~\ref{fig:certmap} is such a picture, $90000$ points at $\dim\Qs=62$ per panel, in $224$ and
$218$ seconds on $16$ threads of a machine shared with other runs. The order of the sweep is fixed at
$8$ in Section~\ref{sec:numerics}.

The cover of Remark~\ref{rem:cover} applies to any contour along which
the distance between consecutive samples is bounded. An arc is accepted when the hypothesis holds at
its samples with the lower bounds reduced by half its length, so long arcs suffice where the
hypothesis holds with room to spare; choosing the contour to make that room uniform along it is
what we have not carried out, and the
circle is enough for the statements above and for the experiments.

\section*{Acknowledgements}
The author thanks W.~Tucker and F.~Poloni for helpful discussions.
The author thanks Prof.~Y.~Sato for access to computational resources. This work was supported by
the Research Institute for Mathematical Sciences, an International Joint Usage/Research Center
located in Kyoto University. The author thanks the Isaac Newton Institute for
Mathematical Sciences, Cambridge, for support and hospitality during the workshop
\emph{Operator approaches to dynamics: new connections} of the programme \emph{Operator methods for
dynamical systems}, where work on this paper was undertaken; this work was supported by EPSRC grant
EP/Z000580/1.
Claude (Anthropic) was used as an assistant for prose editing and code development.

\begin{sloppypar}
I.N.\ was partially supported by the Postgraduate Program in Mathematics at UFRJ,
CAPES--Finance Code 001, CNPq Projeto Universal No.\ 404943/2023-3,
CAPES--PRINT No.\ 88881.311616/2018-00, and CAPES--STINT No.\ 88887.155746/2017-00.
\end{sloppypar}

Any remaining errors are solely the author's responsibility.

%=======================================================================
\bibliographystyle{plain}
\FloatBarrier
\begingroup
\small
\bibliography{refs}

\begin{thebibliography}{10}

\bibitem{Bandtlow2008}
Oscar~F. Bandtlow.
\newblock Resolvent estimates for operators belonging to exponential classes.
\newblock {\em Integral Equations and Operator Theory}, 61(1):21--43, 2008.

\bibitem{BandtlowJenkinson2008}
Oscar~F. Bandtlow and Oliver Jenkinson.
\newblock Explicit eigenvalue estimates for transfer operators acting on spaces
  of holomorphic functions.
\newblock {\em Advances in Mathematics}, 218(3):902--925, 2008.

\bibitem{BauerFike1960}
F.~L. Bauer and C.~T. Fike.
\newblock Norms and exclusion theorems.
\newblock {\em Numerische Mathematik}, 2(1):137--141, 1960.

\bibitem{Behnke1991}
Henning Behnke.
\newblock The calculation of guaranteed bounds for eigenvalues using
  complementary variational principles.
\newblock {\em Computing}, 47:11--27, 1991.

\bibitem{Beyn2012}
Wolf-J\"urgen Beyn.
\newblock An integral method for solving nonlinear eigenvalue problems.
\newblock {\em Linear Algebra and its Applications}, 436(10):3839--3863, 2012.

\bibitem{BlumenthalEtAl2025}
Alex Blumenthal, Isaia Nisoli, and Toby Taylor-Crush.
\newblock A pseudospectral approach to rigorous numerical estimation of
  resonances of transfer operators, 2025.
\newblock arXiv:2507.09021; accepted, Foundations of Computational Mathematics.

\bibitem{Boegli2017}
Sabine B{\"o}gli.
\newblock Convergence of sequences of linear operators and their spectra.
\newblock {\em Integral Equations and Operator Theory}, 88(4):559--599, 2017.

\bibitem{Boegli2018}
Sabine B{\"o}gli.
\newblock Local convergence of spectra and pseudospectra.
\newblock {\em Journal of Spectral Theory}, 8(3):1051--1098, 2018.

\bibitem{Brauer1947}
Alfred Brauer.
\newblock Limits for the characteristic roots of a matrix. {II}.
\newblock {\em Duke Mathematical Journal}, 14(1):21--26, 1947.

\bibitem{BujanovicKressner2021}
Zvonimir Bujanovi\'{c} and Daniel Kressner.
\newblock Norm and trace estimation with random rank-one vectors.
\newblock {\em SIAM Journal on Matrix Analysis and Applications},
  42(1):202--223, 2021.

\bibitem{CardenEmbree2012}
Russell~L. Carden and Mark Embree.
\newblock Ritz value localization for non-{Hermitian} matrices.
\newblock {\em SIAM Journal on Matrix Analysis and Applications},
  33(4):1320--1338, 2012.

\bibitem{Colbrook2024}
Matthew~J. Colbrook.
\newblock The multiverse of dynamic mode decomposition algorithms.
\newblock In {\em Numerical Analysis Meets Machine Learning}, Handbook of
  Numerical Analysis, pages 127--230. Elsevier, 2024.

\bibitem{ColbrookTownsend2024}
Matthew~J. Colbrook and Alex Townsend.
\newblock Rigorous data-driven computation of spectral properties of {K}oopman
  operators for dynamical systems.
\newblock {\em Communications on Pure and Applied Mathematics}, 77(1):221--283,
  2024.

\bibitem{DaviesPlum2004}
E.~Brian Davies and Michael Plum.
\newblock Spectral pollution.
\newblock {\em IMA Journal of Numerical Analysis}, 24(3):417--438, 2004.

\bibitem{Delattre2024}
Blaise Delattre, Quentin Barth\'{e}lemy, and Alexandre Allauzen.
\newblock Spectral norm of convolutional layers with circular and zero
  paddings, 2024.
\newblock arXiv:2402.00240.

\bibitem{Delattre2023}
Blaise Delattre, Quentin Barth\'{e}lemy, Alexandre Araujo, and Alexandre
  Allauzen.
\newblock Efficient bound of {Lipschitz} constant for convolutional layers by
  {Gram} iteration.
\newblock In {\em Proceedings of the 40th International Conference on Machine
  Learning}, volume 202 of {\em PMLR}, pages 7513--7532, 2023.
\newblock arXiv:2305.16173.

\bibitem{DeuflhardEtAl2000}
Peter Deuflhard, Wilhelm Huisinga, Alexander Fischer, and Christof Sch\"utte.
\newblock Identification of almost invariant aggregates in reversible nearly
  uncoupled {Markov} chains.
\newblock {\em Linear Algebra and its Applications}, 315:39--59, 2000.

\bibitem{DeuflhardWeber2005}
Peter Deuflhard and Marcus Weber.
\newblock Robust {Perron} cluster analysis in conformation dynamics.
\newblock {\em Linear Algebra and its Applications}, 398:161--184, 2005.

\bibitem{Dixon1983}
John~D. Dixon.
\newblock Estimating extremal eigenvalues and condition numbers of matrices.
\newblock {\em SIAM Journal on Numerical Analysis}, 20(4):812--814, 1983.

\bibitem{DjurdjevacEtAl2012}
Nata\v{s}a Djurdjevac, Marco Sarich, and Christof Sch\"utte.
\newblock Estimating the eigenvalue error of {Markov} state models.
\newblock {\em Multiscale Modeling \& Simulation}, 10(1):61--81, 2012.

\bibitem{BallArithmetic2023}
Luca Ferranti and Isaia Nisoli.
\newblock {BallArithmetic}.jl.
\newblock \url{https://github.com/JuliaBallArithmetic/BallArithmetic.jl}, 2026.
\newblock Tag \texttt{reverse-ritz-arxiv-v1}, commit \texttt{386fdbf}.

\bibitem{FroylandDellnitz2003}
Gary Froyland and Michael Dellnitz.
\newblock Detecting and locating near-optimal almost-invariant sets and cycles.
\newblock {\em SIAM Journal on Scientific Computing}, 24(6):1839--1863, 2003.

\bibitem{GalatoloEtAl2020}
Stefano Galatolo, Maurizio Monge, and Isaia Nisoli.
\newblock Existence of noise induced order, a computer aided proof.
\newblock {\em Nonlinearity}, 33(9):4237--4276, 2020.

\bibitem{GalatoloEtAl2023}
Stefano Galatolo, Maurizio Monge, Isaia Nisoli, and Federico Poloni.
\newblock A general framework for the rigorous computation of invariant
  densities and the coarse-fine strategy.
\newblock {\em Chaos, Solitons \& Fractals}, 170:113329, 2023.

\bibitem{GalatoloNisoli2014}
Stefano Galatolo and Isaia Nisoli.
\newblock An elementary approach to rigorous approximation of invariant
  measures.
\newblock {\em SIAM Journal on Applied Dynamical Systems}, 13(2):958--985,
  2014.

\bibitem{HMT2011}
Nathan Halko, Per-Gunnar Martinsson, and Joel~A. Tropp.
\newblock Finding structure with randomness: probabilistic algorithms for
  constructing approximate matrix decompositions.
\newblock {\em SIAM Review}, 53(2):217--288, 2011.

\bibitem{HerwigEtAl2025}
April Herwig, Matthew~J. Colbrook, Oliver Junge, Péter Koltai, and Julia
  Slipantschuk.
\newblock Avoiding spectral pollution for transfer operators using residuals,
  2025.
\newblock arXiv:2507.16915.

\bibitem{Higham2002}
Nicholas~J. Higham.
\newblock {\em Accuracy and Stability of Numerical Algorithms}.
\newblock Society for Industrial and Applied Mathematics, Philadelphia, second
  edition, 2002.

\bibitem{Hochstenbach2013}
Michiel~E. Hochstenbach.
\newblock Probabilistic upper bounds for the matrix two-norm.
\newblock {\em Journal of Scientific Computing}, 57(3):464--476, 2013.

\bibitem{ImakuraEtAl2020}
Akira Imakura, Keiichi Morikuni, and Akitoshi Takayasu.
\newblock Verified partial eigenvalue computations using contour integrals for
  {Hermitian} generalized eigenproblems.
\newblock {\em Journal of Computational and Applied Mathematics}, 369:112543,
  2020.

\bibitem{JiaStewart2000}
Zhongxiao Jia and G.~W. Stewart.
\newblock An analysis of the {Rayleigh--Ritz} method for approximating
  eigenspaces.
\newblock {\em Mathematics of Computation}, 70(234):637--647, 2001.

\bibitem{KahanEtAl1982}
W.~Kahan, B.~N. Parlett, and E.~Jiang.
\newblock Residual bounds on approximate eigensystems of nonnormal matrices.
\newblock {\em SIAM Journal on Numerical Analysis}, 19(3):470--484, 1982.

\bibitem{Kato1949}
Tosio Kato.
\newblock On the upper and lower bounds of eigenvalues.
\newblock {\em Journal of the Physical Society of Japan}, 4:334--339, 1949.

\bibitem{Kato1995}
Tosio Kato.
\newblock {\em Perturbation Theory for Linear Operators}.
\newblock Classics in Mathematics. Springer, Berlin, 1995.
\newblock Reprint of the corrected printing of the second edition (1980).

\bibitem{KW1992}
J.~Kuczy\'{n}ski and H.~Wo\'{z}niakowski.
\newblock Estimating the largest eigenvalue by the power and {Lanczos}
  algorithms with a random start.
\newblock {\em SIAM Journal on Matrix Analysis and Applications},
  13(4):1094--1122, 1992.

\bibitem{LangerEtAl2001}
Heinz Langer, Alexander Markus, Vladimir Matsaev, and Christiane Tretter.
\newblock A new concept for block operator matrices: the quadratic numerical
  range.
\newblock {\em Linear Algebra and its Applications}, 330:89--112, 2001.

\bibitem{Lehmann1963}
N.~J. Lehmann.
\newblock Optimale {Eigenwerteinschlie{\ss}ungen}.
\newblock {\em Numerische Mathematik}, 5:246--272, 1963.

\bibitem{LumerRosenblum1959}
G{\"u}nter Lumer and Marvin Rosenblum.
\newblock Linear operator equations.
\newblock {\em Proceedings of the American Mathematical Society}, 10(1):32--41,
  1959.

\bibitem{MatsumotoTsuda1983}
K.~Matsumoto and I.~Tsuda.
\newblock Noise-induced order.
\newblock {\em Journal of Statistical Physics}, 31(1):87--106, 1983.

\bibitem{Miyajima2014a}
Shinya Miyajima.
\newblock Fast enclosure for all eigenvalues and invariant subspaces in
  generalized eigenvalue problems.
\newblock {\em SIAM Journal on Matrix Analysis and Applications},
  35(3):1205--1225, 2014.

\bibitem{Miyajima2014}
Shinya Miyajima.
\newblock Verified bounds for all the singular values of matrix.
\newblock {\em Japan Journal of Industrial and Applied Mathematics},
  31(3):513--539, 2014.

\bibitem{Murray2026}
Toby Murray.
\newblock {Lipschitz}-based robustness certification under floating-point
  execution, 2026.
\newblock arXiv:2603.13334.

\bibitem{Naumov2025}
Alexey Naumov, Maxim Rakhuba, Denis Ryapolov, and Sergey Samsonov.
\newblock On the upper bounds for the matrix spectral norm, 2025.
\newblock arXiv:2506.15660.

\bibitem{Nisoli2026}
Isaia Nisoli.
\newblock Certified spectral approximation of transfer operators and the
  {Gauss} map, 2026.
\newblock arXiv:2602.19435.

\bibitem{NisoliData2026}
Isaia Nisoli.
\newblock Data for ``{The} reverse {Ritz} inclusion: computable bounds on the
  pseudospectra of an operator by those of a finite-rank compression''.
\newblock Harvard Dataverse, 2026.

\bibitem{RigorousPseudospectra2026}
Isaia Nisoli.
\newblock {RigorousPseudospectra}.jl.
\newblock
  \url{https://github.com/JuliaBallArithmetic/RigorousPseudospectra.jl}, 2026.
\newblock Tag \texttt{reverse-ritz-arxiv-v2}, commit \texttt{c1c6580}.

\bibitem{RigorousInvariantMeasures2024}
Isaia Nisoli and Federico Poloni.
\newblock {RigorousInvariantMeasures}.jl.
\newblock \url{https://github.com/JuliaDynamics/RigorousInvariantMeasures.jl},
  2026.
\newblock Branch \texttt{ballarithmetic-0.4}, commit \texttt{ff173e7}.

\bibitem{Nussbaum1970}
Roger~D. Nussbaum.
\newblock The radius of the essential spectrum.
\newblock {\em Duke Mathematical Journal}, 37(3):473--478, 1970.

\bibitem{Oishi2001}
Shin'ichi Oishi.
\newblock Fast enclosure of matrix eigenvalues and singular values via rounding
  mode controlled computation.
\newblock {\em Linear Algebra and its Applications}, 324:133--146, 2001.

\bibitem{Plum1991}
Michael Plum.
\newblock Bounds for eigenvalues of second-order elliptic differential
  operators.
\newblock {\em Zeitschrift f\"ur angewandte Mathematik und Physik},
  42(6):848--863, 1991.

\bibitem{Rump2011}
Siegfried~M. Rump.
\newblock Verified bounds for singular values, in particular for the spectral
  norm of a matrix and its inverse.
\newblock {\em BIT Numerical Mathematics}, 51(2):367--384, 2011.

\bibitem{Rump2022a}
Siegfried~M. Rump.
\newblock Verified error bounds for all eigenvalues and eigenvectors of a
  matrix.
\newblock {\em SIAM Journal on Matrix Analysis and Applications},
  43(4):1736--1754, 2022.

\bibitem{RumpLange2023}
Siegfried~M. Rump and Marko Lange.
\newblock Fast computation of error bounds for all eigenpairs of a {Hermitian}
  and all singular pairs of a rectangular matrix with emphasis on eigen- and
  singular value clusters.
\newblock {\em Journal of Computational and Applied Mathematics}, 434:115332,
  2023.

\bibitem{Saad2011}
Yousef Saad.
\newblock {\em Numerical Methods for Large Eigenvalue Problems}.
\newblock Society for Industrial and Applied Mathematics, 2011.

\bibitem{SakuraiSugiura2003}
Tetsuya Sakurai and Hiroshi Sugiura.
\newblock A projection method for generalized eigenvalue problems using
  numerical integration.
\newblock {\em Journal of Computational and Applied Mathematics}, 159:119--128,
  2003.

\bibitem{SarichEtAl2010}
Marco Sarich, Frank No\'e, and Christof Sch\"utte.
\newblock On the approximation quality of {Markov} state models.
\newblock {\em Multiscale Modeling \& Simulation}, 8(4):1154--1177, 2010.

\bibitem{SatoNisoli2026}
Yuzuru Sato and Isaia Nisoli.
\newblock Statistical periodicity in noise-induced order from
  {Ruelle--Pollicott} resonances, 2026.
\newblock arXiv:2607.18771.

\bibitem{Stewart1971}
G.~W. Stewart.
\newblock Error bounds for approximate invariant subspaces of closed linear
  operators.
\newblock {\em SIAM Journal on Numerical Analysis}, 8(4):796--808, 1971.

\bibitem{Stewart1973}
G.~W. Stewart.
\newblock Error and perturbation bounds for subspaces associated with certain
  eigenvalue problems.
\newblock {\em SIAM Review}, 15(4):727--764, 1973.

\bibitem{StewartSun1990}
G.~W. Stewart and Ji-guang Sun.
\newblock {\em Matrix Perturbation Theory}.
\newblock Academic Press, Boston, 1990.

\bibitem{SunEtAl2021}
Xiaoming Sun, David~P. Woodruff, Guang Yang, and Jialin Zhang.
\newblock Querying a matrix through matrix-vector products.
\newblock {\em ACM Transactions on Algorithms}, 17(4):1--19, 2021.

\bibitem{Temple1928}
George Temple.
\newblock The theory of {Rayleigh's} principle as applied to continuous
  systems.
\newblock {\em Proceedings of the Royal Society A}, 119(782):276--293, 1928.

\bibitem{TeraoEtAl2025}
Takeshi Terao, Yoshitaka Watanabe, and Katsuhisa Ozaki.
\newblock Verified error bounds for the singular values of structured matrices
  with applications to computer-assisted proofs for differential equations,
  2025.
\newblock arXiv:2502.09984.

\bibitem{Tobler2025}
James Tobler, Hira~Taqdees Syeda, and Toby Murray.
\newblock A formally verified robustness certifier for neural networks.
\newblock In Ruzica Piskac and Zvonimir Rakamari\'{c}, editors, {\em Computer
  Aided Verification (CAV 2025)}, Lecture Notes in Computer Science, pages
  327--348. Springer, 2025.

\bibitem{TrefethenEmbree2005}
Lloyd~N. Trefethen and Mark Embree.
\newblock {\em Spectra and Pseudospectra: The Behavior of Nonnormal Matrices
  and Operators}.
\newblock Princeton University Press, 2005.

\bibitem{Tretter2008}
Christiane Tretter.
\newblock {\em Spectral Theory of Block Operator Matrices and Applications}.
\newblock Imperial College Press, London, 2008.

\bibitem{Tucker2011}
Warwick Tucker.
\newblock {\em Validated Numerics: A Short Introduction to Rigorous
  Computations}.
\newblock Princeton University Press, Princeton, NJ, 2011.

\bibitem{Varga2004}
Richard~S. Varga.
\newblock {\em Ger{\v s}gorin and His Circles}, volume~36 of {\em Springer
  Series in Computational Mathematics}.
\newblock Springer, Berlin, Heidelberg, 2004.

\bibitem{WilliamsEtAl2015}
Matthew~O. Williams, Ioannis~G. Kevrekidis, and Clarence~W. Rowley.
\newblock A data-driven approximation of the {K}oopman operator: Extending
  dynamic mode decomposition.
\newblock {\em Journal of Nonlinear Science}, 25(6):1307--1346, 2015.

\end{thebibliography}
\endgroup

\end{document}